\def\tr{\mathop{\text{tr}}\kern.2ex}
\def\E{{\mathbb E}}
\long\def\comment#1{}
\def\cS{{\mathcal{S}}}
\newcommand{\la}{\langle}
\newcommand{\ra}{\rangle}
\def\##1\#{\begin{align}#1\end{align}}
\def\$#1\${\begin{align*}#1\end{align*}}
\begin{document}








\title{Tensor Methods for Additive Index Models under Discordance and Heterogeneity\footnote{Authors listed in alphabetical order.   Krishnakumar Balasubramanian and Jianqing Fan are supported by the DMS-1662139 and DMS-1712591 and NIH grant 2R01-GM072611-13.}}
\author[1]{Krishnakumar Balasubramanian\thanks{kb18@princeton.edu}}
\author[1]{Jianqing Fan\thanks{jqfan@princeton.edu}}
\author[1]{Zhuoran Yang\thanks{zy6@princeton.edu}}
\affil[1]{Department of Operations Research and Financial Engineering, Princeton University}


\maketitle

\begin{abstract}
Motivated by the sampling problems and heterogeneity issues common in high-dimensional big datasets, we consider a class of discordant additive index models. We propose method of moments based procedures for estimating the indices of such discordant additive index models in both low and high-dimensional settings. Our estimators are based on factorizing certain moment tensors and are also applicable in the overcomplete setting, where the number of indices is more than the dimensionality of the datasets. Furthermore, we provide rates of convergence of our estimator in both high and low-dimensional setting. Establishing such results requires deriving tensor operator norm concentration inequalities that might be of independent interest. Finally, we provide simulation results supporting our theory. Our contributions extend the applicability of tensor methods for novel models in addition to making progress on understanding theoretical properties of such tensor methods.
\end{abstract}



\newpage

\section{Introduction} \label{sec:intro}

High-dimensional big datasets are typically collected by aggregating information from a variety of sources. Such diverse sources of data, invariably cause sampling challenges and heterogeneity issues~\citep{fan2014challenges}. Motivated by these concerns, in this work, we consider a class of additive index models (AIMs) described as follows. Given the covariate $X$,  we let
$
\mathcal{Z} = \{ Z_j=f_j( \langle X, \beta^*_j \rangle ) + \epsilon_j  \}_{j \in [k]} $
be an underlying set of latent models. 
That is, $\cZ$ is an \emph{unordered} collection of the responses of $k$ single index models (SIMs), that are unobservable. Here $\{ \epsilon_j  \}_{j \in [k]}$ are exogenous noise.  Moreover, based on $\mathcal{Z}$, the observed response $Y$ is given by  a real-valued function $h: \mathcal{Z} \to \mathbb{R}$ 
such that
\begin{align}\label{eq:discoaim}
\E (Y|X) = \E (h(\mathcal{Z})|X) =  g_1( \langle X, \beta^*_1 \rangle ) +  g_2( \langle X, \beta^*_2 \rangle ) + \ldots +   g_k ( \langle X, \beta^*_k \rangle ).
\end{align}
Here, the functions $g$ actually depend on the choice of $h$ and $f$. To avoid notational clutter, we just use $g$ to denote it. In addition, we note that the function $h$ can be a random function that is independent of $\cZ$. We call this model as discordant additive index model (DAIM), as the response $Y$ depends on the latent set $\cZ$.  We allow the model to be overcomplete, where the number of index models, $k$, is larger than the dimensionality $d$.
 Given $n$ i.i.d. observations $\{ (X^{(i)}, h(\mathcal{Z}^{(i)}))\}_{i \in [n]}$ of the DAIM in \eqref{eq:discoaim}, our goal is to  estimate the parametric components $\{ \beta^*_j\}_{j \in [k]}$.

Our estimators are based on method of moments approach and involve factorizing higher-order moment tensors. The corresponding second-order analogue, called as principal component analysis, has been leveraged widely for estimation in several statistical problems, for example, factor modeling, dimensionality reduction, estimation in mixture models and community detection. Furthermore, inferential and computational properties of such estimators for the above problems are relatively well understood. It is worth noting that establishing operator norm bounds for certain random matrices played a crucial part in deriving such results. We refer the reader to~\cite{fan2018principal} for a detailed survey of such results. In comparison, for the higher-order case, both the methodology and the theory of tensor factorization approaches are still in its infancy. In this paper, we adopt the tensor decomposition framework \citep{anandkumar2014tensor, anandkumar2014guaranteed, sun2017provable} for estimating the parameters of the DAIM in \eqref{eq:discoaim}, and also derive operator norm bounds for the decomposed moment tensors, thereby making progress towards understanding theoretical properties of higher-order decompositions and widening their applicability.

We now provide two concrete instantiations of the DAIM by appropriately defining the sampling function $h$ and motivate their applications. A canonical example of DAIM are the mixture models used to model heterogeneity~\citep{mclachlan2004finite, fan2014challenges, stadler2010l1}. In this setting, only one element of the latent $\mathcal{Z}$ is observed. That is, $h(\mathcal{Z})$ is assumed to be a random function that picks element $Z_j$ with probability $\pi_j$ where $\sum_{j \in [k]} \pi_j =1$. Hence the observation is given by $Y = h(\mathcal{Z}) = Z_j$ with probability $\pi_j$. Again, it is easy to see that the model is an instantiation of the DAIM posited in~\eqref{eq:discoaim}, with appropriately defined $g$ functions. Correspondingly in the sample setting, given $n$ i.i.d samples $X^{(i)} \in \mathbb{R}^d$ and the matrix $Z_{ij} \in \mathbb{R}^{k \times n}$, for each column, one of the $k$ entries is observed and the probability of observing the $j$-th entry is given by $\pi_j$ independently for all columns. From a practical perspective, heterogenous data is ubiquitous. For example, in genomics and neuroscience, the occurrences of systematic biases is natural due to the data being combined from multiple sources. Similarly in financial econometrics, the sources of data available includes stocks, trading data and unstructured text from news and blog sources. In these situations, failure to acknowledge data heterogeneity leads to wrong inferences~\citep{fan2014challenges}. 
Furthermore, in practice the non-parametric component can be misspeficied and the number of components, $k$, can be greater than the dimensionality of the dataset causing more challenges for estimation.



Yet another example of the DAIM is the problem of correspondence retrieval proposed recently in~\cite{andoni2017correspondence}. Here, the correspondence between the model under consideration ($\beta_j$) and the actual responses at hand ($Z_j$) is unobserved. Instead, we observe say the average (or sum) of the responses, $ k^{-1} \sum_{j \in [k]} Z_j$. It is clear that if the sampling function is taken as $h (\mathcal{Z}) = k^{-1} \sum_{j \in [k]} Z_j= k^{-1}\sum_{j \in [k]} [f_j( \langle X, \beta^*_j \rangle ) + \epsilon_j ] $, the model is an instantiation of the DAIM posited in~\eqref{eq:discoaim}, with appropriately defined $g$ functions that depend on the corresponding $f$ functions. In the sample setting, given $n$ i.i.d.  samples $X^{(i)} \in \mathbb{R}^d$, consider the matrix $Z \in \mathbb{R}^{k \times n}$ where each $Z_{ij} = f_j(\langle X^{(i)}, \beta_j\rangle) + \epsilon_j^{(i)}$. {\color{black} Let $\{ \cZ^{(i)}\}_{i\in [n]}$ be $n$ i.i.d. sets of latent observations, where $\cZ^{(i)} = \{ Z_{ij} , j\in [k]\}$.} Then the $i^{th}$ responses $Y_i$ is given by $h(\mathcal{Z}_i) = k^{-1}\sum_{j \in [k]}  Z_{ij}$. That is, we do not observe the matrix $Z$ which has the correspondence information in the sample setting.  Instead, we observe discordant observations of the form $h(\mathcal{Z}^{(i)})$, which obscures the correspondence information. From a practical point of view, such a lack of correspondence occurs in several situations. For example in high-dimensional nonlinear multi-task learning models~\citep{yang2009heterogeneous}, due to privacy or record linkage issues, one might not observe the correspondence between the response and the covariates of the different models. Furthermore, our models also are applicable to nonlinear compressed sensing in the discordant setting which has wide applications as described in~\citep{unnikrishnan2015unlabeled}. Similar to the previous example, model misspecification and having a large number of components, $k$, cause significant challenges for estimation.

\vspace{0.1in}
\noindent \textbf{Contributions:} Motivated by the above discussion, we focus on the task of estimating the parametric components of the DAIM, while being agnostic to the nonparametric components. First, it is worth noting that likelihood/least-squares based approaches depend on the specification of the nonparametric components and is not agnostic to them. Hence, we propose to use a higher-order moment decomposition based procedure to provably recover the parametric components with unknown nonparametric components. Under a Gaussian design assumption on the covariate and under mild regularity assumptions on the unknown nonparametric components (the specific details  are provided in \S\ref{sec:methods}), we provide polynomial-time computable estimators that achieve optimal statistical rates, in both low and high-dimensional settings, with sufficiently large number of samples. The statistical rates for our estimator are established based on obtaining novel concentration inequalities in tensor operator norm for the considered moment tensors, which might be of independent interest. Finally, from a practical point of view, our estimator for the parametric components could also be used as initializers for alternating minimization algorithms (e.g., the EM algorithm) to estimate the nonparametric components efficiently.



\vspace{0.1in}
\noindent \textbf{Notations:} We denote by $[n]$ the set of integers  $\{ 1, \ldots, n \}$. Furthermore, for a vector $u \in \mathbb{R}^d$ and an index set $F \subseteq [d]$, the truncation of $u$ with respect to the set $F$, denoted as $\vartheta_F (u)$, is defined coordinate-wise as
\begin{align*}
\left[\vartheta_F(u) \right]_i = \begin{cases} u_i & \text{if}~ i \in F\\
0 & \text{otherwise.}
\end{cases}
\end{align*}
We also use the notation $\vartheta_s(u)$ to denote the case where  $F$ consists of   the top-$s$   entries of  $u$ in absolute value. We denote an $\ell$-th order symmetric tensor by $A \in \mathbb{R}^{d\otimes \ell}$. Recall that a tensor is symmetric if $A_{j_1 j_2 \cdots j_\ell}= A_{j_{\sigma(1)} j_{\sigma(2)} \cdots j_{\sigma(\ell)}}$, for every permutation $\sigma$ of the indices. For a given vector $u \in \mathbb{R}^d$, we define the $\ell$-th order  rank-1 tensor formed from $u$ as $ u^{\otimes \ell}$. Similarly, for a given set of $k$ vectors $u^{(1)}, \ldots, u^{(\ell)} \in \mathbb{R}^d$, the rank-1 tensor formed by taking the outer product of them is given by $u^{(1)} \otimes u^{(2)} \otimes \cdots \otimes u^{(\ell)}$. In addition, let $A$ and $B$ be two $\ell$-th order tensors,  we define the inner product between $A$ and $B$ as
$
\la A, B \ra  =  \sum_{j_1=1}^d \cdots \sum_{j_\ell = 1}^d A_{j_1j_2\ldots j_\ell} \cdot B _{j_1 j_2 \ldots j_k }$. Furthermore, for any   $A \in \mathbb{R}^{d \otimes \ell}$ and  $p \in (0, \infty) $, we define the element-wise $\ell_p$-norm as $\|A\|_p = (\sum_{j_1, \ldots, j_\ell=1}^d | A_{j_1 j_2 \cdots j_\ell} |^p)^{1/p}$. Note that this generalizes the standard element-wise $\ell_p$-norm of a vector. 
We also denote the $\ell$-th order polynomial form of the tensor $A$ as $A(u^{(1)} , \ldots, u^{(\ell)}) = \bigl  \la A ,   u^{(1)} \otimes u^{(2)} \otimes \cdots \otimes u^{(\ell)} \bigr \ra$. Note that here $A(u^{(1)} , \ldots, u^{(\ell)})$ is a function of $\{ u^{(j) }\}_{j \in [\ell]}$. The operator norm of a tensor $A$ is then defined as
\#\label{eq:def_tensor_opernorm}
\| A \|_{\oper} = \sup\bigl \{  \bigl | A(u^{(1)} , \ldots, u^{(\ell)} ) \bigr | \colon  u^{(1)}, u^{(2)}, \ldots, u^{(\ell)} \in \mathbb{S}^{d-1} \bigr \},
\#
where $\mathbb{S}^{d-1} = \{ u \in \RR^d \colon \| u \|_2 = 1 \}$ is the unit sphere in $\RR^d$. Note that when the tensor is symmetric, the operator norm can alternatively be calculated as
$\| A \|_{\oper} = \sup\bigl \{  \bigl | A(u , \ldots, u ) \bigr | \colon  u \in \mathbb{S}^{d-1} \bigr \}$. Note that when $\ell=2$, we recover the matrix operator norm. We also define the sparse symmetric tensor operator norm, for some $1\leq r \leq d$, as
\#\label{eq:def_sparse_tensor_opernorm}
\| A \|_{\oper,r} = \sup\bigl \{  \bigl | A(u , \ldots, u ) \bigr | \colon  u \in \mathbb{S}^{d-1} ~\text{and}~ \| u \|_0 \leq r\bigr \}.
\#
For a $(\ell-1)$-th order symmetric tensor-valued function $A_{\ell-1}(a): \mathbb{R}^d \mapsto \mathbb{R}^{d \otimes (\ell-1)}$, its derivative is given by an $\ell$-th order symmetric tensor $\nabla_a A_{\ell-1}(a) $ which is defined entry wise as $[\nabla_a A_{\ell-1}(a)]_{i_1,\ldots,i_{\ell}} = \partial [A_{\ell-1}(a)]_{i_1,\ldots,i_{(\ell-1)}} / \partial a_{i_\ell}$. Finally, for a function $f(a) : \mathbb{R}^d \to \mathbb{R}$, its $\ell$-th derivative is denoted as $f^{(\ell)} (a) $. We end this section by describing the format of the paper. In \S\ref{sec:models}, we precisely define the class of DAIM that we consider and outline our estimator which is based on decomposing higher-order moment tensors. In \S\ref{sec:theory}, we present our main results regarding the rate of convergence of our estimators in both low and high-dimensional setting, that involve obtaining tensor operator norm concentration results. The proofs are relegated to \S\ref{sec:aux}.







\section{Model Definition and Estimation}\label{sec:models}
We now introduce the precise definitions of the models that we consider in this work and outline our  moment-based estimation procedure. As discussed in \S\ref{sec:intro}, our primary motivation for the DAIM in \eqref{eq:discoaim} is based on handling discordance and heterogeneity in  the high-dimensional big data settings. In the following, we define  two instantiations of the DAIM and propose the  corresponding estimation procedures.  We first introduce the discordant single index models, which is a special case of  DAIM under discordance.

\begin{definition}[Discordant SIMs] \label{def:sim0}
 We assume that there are $k$ unordered latent single index models denoted by $\mathcal{Z} = \{Z_j = f_j (\la \beta_j^*, X \ra, \epsilon_j ) \}_{j \in [k]}$. Here   $X \in \RR^d$ is the   covariate,  $\{ \epsilon_j \} _{j \in [k]}\subseteq \RR$ are the random noise   independent of $X$, and $\{ f_j \colon\RR^2 \rightarrow \RR \}_{ j \in [k]}$ are unknown link functions. Based on $\cZ$,  we observe the response variable $Y=h_1(\mathcal{Z}) = k^{-1}\sum_{j \in [k]}Z_j$. 


 \end{definition}

In this model, the set of responses $\mathcal{Z}$ is latent and the   response  observed is given by the average function $h_1(\mathcal{Z})$. In addition, since the norm of $\beta^{*}_j$ can be absorbed in the unknown function $f_j$, $\beta_j^*$  is not identifiable. Hence, we assume that $\beta^{*}_j$ has   norm one   for all $j \in [k]$, and focus  on estimating them while being agnostic to the nonparametric components $\{f_j\}_{j\in[k]}$. Furthermore, we assume that the number of latent single index models, $k$, can be larger than the dimensionality $d$, which yields an overcomplete   model. A more precise characterization of  overcompleteness is provided in \S\ref{sec:theory}. Finally, the average function could also be changed to other additive functions for generality. We next define the mixture of single index models, yet another special case of DAIM, to deal with heterogeneity.

  \begin{definition}[Mixture of SIMs] \label{def:sim}
Similar to Definition \ref{def:sim0}, let  $\mathcal{Z} = \{Z_j = f_j (\la \theta_j^*, x \ra, \epsilon_j ) \}_{j \in [k]}$ be the unordered  responses of the $k$ latent single index models.
  In addition, we  assume that  $\tilde{Z} \in [k]$  is a discrete random variable such that  $\PP( \tilde{Z} = j | X = x) = \pi_j $ for any $j \in [k]$. 
  Here we assume that $\sum_{j\in[k]} \pi_j = 1$. Based on $\cZ$ and $\tilde Z$, the  response variable  is $Y=h_2(\mathcal{Z}) = Z_{\tilde{Z}}$.
\end{definition}

 We note that   here $h_2 \colon \cZ \rightarrow \RR$ is a random function.  Similar to the previous case, the components of the mixture model, $\theta_j^*$ are assumed to be normalized and the number of components can be larger than the dimensionality.
 This model can be slightly  generalized  to allow the  mixing proportion  $\PP( Z = j | X = x)$  to be a function of  $\la X, \theta_j^* \ra $. That is, $\PP( Z = j | X = x)= \pi_j(\langle\theta_j^*, x\rangle)$, where   $\pi_j$ is a univariate function. Such a model is more general and has close relationships to the mixture of experts model~\citep{jacobs1991adaptive} and to modal regression~\citep{chen2016nonparametric}. Although we do not concentrate explicitly on this case, our method and the theoretical results could be easily extended to such a general setting.



\subsection{Estimation  via Third-Order Tensor}\label{sec:methods}
We now outline our procedure for estimating the indices ($\{ \beta^* _{j\in [k]} $ and $\{ \theta^*_j\}_{j \in [k]} $ ) for both models. As mentioned in \S\ref{sec:intro}, our estimation procedure is based on decomposing a moment tensor and is \emph{agnostic} to the nonparametric components $\{f_j\}_{j \in [k]}$. Specifically, our method is based on the higher-order score tensors, which are defined as follows.

\begin{definition}[Higher-Order Score Tensors~\citep{janzamin2014score}]
Let $p\colon \RR^d \rightarrow \RR$   be  the probability  density  function of $N(0, I_d)$. For each positive integer $\ell$, we define the $\ell$-th order score function $S_\ell \colon \RR^d \rightarrow \RR^{d \otimes \ell}$ recursively by letting
\#\label{eq:def_score}
S_\ell(x) = - S_{\ell-1}(x) \otimes \nabla \log p(x) + \nabla S_{\ell-1} (x),\qquad S_1 (x) = -  \nabla \log p(x),
\#
\end{definition}

By this definition, note that the first-order score vector  and the  second-order score matrix  are  $S_1(x) = x$ and    $S_2 (x) =    I_d -  x x^\top $, respectively. Then by  \eqref{eq:def_score}, the  third-order score tensor~is  \#\label{eq:3rd_score}
S_3(x) = x \otimes x \otimes x - \sum _{j=1} ^d ( x \otimes e_j \otimes e_j + e_j \otimes x \otimes e_j + e_j \otimes e_j \otimes x),
\#
where $ \{ e_j \}_{j  \in [d]} \subseteq  \mathbb{R}^d$  are the standard basis for $\mathbb{R}^d$.
Based on  the score tensors, we now introduce the  moment tensors, whose decompositions reveal the  parameters of interest.

\begin{lemma}\label{lemma:moments}
For any $j \in [k]$, let $f_j \colon \RR^2 \rightarrow \RR$ be the link function in Definition \ref{def:sim0} or Definition \ref{def:sim}, and let $\epsilon_j$ be the exogenous random noise. We define 
$\varphi_j(u) = \EE_{\epsilon_j} [  f_j(u, \epsilon_j) ]$,  where the expectation is taken with respect to the randomness in $\epsilon_j$. We also define 
$\gamma_j^*  = \EE_{\xi\sim N(0,1)}[\varphi_j^{(\ell)}( \xi)]$, for any $j \in [k]$ and any positive integer $\ell$.
Then under assumption that $X$ is a standard Gaussian vector,  we have
\begin{align}
  \EE \bigl [ h_1(\mathcal{Z}) \cdot S_\ell(X) \bigr ]  = \frac{1}{k} \sum_{j=1}^k  \gamma_j^* \cdot {\beta_j^*}^{\otimes \ell},   \qquad \EE \bigl [h_2(\mathcal{Z}) \cdot S_\ell(X) \bigr ]  = \sum_{j=1}^k  \pi_j \cdot \gamma_j^* \cdot {\theta_j^*}^{\otimes \ell}. \label{cubemomenttensor}
\end{align}
\end{lemma}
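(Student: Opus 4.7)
The plan is to reduce both identities to the higher-order Stein identity associated with the Gaussian score tensor, which is the defining property of $S_\ell$ in Janzamin et al.~(2014): for any sufficiently smooth $g\colon \RR^d \to \RR$ with mild growth conditions,
\[
\EE\bigl[g(X) \cdot S_\ell(X)\bigr] \;=\; \EE\bigl[\nabla^{\ell} g(X)\bigr],\qquad X \sim N(0,I_d).
\]
This identity follows from repeated integration by parts against the Gaussian density and the recursive definition \eqref{eq:def_score}, so I would cite it and invoke it as a black box.

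I would first handle the discordant-SIM case. Using the tower property and the independence of $\epsilon_j$ from $X$,
\[
\EE\bigl[h_1(\cZ)\cdot S_\ell(X)\bigr]
= \EE\bigl[\EE[h_1(\cZ)\,|\,X] \cdot S_\ell(X)\bigr]
= \frac{1}{k}\sum_{j=1}^{k} \EE\bigl[\varphi_j(\la \beta_j^*, X\ra)\cdot S_\ell(X)\bigr],
\]
where I use that $\EE[f_j(\la\beta_j^*,X\ra,\epsilon_j)\,|\,X] = \varphi_j(\la\beta_j^*,X\ra)$ by definition of $\varphi_j$. Applying Stein's identity to $g(x) = \varphi_j(\la\beta_j^*, x\ra)$, the chain rule gives
\[
\nabla^{\ell}\bigl[\varphi_j(\la \beta_j^*, x\ra)\bigr] \;=\; \varphi_j^{(\ell)}(\la \beta_j^*, x\ra)\cdot (\beta_j^*)^{\otimes \ell},
\]
so taking expectations and using that $\la \beta_j^*, X\ra \sim N(0,1)$ (since $\|\beta_j^*\|_2 = 1$ and $X \sim N(0,I_d)$) yields
\[
\EE\bigl[\varphi_j(\la \beta_j^*, X\ra)\cdot S_\ell(X)\bigr]
= \EE_{\xi\sim N(0,1)}\bigl[\varphi_j^{(\ell)}(\xi)\bigr]\cdot (\beta_j^*)^{\otimes \ell}
= \gamma_j^*\cdot (\beta_j^*)^{\otimes \ell}.
\]
Summing over $j$ with the factor $1/k$ delivers the first identity.

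The mixture-of-SIMs case is entirely analogous: conditional on $X$, the random index $\tilde Z$ satisfies $\PP(\tilde Z = j\,|\,X) = \pi_j$ and is independent of $\{\epsilon_j\}$, so
\[
\EE[h_2(\cZ)\,|\,X] \;=\; \sum_{j=1}^k \pi_j \cdot \varphi_j(\la \theta_j^*, X\ra).
\]
Repeating the Stein and chain-rule step componentwise and using $\|\theta_j^*\|_2=1$ gives the second identity with the $\pi_j$ weights.

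The only real obstacle is making the Stein identity rigorous, since it implicitly requires enough regularity/integrability on $\varphi_j$ for the boundary terms in integration by parts to vanish and for $\EE[\varphi_j^{(\ell)}(\xi)]$ to be finite. I would address this by stating a standard mild regularity assumption on $f_j$ (e.g., $\varphi_j$ is $\ell$-times differentiable with derivatives of at most polynomial growth), which is consistent with the assumptions used later in \S\ref{sec:theory} to control the tensor operator norm; under such assumptions the higher-order Stein identity from Janzamin et al.~(2014) applies directly and the remaining computation is mechanical.
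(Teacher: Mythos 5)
Your proof is correct and follows exactly the route the paper intends: the paper's own proof is the one-line remark that the lemma ``follows by a straightforward application of the higher-order Stein's identity,'' and your argument simply spells out the tower property, the chain rule giving $\nabla^\ell[\varphi_j(\langle\beta_j^*,x\rangle)] = \varphi_j^{(\ell)}(\langle\beta_j^*,x\rangle)\cdot(\beta_j^*)^{\otimes\ell}$, and the reduction to $\langle\beta_j^*,X\rangle\sim N(0,1)$ via $\|\beta_j^*\|_2=1$. The regularity caveat you flag is genuine but is standard and implicit in the paper's assumptions, so nothing further is needed.
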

\begin{proof}
This lemma follows by a straightforward application of the higher-order Stein's identity~\citep{janzamin2014score}.
\end{proof}

This  lemma suggests that  the parameters can be recovered by decomposing the sample versions of the  moment tensors in \eqref{cubemomenttensor}.
 The main reason for considering the higher-order moment tensors ($\ell \geq 3$), as opposed to second-order moment matrices ($\ell = 2$)  is that tensor decomposition is unique up to permutation and scaling~\citep{landsberg2011tensors}.
This allows us  to estimate the parameters  themselves as opposed to the subspace spanned by the parametric components. Furthermore,  tensor decomposition also allows one to work in the overcomplete setting ($k > d$). As mentioned in \S\ref{sec:intro}, this is particularly relevant for performing mixture of regression in big data settings, where the number of sub-populations in a dataset is typically large. While in theory, one can consider arbitrarily higher-order tensors, in the sample setting, they are notoriously  harder to estimate without relying on stringent model assumptions.  In this work, we consider specifically the case of $\ell=3$ and provide a detailed characterization of the rates of convergence for the proposed estimators. We discuss more about the theoretical results in case of $\ell >3$ as well as their pros and cons in Remark~\ref{rem:higherell}. 
Based on the above discussion, given $n$ samples $\{ (X^{(i)}, h(\mathcal{Z}^{(i)}))\}$, $i \in [n]$,  we can estimate the moment tensor in~\eqref{cubemomenttensor} by  
\#
\hat M_1 &= \frac{1}{n} \sum _{i=1}^n  h_1(\mathcal{Z}^{(i)}) \cdot S_3(X^{(i)}) = \frac{1}{n} \sum _{i=1}^n  \Big( \frac{1}{k} \sum_{j \in [k]} Z_j^{(i)} \Big) \cdot S_3(X^{(i)}),\label{eq:moment_tensor1}
\\
\hat M_2 &= \frac{1}{n} \sum _{i=1}^n  h_2(\mathcal{Z}^{(i)}) \cdot S_3(X^{(i)}) = \frac{1}{n} \sum _{i=1}^n  Y^{(i)} \cdot S_3(X^{(i)}). \label{eq:moment_tensor2}
\#
where $S_3 $ is the third order score function defined in \eqref{eq:3rd_score}. In what follows, we abuse our notation slightly and use $\hat{M}$ to denote both $\hat{M}_1$ and $\hat{M} _2$ whenever there is no confusion. Our estimators for $\{ \beta^*_j\}_{j\in [k]}$ and $\{  \theta_j^*\} _{j \in [k]} $ are then obtained by applying the tensor decomposition algorithms, described in \S\ref{sec:tpm} next, to  $\hat M_1$ and $\hat M_2$ respectively.


\subsection{Tensor Power Methods}\label{sec:tpm}
Now we construct estimators of the parametric components by applying the  tensor  power methods
 \citep{anandkumar2014tensor, anandkumar2014guaranteed, sun2017provable} to the moment tensor $\hat M$ under both the low and high-dimensional settings.

{\noindent \bf Low-Dimensional Setting.}
We apply the regular tensor power method to $\hat M$
in the low-dimensional setting where $n \geq d$, which is  an extension of the standard matrix power method  to tensors.
To simplify the notation, for any two vectors $u,v \in \RR^d$ and any third-order tensor $A \in \mathbb{R}^{d \otimes 3}$, we  denote the tensor-vector product between $A$ and $u, v$ by $A(I, u, v) \in \RR^d$, whose entries are specified by
\begin{align*}
[ A(I, u, v) ] _{j_1} = \sum_{j_2=1}^d \sum_{j_3=1}^d A_{j _1 j_2j_3} u_{j_2} v_{j_3}, \qquad j_1 \in [d].
\end{align*}
With this notation, the tensor power method is presented in Algorithm~\ref{alg:ocpower}. While the main intuition behind the tensor power method is similar to the matrix power method, there are delicate issues that arise solely for tensors. The main issue is that  perturbation results similar to the famous Davis-Kahn theorem for matrices  do not exists in the tensor setting. Recently, \cite{anandkumar2014tensor, anandkumar2014guaranteed}  establish the local and global convergence properties   of this algorithm. We leverage their results to establish statistical rates of convergence for our setting.

\begin{algorithm}[t]
\begin{algorithmic}
\STATE \textbf{Input:} Moment tensor $\hat{M} \in \mathbb{R}^{d \times d \times d}$, number of initializations $L$, number of iterations $N$.
\FOR {$\tau = 1, \ldots L$}
\STATE \textbf{Initialize} using unit vectors $\hat v^{(0)}_\tau$
\FOR {$t=0, \ldots, N-1$}
\STATE Perform power iteration
\begin{align}\label{tpiter}
\hat v^{(t)}_\tau = \frac{\hat M (I, \hat v^{(t-1)}_\tau, \hat v^{(t-1)}_\tau)}{\|\hat M (I, \hat v^{(t-1)}_\tau, \hat v^{(t-1)}_\tau) \|_2}
\end{align}
\ENDFOR
\ENDFOR 
\STATE Cluster the  $\{  \hat v^{(N)}_\tau\}_{\tau \in [L] }$ into $k$ clusters using the method in Algorithm~\ref{alg:clustep}
\STATE \textbf{Output:} The $k$ cluster centroids as $ \{ \hat \beta_j \}_{ j \in [k]} $ when $\hat M = \hat M_1$ and as $\{ \hat \theta_j \} _{ j \in [k]} $ when $\hat M = \hat M_2$
 \end{algorithmic}
\caption{Power Method for Overcomplete Tensor Decomposition}
\label{alg:ocpower}
\end{algorithm}


{\noindent \bf High-Dimensional Setting.}
Since the estimation error   using the regular tensor power method depends polynomially on the dimensionality, such a method is not applicable to the high-dimensional setting, where $n < d$ and the parametric components  are sparse.
To remedy this issue, we apply the truncated  tensor power method to $\hat M$ so as to leverage the sparsity, which is  analyzed in~\cite{sun2017provable}.
 Specifically, in each iteration, after a standard power iteration, we first  truncate the vector based on the top $ \bar s$ absolute  values of the current iterate, and then normalize the truncated iterate. This modification is given formally in  \eqref{ttpiter}. The overall algorithm  is presented for completeness in Algorithm~\ref{alg:octpower}.

\begin{algorithm}[t]
\begin{algorithmic}
\STATE \textbf{Input:} Moment tensor $\hat{M} \in \mathbb{R}^{d \times d \times d}$, number of initializations $L$, number of iterations $N$, rank $k$, and   sparsity  $\bar s$.
\FOR {$\tau = 1, \ldots L$}
\STATE \textbf{Initialize} using unit vectors $\hat v^{(0)}_\tau$
\FOR {$t=0, \ldots, N-1$}
{\STATE Perform a power iteration step
\$
\tilde v^{(t)}_\tau &= \frac{\hat M (I, \hat v^{(t-1)}_\tau, \hat v^{(t-1)}_\tau)}{\|\hat M (I, \hat v^{(t-1)}_\tau, \hat v^{(t-1)}_\tau) \|_2}
\$}
{\STATE Apply truncation to $ \tilde v^{(t)}_\tau$ and then do normalization
\begin{align}\label{ttpiter}
\hat v^{(t)}_\tau & = \frac{\vartheta_{\bar{s}}(\tilde v^{(t)}_\tau)} {\| \vartheta_{\bar s}(\tilde v^{(t)}_\tau)\|_2}
\end{align}}
\ENDFOR
\ENDFOR
\STATE Cluster the   $\{  \hat v^{(N)}_\tau\}_{ \tau \in [L] }$ into $k$ clusters using the method in Algorithm~\ref{alg:clustep}
\STATE \textbf{Output:} The $k$ cluster centroids as $\{ \hat \beta_j\}_{ j \in [k]}$ when $\hat M = \hat M_1$ and as $\{ \hat \theta_j\} _{ j \in [k]}$ when $\hat M = \hat M_2$
 \end{algorithmic}
\caption{Truncated Power Method for Overcomplete Sparse Tensor Decomposition}
\label{alg:octpower}
\end{algorithm}

Note that  the tensor power methods described in Algorithms~\ref{alg:ocpower} and \ref{alg:octpower}  involve post-processing in the form of clustering. Furthermore, the success of the algorithms also hinges  on the quality of  initialization, since  the objective function of tensor decomposition is nonconvex. We now discuss about these two crucial steps needed for the recovery of parametric components.

\noindent\textbf{Clustering.} Notice  that in both Algorithms~\ref{alg:ocpower} and \ref{alg:octpower}, the final step is a clustering procedure of the $L$ solution vectors $\{ \hat v_\tau\}_{ \tau \in [L] }$ (we drop the superscript based on $N$ here to avoid notational clutter). The main idea behind this clustering step is to estimate the $k$ components based on using the most correlated vectors from $\{ \hat v_\tau\}_{ \tau \in [L] }$  as initialization in the power method. The clustering procedure is outlined in Algorithm~\ref{alg:clustep}.

\begin{algorithm}[t]
\begin{algorithmic}
\STATE \textbf{Input:} $\hat{M} \in \mathbb{R}^{d \times d \times d}$, the set $\mathcal{S} = \{  \hat v_\tau, \tau \in [L] \}$.
\FOR {$j= 1, \ldots, k$}
\STATE Find $\hat v = \underset{v \in \mathcal{S}}{\argmax} | \hat M \odot (v \otimes  v \otimes v) |$ 
\STATE Perform $N$ iterates of  \eqref{tpiter} (resp.  \eqref{tpiter} and  \eqref{ttpiter}) for low-dimensional setting (resp. high-dimensional setting) with $\hat v$ as the initial point. Denote the final update as $\hat v_j$.
\STATE Remove all $\hat v_\tau \in \mathcal{S}$ such that $\| \hat v_\tau \pm \hat v \|_2 \leq 0.5$
 \ENDFOR
\STATE \textbf{Output:} The $k$ cluster centroids $\{\hat v_j\}_{j \in [k]}$.
 \end{algorithmic}
\caption{Clustering Step for (Truncated) Tensor Power Method}
\label{alg:clustep}
\end{algorithm}

\noindent \textbf{Initialization.} Obtaining a good initial point, satisfying the condition of required for the theoretical results in \S\ref{sec:theory} is a challenging task. In theory, provably good initialization could be obtaining based on unfolding and singular value decompositions when $k \leq c d$ where $c$ is an arbitrary constant. In practice, it has been observed in several works~\citep{anandkumar2014tensor, sun2017provable} that random initialization works well even in the overcomplete setting. However,  obtaining a theoretical statement quantifying this observation has remained elusive so far. We note that, even with random initialization, the number of initialization $L$ needs to be set for both Algorithm~\ref{alg:ocpower} and~\ref{alg:octpower}.







\section{Main Results}\label{sec:theory}

In this section, we state our main result for estimating the parameters $\{ \beta_j ^*\}_{ j\in [k]}$   and  $\{ \theta_j^* \}_{j \in [k]} $ of the two models in Definitions~\ref{def:sim0} and~\ref{def:sim} respectively. Our proofs consists of two parts. We first leverage a deterministic results (i.e., results deterministic up to randomness in the algorithm's initialization) concerning the convergence of tensor power method (resp. truncated tensor power method) from~\cite{anandkumar2014guaranteed} (resp. from~\cite{sun2017provable}). Specifically, for the low-dimensional case,
such a deterministic convergence result relates the statistical performances of  the estimators constructed by Algorithm \ref{alg:ocpower} to $\| \hat M - \EE [ Y \cdot S_3(X)] \|_{\oper}$, where $\hat M = \hat M_1$ and $Y = h_1 (\cZ)$ for the discordant SIMs, and $\hat M = \hat M_2$ and $Y = h_2 (\cZ)$ for mixture of SIMs. Similarly, for the high-dimensional setting, the deterministic result in \cite{sun2017provable} bounds the statistical error of the estimators in Algorithm \ref{alg:octpower} by $\| \hat M - \EE [ Y \cdot S_3(X)] \|_{\oper,r}$.  Our major contribution in this work is obtaining high-probability concentration bounds for both $\| \hat M - \EE [ Y \cdot S_3(X)] \|_{\oper}$ and $\| \hat M - \EE [ Y \cdot S_3(X)] \|_{\oper,r}$, which might be of interest to other models estimated using method-of moments. Compared to the matrix concentration results, obtaining concentration results for higher-order tensors are significantly challenging. The main difficulty is obtaining sharp concentration bounds for certain polynomial functions of random variables, that enables one to leverage the $\epsilon$-netting combined with union bound argument. We refer the reader to the proofs in \S\ref{sec:aux} for the details. Before we state and discuss our main results, we first outline the assumptions we make in this work, which can be classified into two types that   correspond to the probabilistic and deterministic parts of our proof. While the probabilistic assumptions are the same for both the low- and high-dimensional cases, the deterministic assumptions on the parameters vary for the low and high-dimensional settings.

\begin{assumption}[Probabilistic Assumptions] \label{assume:moments}
For the discordant SIMs in Definition \ref{def:sim0} and mixture SIMs in Definition \ref{def:sim},  we assume that the following   conditions are satisfied.
\begin{itemize}
\item[1.1] \textrm{Noise Assumption.} The noise $\epsilon_1, \ldots, \epsilon_k$ are such that the response  $Y$ is a sub-exponential random variable with $\| Y \|_{\psi_1} \leq \Psi$, where $Y = h_1(\cZ)$ for  discordant SIMs and $Y = h_2( \cZ) $ for mixture of SIMs.
\item[1.2] \textrm{Covariate Assumption.} The covariate $X\sim N(0,I_d)$ is a Gaussian random vector.
\item [1.3] \textrm{Regularity Assumption.} Recall that we define $\{\gamma _j^* \}_{j \in [k]}$ in Lemma \ref{lemma:moments}. For discordant SIMs, we assume that there exists $\gamma _{\max} , \gamma _{\min} >0$ such that $ \{ \gamma^* _j  / k \} _{j \in [k]} \subseteq [ \gamma_{\min}, \gamma_{\max} ] $. In addition, for mixture SIMs, we assume that $ \{ \gamma^* _j \cdot \pi_j \} _{j \in [k]} \subseteq [ \gamma_{\min}, \gamma_{\max} ] $.
\end{itemize}
\end{assumption}
The assumption that $Y$ is sub-exponential is a substantially weaker condition allowing for potentially heavy-tailed noise. Relaxing this assumption would incur a significant loss in the rates of convergence of $ \| \hat M - \EE [ Y \cdot S_3(X)] \|_{\oper}$ and $ \| \hat M - \EE [ Y \cdot S_3(X)] \|_{\oper,r}$, which consequently leads to slower rates of convergence for estimating the parameters $\{ \beta_j ^*\}_{ j\in [k]}$   and  $\{ \theta_j^* \}_{j \in [k]} $. The assumption that  $X$ has  i.i.d Gaussian entries could be relaxed to the case of $X \in N(0,\Sigma)$ with a well-conditioned $\Sigma$. Such an assumption is standard in several works on estimating functionals of covariance matrices; see for example~\citep{cai2016estimating}. We do not explicitly concentrate on the relaxed assumption so as to highlight the main message of the paper in a simpler setting. Relaxing the assumption on $X$ further to non-Gaussian distributions  is rather delicate, which   is further  discussed  in \S\ref{sec:ending}. Roughly in this setting, either more structure should be enforced on the parameters,  or more information about the density of $X$ must be known. We now state our results for the low and high-dimensional setting in Subsections~\ref{ref:ldmain} and~\ref{ref:hdmain} respectively.


\subsection{Low-dimensional Results}\label{ref:ldmain}

We first characterize the statistical rate of convergence for $\hat\beta_j$ and $\hat \theta_j$ in the low-dimensional setting. As mentioned previously, in order to obtain the estimation rates stated in Theorem~\ref{thm:ldrates}, we require concentration bounds on $\| \hat M_1 - \EE [ h_1(\mathcal{Z}) \cdot S_3(X)] \|_{\oper}$ and $\| \hat M_2 - \EE [ h_2(\mathcal{Z}) \cdot S_3(X)] \|_{\oper}$. We state the result below.
\begin{theorem} \label{thm:tensor_concentration}
Note that we define tensors   $\hat M_1$ and $\hat M_2$   in \eqref{eq:moment_tensor1} and \eqref{eq:moment_tensor2}, respectively.   Under Assumption \ref{assume:moments}, when  $n \cdot  d$ is   sufficiently large,
with probability at least $1 -\exp(-2d)$,  we have
 \#\label{eq:tensor_concentration}
\bigl \| \hat M_1 - \EE [ h_1(\mathcal{Z}) \cdot S_3(X)]  \bigr \|_{\oper} \leq  K \max  \Bigg( \sqrt{\frac{d}{n}},  \frac{d^{5/2}}{n}  \Bigg)
 \#
 where $K$ is an absolute constants. The same bound holds for  $\bigl \| \hat M_2 - \EE [ h_2(\mathcal{Z}) \cdot S_3(X)] \bigr \|_{\oper}$.  
\end{theorem}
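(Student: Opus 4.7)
My plan is a standard $\epsilon$-net argument combined with a sub-Weibull Bernstein inequality for the scalar summands. Set $A_n := \hat M_1 - \mathbb{E}[h_1(\mathcal{Z}) \cdot S_3(X)]$; this is a symmetric third-order random tensor since $S_3(X)$ is symmetric. For any symmetric third-order tensor $A$, a multilinear expansion of $A(u,u,u) - A(u',u',u')$ for $\|u - u'\|_2 \le \epsilon$ gives
\[
\|A\|_{\oper} \;=\; \sup_{u \in \mathbb{S}^{d-1}} |A(u,u,u)| \;\le\; (1 - 3\epsilon)^{-1} \max_{u \in \mathcal{N}_\epsilon} |A(u,u,u)|,
\]
where $\mathcal{N}_\epsilon$ is an $\epsilon$-net of $\mathbb{S}^{d-1}$ with $|\mathcal{N}_\epsilon| \le (3/\epsilon)^d$. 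Fixing a small absolute $\epsilon$ (say $1/6$), it suffices to control $|A_n(u,u,u)|$ for each fixed $u \in \mathcal{N}_\epsilon$ and then union bound.

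For fixed unit $u$, a direct computation using $\|u\|_2 = 1$ yields $S_3(X)(u,u,u) = \langle u, X\rangle^3 - 3\langle u, X\rangle$, i.e.\ the third Hermite polynomial in the standard Gaussian $\langle u, X\rangle$. Thus the scalar summands $W_i := Y^{(i)} \cdot \bigl(\langle u, X^{(i)}\rangle^3 - 3\langle u, X^{(i)}\rangle\bigr)$ are i.i.d., and $A_n(u,u,u) = n^{-1}\sum_{i=1}^n (W_i - \mathbb{E} W_i)$. The key moment estimate comes from Cauchy--Schwarz: since $\langle u, X\rangle \sim N(0,1)$ one has $\|\langle u, X\rangle^3 - 3 \langle u, X\rangle\|_{L^{2p}} \lesssim p^{3/2}$, while Assumption 1.1 gives $\|Y\|_{L^{2p}} \lesssim p\,\Psi$. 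Combining, $\|W_i\|_{L^p} \lesssim \Psi \cdot p^{5/2}$, which is exactly the $L^p$-growth characterizing a sub-Weibull variable of index $\alpha = 2/5$, with Orlicz norm $\|W_i\|_{\psi_{2/5}} \le K \Psi$ for an absolute constant $K$.

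The corresponding Bernstein-type tail bound for i.i.d.\ mean-zero sub-Weibull sums (as in, e.g., Kuchibhotla and Chakrabortty, 2018) takes the form
\[
\mathbb{P}\Bigl( \bigl| n^{-1} \sum_{i=1}^n (W_i - \mathbb{E} W_i) \bigr| \ge t \Bigr) \;\le\; 2 \exp\Bigl( -c \min\Bigl\{ \tfrac{n t^2}{K^2 \Psi^2},\; \bigl(nt / (K\Psi)\bigr)^{2/5} \Bigr\} \Bigr).
\]
Demanding the exponent to be $\le -C d$ (to absorb $\log |\mathcal{N}_\epsilon| = O(d)$ via the union bound and leave an $\exp(-2d)$ slack) splits into two regimes: the sub-Gaussian term forces $t \gtrsim \sqrt{d/n}$, whereas the heavier-tailed term forces $t \gtrsim d^{5/2}/n$. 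Taking the maximum reproduces the rate in \eqref{eq:tensor_concentration}. The argument for $\hat M_2$ is identical, since Assumption 1.1 supplies $\|h_2(\mathcal{Z})\|_{\psi_1} \le \Psi$ as well.

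The main obstacle is correctly identifying the Orlicz index $\alpha = 2/5$ of the product $Y \cdot S_3(X)(u,u,u)$ and invoking the two-regime Bernstein inequality tailored to that heavier-than-sub-exponential tail; this is precisely what produces the two-term rate in \eqref{eq:tensor_concentration}, in contrast to the single $\sqrt{d/n}$ rate one would obtain in the matrix ($\ell=2$) case. The hypothesis that $n d$ be sufficiently large just ensures we sit well inside one of the two regimes so that all net/union-bound constants can be cleanly absorbed into the absolute constant $K$. The sparse version $\|\cdot\|_{\oper,r}$ appearing later follows from the same argument with $\mathcal{N}_\epsilon$ replaced by a net over $r$-sparse unit vectors, whose cardinality scales like $\binom{d}{r}(3/\epsilon)^r$.
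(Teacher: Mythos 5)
Your approach is correct and leads to the same rate, but it follows a genuinely different route than the paper's proof. The paper does not treat $S_3(X)(u,u,u)$ as a single Hermite polynomial; instead it decomposes $\hat M_2 = \hat T_1 + \hat T_2$ (and likewise $\hat M_1$) into the pure rank-one part $\hat T_1 = n^{-1}\sum_i Y^{(i)}\,(X^{(i)})^{\otimes 3}$ and the correction part $\hat T_2$, and bounds each operator norm separately. For each piece the scalar summand is a \emph{homogeneous} product $Y\cdot\langle u,X\rangle^m$ with $m\in\{3,1\}$, which the authors handle by a bespoke construction: split the product into positive and negative parts, take the $(m+2)$-th root times an independent Rademacher sign to manufacture a sub-Gaussian auxiliary variable $Z_i$ with $Z_i^{m+2}=A_i^{\pm}$ (using H\"older to control $\|Z_i\|_{\psi_2}$), and then apply Adamczak--Wolff's polynomial concentration to $F(Z)=\sum_i Z_i^{m+2}$ on diagonal tensors. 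This produces exactly the two-term exponent $\min\{t^2/n, t^{2/(m+2)}\}$ without any extra $\log n$ inflation in the heavy-tailed branch, which is what gives the clean $d^{5/2}/n$ term. Your route avoids the decomposition and the root trick entirely: you observe that $S_3(X)(u,u,u)=H_3(\langle u,X\rangle)$, bound $\|W_i\|_{L^p}\lesssim\Psi\,p^{5/2}$ directly by Cauchy--Schwarz, identify the $\psi_{2/5}$ index, and import a two-regime sub-Weibull Bernstein inequality. This is shorter and more transparent, and it makes the source of the $5/2$ exponent (a factor $p$ from $Y$, a factor $p^{3/2}$ from the degree-3 Gaussian polynomial) immediately visible.

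One point to be slightly careful about: generic sub-Weibull Bernstein inequalities for $\psi_{\alpha}$ with $\alpha<1$ often carry a $(\log n)^{1/\alpha}$ (or $(\log n)^{(1-\alpha)/\alpha}$) inflation in the heavy-tailed term, which would degrade the $d^{5/2}/n$ rate to include poly-logarithmic factors and would no longer match the statement with $K$ an absolute constant. The paper sidesteps this by proving its own concentration (Lemma~3) tailored to the product structure via Adamczak--Wolff, where the minimizing partitions of a degree-$5$ diagonal polynomial give exactly the two exponents $2$ and $2/5$ with no log penalty. If you want your version to be self-contained at the claimed rate, you should either verify that the particular sub-Weibull Bernstein you cite has no such log factor in this setting (several formulations do, others do not), or derive the tail bound from Latala-type moment estimates $\|\sum_i (W_i-\E W_i)\|_p\lesssim \Psi(\sqrt{np}+p^{5/2})$, which after choosing $p\asymp d$ reproduces the paper's bound directly. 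Apart from this bookkeeping issue, the logic, the $\epsilon$-net reduction, the union bound, and the treatment of $\hat M_1$ via Lemma~\ref{lemma:psi_1_norm} (showing $\|h_1(\cZ)\|_{\psi_1}\le\Psi$) all line up with the paper, and your sparse-net remark at the end matches the proof of Theorem~\ref{thm:tensor_concentration1}.
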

\noindent The above theorem, establishes concentration of  $\hat M_1 - \EE [ h_1(\mathcal{Z}) \cdot S_3(X)]$ and $\hat M_2 - \EE [ h_2(\mathcal{Z}) \cdot S_3(X)]$ in tensor operator norm.  The proof of the above theorem is involved and is deferred to the appendix. We now proceed to state the results for estimation error. In order to do so, apart from Assumption \ref{assume:moments}, we make the following deterministic assumptions on the true parameters $\{ \beta_j^*\}_{j \in [k]}$ and $\{ \theta_j^* \}_{j \in [k]}$ corresponding to the two models in \S\ref{sec:models}. In what follows, absolute constants are denoted by $C$ or $K$ indexed with a subscript. The values of the constants may change from line to line.

\begin{assumption}[Low-dimensional Deterministic Assumptions] \label{assume:parametersld}
Let  $U= [ u_1, \cdots, u_k] \in \RR^{d \times k}$ be a matrix with vectors $\{ u_j \}_{ j \in [k]} \subseteq \RR^d$  as its columns. We introduce the following two conditions on $\{ u_j \}_{ j \in [k]}$, which are assumed to be satisfied by both  $\{ \beta_j^* \}_{j \in [k]}$ and $\{ \theta_j^* \}_{j\in [k]}$.
\begin{enumerate}
\item[2.1] \text{Incoherence condition.}  There exist absolute constants $C_0$ and $C_1$ such that
 $$ \psi = \max_{i \neq j} | u_i^\top u_j|  \leq C_0/ \sqrt{d} \qquad\text{and}\qquad \| U\|_{\oper} \leq 1+ C_1\sqrt{k/ d}.$$

 \item [2.2] \textrm{Overcompleteness.} The number of SIM, $k = o(d^{3/2})$.
\end{enumerate}
\end{assumption}
The incoherence assumption is a standard condition in the literature on high-dimensional statistics literature~\citep{donoho2001uncertainty,donoho2006stable} and is particularly common for theoretical analysis of tensor decomposition~\citep{anandkumar2014tensor, anandkumar2014guaranteed, sun2017provable}. It is a relaxation of more restrictive orthogonality condition and allows for a much broader class of parameter vectors in the DAIM models we consider. Relaxing such an assumption is significantly harder and may lead to inefficient estimation rates.  Furthermore, to characterize the performance of our estimator, note that $\beta^*_j$ and $\theta_j^* $ are unidentifiable in the DAIM  since $f_j$  in \eqref{eq:discoaim} is unknown. Thus,  for
 two vectors $u_1$ and $u_2 $, we use
 \#\label{eq:sign_flip_distance}
 d(u_1, u_2 )=\min \bigl \{ \| u_1 - u_2  \|_2,  \| u_1 + u_2 \|_2 \bigr \}
 \#
 to
 measures the distance between $u_1$ and $u_2$ up to sign-flips, which is used   to evaluate the performance of the estimator. Additionally, we define the following two parameters that characterize the rates of convergence and  the requirement for initialization respectively:
\begin{align*}
\varrho ( \hat M, \gamma) & =  \frac{2\sqrt{5}}{\gamma_{\min}} ~\bigl \|  \hat M - \EE [ Y \cdot S_3(X)] \bigr\|_{\oper} + \frac{2\sqrt{5} C_1 \gamma_{\max}}{\gamma_{\min}} \sqrt{k} \psi^2,\\
\varrho_0 (\gamma) & =  \min \bigg[  \sqrt{\frac{\gamma_{\min}}{6 \gamma_{\max}}},    \frac{\gamma_{\min}  }{4 \gamma_{\max}} - \frac{C_1 \sqrt{k}}{d}, \frac{\gamma_{\min}}{4\sqrt{5} C_2 \gamma_{\max}} - \frac{2C_0}{C_2 \sqrt{d}} \bigg( 1 + C_1 \sqrt{\frac{k}{d}} \bigg)^2  \bigg].
\end{align*}
Note that in the above definition, $Y = h_1(\mathcal{Z})$ when $\hat M = \hat M_1$ and  $Y = h_2(\mathcal{Z})$ when $\hat M = \hat M_2$. With the above notation, we now state our theorem for estimation rates.

\begin{theorem}[Rates in Low-dimensions] \label{thm:ldrates}
For  discordant SIMs  in Definition~\ref{def:sim0}  and mixture SIMs in Definition \ref{def:sim}, let $\{ \hat \beta_j \}_{j \in [k]}$ and $\{ \hat \theta_j \}_{j\in [k]}$ be the estimators returned by  Algorithm~\ref{alg:ocpower} with $\hat{M}_1$ and $\hat{M}_2$ as inputs, respectively.
Under Assumptions \ref{assume:moments} and  \ref{assume:parametersld}, we assume  the number of iterations satisfy
 $N_1 \geq C_3 \log\{\gamma_{\min}/[\gamma_{\max} \cdot \varrho(\hat{M}_1, \gamma)]\}$ and $N_2 \geq C_4 \log\{\gamma_{\min}/[\gamma_{\max} \cdot \varrho(\hat{M}_2, \gamma)]\}$ for  $\hat M_1$ and $\hat M_2$ are used respectively. Then with probability tending to 1, for any $j \in [k]$, we have
\begin{align}\label{mainldrate}
 d(\hat \beta_j, \beta^*_j) \leq  \frac{2\sqrt{5}}{\gamma_{\min}}\Biggl[  K  \max  \bigg( \sqrt{\frac{d}{n}}, \frac{d^{5/2}}{ n} \bigg)\Biggr] + \frac{2\sqrt{5} C_1 \gamma_{\max}\sqrt{k} \psi^2}{\gamma_{\min}}
\end{align}
as long as the initialization $\hat \beta^{(0)}_\tau $ satisfies respectively $d(\hat \beta^{(0}_\tau, \theta^*_j) \leq \varrho_0(\gamma)$. The same results also holds for $d(\hat \theta_j, \theta^*_j)$ as long as the corresponding initialization satisfies $d(\hat \beta^{(0}_\tau, \beta^*_j) \leq \varrho_0(\gamma)$.
\end{theorem}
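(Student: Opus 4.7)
The plan is to combine two ingredients: (i) a purely deterministic convergence guarantee for the (overcomplete) tensor power method due to Anandkumar et al., and (ii) the high-probability tensor operator norm bound already recorded as Theorem~\ref{thm:tensor_concentration}. The first ingredient is a local contraction statement: if one feeds the power iteration a tensor $\hat M$ whose population version $M^{*} = \sum_{j=1}^{k} w_j \, u_j^{*\otimes 3}$ has incoherent components $\{u_j^{*}\}$ with weights in $[\gamma_{\min},\gamma_{\max}]$, and one initializes within a basin of size $\varrho_0(\gamma)$ of some $u_j^{*}$, then after $N \gtrsim \log\{\gamma_{\min}/[\gamma_{\max}\varrho(\hat M,\gamma)]\}$ iterates the output $\hat v^{(N)}$ satisfies
\[
d(\hat v^{(N)}, u_j^{*}) \;\le\; \frac{2\sqrt{5}}{\gamma_{\min}}\,\bigl\|\hat M - M^{*}\bigr\|_{\oper} \;+\; \frac{2\sqrt{5}\,C_1\gamma_{\max}}{\gamma_{\min}}\sqrt{k}\,\psi^{2},
\]
which is exactly $\varrho(\hat M,\gamma)$. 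The second ingredient turns the deterministic error $\|\hat M - M^{*}\|_{\oper}$ into the sample-size dependent bound $K\max(\sqrt{d/n},\, d^{5/2}/n)$.

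The first step is to put the population tensor into the right form. By Lemma~\ref{lemma:moments} and Assumption~1.3, for the discordant SIMs one has $M^{*}_1 := \EE[h_1(\mathcal Z)\cdot S_3(X)] = \sum_{j=1}^{k} (\gamma_j^{*}/k)\,{\beta_j^{*}}^{\otimes 3}$ with weights $\gamma_j^{*}/k \in [\gamma_{\min},\gamma_{\max}]$, and analogously $M^{*}_2 = \sum_{j} \pi_j\gamma_j^{*}\,{\theta_j^{*}}^{\otimes 3}$. Assumption~2.1 gives the incoherence $\max_{i\ne j}|\langle u_i^{*},u_j^{*}\rangle|\le C_0/\sqrt d$ and the spectral bound $\|U\|_{\oper}\le 1+C_1\sqrt{k/d}$, while Assumption~2.2 secures $k=o(d^{3/2})$, which is the overcompleteness regime under which the deterministic tensor power method guarantee applies. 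In particular, the choice of $\varrho_0(\gamma)$ in the excerpt is precisely the basin-of-attraction size that the deterministic theorem requires, written in terms of the structural constants $C_0,C_1,C_2,\gamma_{\min},\gamma_{\max}$.

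The second step is to invoke the concentration inequality. Theorem~\ref{thm:tensor_concentration} gives, on an event of probability at least $1-\exp(-2d)$,
\[
\bigl\|\hat M_i - M^{*}_i\bigr\|_{\oper} \;\le\; K\max\!\Bigl(\sqrt{d/n},\ d^{5/2}/n\Bigr),\qquad i\in\{1,2\}.
\]
On this event, the initialization hypothesis $d(\hat v^{(0)}_\tau, u_j^{*}) \le \varrho_0(\gamma)$ means the starting iterate is inside the basin of attraction; the assumed number of iterations $N_1,N_2$ suffices for the contraction to reach the noise floor; and the clustering post-processing in Algorithm~\ref{alg:clustep} simply identifies, for each $j$, one such good starting iterate among the $L$ random restarts and re-runs the power method from it. Plugging the concentration bound into $\varrho(\hat M,\gamma)$ yields the stated rate~\eqref{mainldrate} simultaneously for all $j\in[k]$ (union bound is free because the event does not depend on $j$), which completes the proof for both $\hat\beta_j$ and $\hat\theta_j$.

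The main obstacle in the argument is not the concentration bound, which is black-boxed from Theorem~\ref{thm:tensor_concentration}, but rather the careful bookkeeping that shows the deterministic overcomplete tensor power method contraction actually delivers the displayed constants $2\sqrt{5}/\gamma_{\min}$ and $2\sqrt{5}C_1\gamma_{\max}\sqrt{k}\psi^{2}/\gamma_{\min}$ under Assumption~2. Concretely, one has to verify that (a) the basin radius $\varrho_0(\gamma)$ is indeed compatible with the realized noise level so that a single power iteration strictly contracts, (b) the cross-component bias induced by non-orthogonality is controlled by the $\sqrt k\,\psi^{2}$ term using Assumption~2.1, and (c) the clustering step does not lose components provided $L$ random initializations cover each $u_j^{*}$-basin with high probability. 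These are precisely the ingredients that the cited Anandkumar et al.\ analysis supplies, and the theorem follows by assembling them.
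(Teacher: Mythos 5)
Your proposal is correct and follows essentially the same route as the paper: the paper's own proof is a two-line statement that the result follows by combining Theorem~1 of Anandkumar et al.\ (the deterministic local convergence guarantee for overcomplete tensor power iteration) with the operator-norm concentration bound of Theorem~\ref{thm:tensor_concentration}. Your write-up simply unpacks the same two ingredients — identifying $M^*_1,M^*_2$ via Lemma~\ref{lemma:moments}, checking the incoherence/overcompleteness hypotheses, invoking the deterministic contraction to obtain the error bound $\varrho(\hat M,\gamma)$, and then substituting the concentration estimate — with more explanatory detail than the paper gives.
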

\begin{proof}
The proof of the theorem follows immediately by combining the statements of Theorem 1 in ~\cite{anandkumar2014guaranteed} and part (a) of Theorem~\ref{thm:tensor_concentration} on tensor norm concentration, proved in Appendix~\ref{sec:proof_thm1}.
\end{proof}

\begin{remark}
The above theorem has two terms that characterize the rates of convergence of $\hat \beta_j$ to $\beta_j^*$ and $\hat \theta_j$ to $\theta_j^*$. The first term in \eqref{mainldrate} is essentially the error  of estimating the third-order moment tensors  in~\eqref{cubemomenttensor} using the empirical tensors in \eqref{eq:moment_tensor1} and \eqref{eq:moment_tensor2}. Moreover, such a estimation error has different behaviors in the low-sample regime ($n \leq d^4$) and in the high-sample regime ($n \geq d^4$). Specifically, in the low-sample regime, the rate is dominated by the slower $d^{5/2}/n$ term; whereas in the high-sample regime it is dominated by the faster rate  $\sqrt{d/n}$. Hence, with big data,  the statistical rate of convergence can be much faster. In addition, the second term  in \eqref{mainldrate} could be interpreted as the approximation error term, which arises from the analysis of the tensor power method for overcomplete tensor decomposition~\citep{anandkumar2014guaranteed}. The incoherence condition in Assumption~\ref{assume:parametersld} leads to the constraint that $k = o(d^{3/2})$ for consistency. This essentially controls the level of overcompleteness in the model for consistent estimation of the parameters.
\end{remark}

Finally, we note that in the context of mixture of generalized linear models,~\cite{sedghi2016provable} presented a theorem on statistical rates of convergence for a related estimator. Unfortunately, the presented rates are highly sub-optimal in comparison and no proofs are provided. 

\subsection{High Dimensional Results}\label{ref:hdmain}
Similar to the low-dimensional setting, we now present sparse tensor operator norm bounds that are required to obtain the estimation error rates in the high-dimensional setting.


\begin{theorem} \label{thm:tensor_concentration1}
Under  Assumption~\ref{assume:moments}, when $n \cdot d$ is sufficiently large, with probability at least $1 - 8 \exp[ - 3r \log (d) ] $, we have
 \#
 \bigl \| \hat M_1 - \EE [ h_1(\mathcal{Z}) \cdot S_3(X)]  \bigr \|_{\oper,r} \leq K \max \Bigg \{     \sqrt{\frac{ r \log (d )}{ n}},   \frac{[ r \log (d) ]^{5/2}}{n}  \Bigg \}
\label{eq:stopt2}
 \#
 where $r$ is a positive integer (typically much less than $d$) and $K$ is an absolute constant. The same bound also holds for $\bigl \| \hat M_2 - \EE [ h_2(\mathcal{Z}) \cdot S_3(X)] \bigr \|_{\oper,r}$.
\end{theorem}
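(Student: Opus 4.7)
The plan is to mirror the proof template of Theorem \ref{thm:tensor_concentration}, replacing the $(1/6)$-net of the full sphere $\mathbb{S}^{d-1}$ (which has size $\exp(O(d))$) by a $(1/6)$-net of the \emph{sparse} sphere $\{u\in \mathbb{S}^{d-1}:\|u\|_{0}\le r\}$ (which has size $\exp(O(r\log d))$). Pointwise concentration is literally the same bound used in the proof of Theorem \ref{thm:tensor_concentration}; only the covering cardinality and the union-bound calibration change, and this is exactly why the final rate of \eqref{eq:stopt2} agrees with \eqref{eq:tensor_concentration} under the substitution $d\mapsto r\log d$.

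\textbf{Step 1 (sparse net).} For each support $S\subset[d]$ with $|S|=r$, fix a $(1/6)$-net of the unit sphere in $\mathbb{R}^{S}$ of cardinality at most $12^{r}$. Taking the union over all $\binom{d}{r}$ such $S$ produces a net $\mathcal{N}$ of $\{u\in\mathbb{S}^{d-1}:\|u\|_0\le r\}$ satisfying $|\mathcal{N}|\le \binom{d}{r}\cdot 12^{r}\le \exp(C_1 r\log d)$.

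\textbf{Step 2 (reduction to the net).} Set $A=\hat M-\EE[Y\cdot S_3(X)]$. For any $r$-sparse unit $u$, there is $u_0\in\mathcal{N}$ on the \emph{same} size-$r$ support with $\|u-u_0\|_2\le 1/6$; in particular $u-u_0$ is also $r$-sparse. The symmetric-trilinear identity
\[
A(u,u,u)-A(u_0,u_0,u_0)=A(u-u_0,u,u)+A(u_0,u-u_0,u)+A(u_0,u_0,u-u_0)
\]
together with $\|u\|_2=\|u_0\|_2=1$ yields $|A(u,u,u)-A(u_0,u_0,u_0)|\le 3\|u-u_0\|_2\,\|A\|_{\oper,r}\le \tfrac12\|A\|_{\oper,r}$, hence $\|A\|_{\oper,r}\le 2\max_{u_0\in\mathcal{N}}|A(u_0,u_0,u_0)|$.

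\textbf{Step 3 (pointwise concentration).} For a fixed unit $u_0\in\mathbb{S}^{d-1}$, the scalar $S_3(X)(u_0,u_0,u_0)=\langle X,u_0\rangle^{3}-3\langle X,u_0\rangle$ is (up to a scale) the third Hermite polynomial in the standard Gaussian $\langle X,u_0\rangle$, so its $\psi_{2/3}$-norm is bounded by an absolute constant. Combined with $\|Y\|_{\psi_1}\le \Psi$ from Assumption \ref{assume:moments}, the product $W_{u_0}(X,\cZ)=Y\cdot S_3(X)(u_0,u_0,u_0)$ is sub-Weibull of order $\alpha=(1+3/2)^{-1}=2/5$ with $\|W_{u_0}\|_{\psi_{2/5}}\le K_0\Psi$. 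Applying the Bernstein-type tail bound for i.i.d.\ sub-Weibull summands (the very inequality derived in the proof of Theorem \ref{thm:tensor_concentration}) gives, for all $t>0$,
\[
\PP\bigl(|A(u_0,u_0,u_0)|>t\bigr)\le 2\exp\!\Bigl(-c\,n\min\bigl\{t^{2}/K^{2},(t/K)^{2/5}\bigr\}\Bigr).
\]

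\textbf{Step 4 (union bound).} Choose
\[
t=K\max\!\left(\sqrt{\frac{C_2 r\log d}{n}},\ \frac{(C_2 r\log d)^{5/2}}{n}\right)
\]
with $C_2$ large enough that the pointwise tail is $\le 2\exp(-4r\log d)$. Union-bounding over the at most $\exp(C_1 r\log d)$ points of $\mathcal{N}$ and then doubling via Step 2 yields \eqref{eq:stopt2} on an event of probability at least $1-8\exp(-3r\log d)$, provided $nd$ is large enough for the Bernstein inequality to be in its ``$\sqrt{r\log d/n}$'' regime (equivalently $n\gtrsim (r\log d)^{4}$ is the crossover point).

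\textbf{Main obstacle.} The only nontrivial analytic input is the sub-Weibull concentration in Step 3, where one must track the interplay between the sub-exponential response $Y$ and the cubic-Gaussian score contraction $S_3(X)(u_0,u_0,u_0)$ and obtain the sharp order-$2/5$ tail with Bernstein-type two-regime behavior; this is already the bulk of the work done for Theorem \ref{thm:tensor_concentration}, and in the sparse version it can be invoked verbatim. The genuinely new ingredient is the covering: the support-wise net construction (Step 1) and the key fact that $u-u_0$ remains $r$-sparse (Step 2) are what make the dimension $d$ enter only through the factor $r\log d$ in the final rate.
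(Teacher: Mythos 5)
Your proof follows the same route as the paper's: a $(1/6)$-net of the $r$-sparse unit sphere, the pointwise two-regime concentration of the cubic score contraction against the sub-exponential response (the paper's Lemma~\ref{lemma:sum_sub_exponential}), and a union bound over the net; your support-by-support net construction in Step 1 is in fact a slightly more careful version of the paper's Lemma~\ref{lemma:tensor_net}, since it guarantees that $u-u_0$ stays $r$-sparse, and the trilinear polarization in Step 2 then gives the same factor $2$ after the Banach equivalence for the restriction of $A$ to the support. One slip to fix: the pointwise tail in Step 3 should read
\[
\PP\bigl(|A(u_0,u_0,u_0)|>t\bigr)\le 2\exp\Bigl(-c\min\bigl\{\,n t^{2}/K^{2},\ (n t/K)^{2/5}\,\bigr\}\Bigr),
\]
not $2\exp\bigl(-c\,n\min\{t^{2}/K^{2},(t/K)^{2/5}\}\bigr)$ — the factor $n$ must sit inside the $2/5$-power rather than as a multiplier outside the $\min$, since the heavy-tailed regime decays only like $\exp(-c(n t)^{2/5})$. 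Your choice of $t$ in Step 4 is nonetheless calibrated for the correct (weaker) bound, so the stated rate and the $n\gtrsim(r\log d)^{4}$ crossover are right.
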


The proof of the above theorem is deferred to the appendix. We now state the following deterministic assumption on the parametric components required in the high-dimensional setting.
\begin{assumption}[High-dimensional Deterministic Assumptions] \label{assume:parametershd}
Let $\{ u_j \}_{ j \in [k]}$ be a set of vectors in $\RR^d$  and let $U= [ u_1, \cdots, u_k]$ be the matrix with the vectors as its columns. The following  three conditions on $\{ u_j \}_{ j \in [k]}$  are assumed  to hold for  both  $\{ \beta_j^* \}_{j \in [k]}$ and $\{ \theta_j^* \}_{j\in [k]}$.
\begin{enumerate}
\item \textrm{Sparsity.} The vectors $u_j$ has at most $s$ non-zero entries, i.e.,  $\|u_j^* \|_0 \leq s$ for any $j \in [d]$.
\item \textrm{Incoherence condition.}  There exist absolute constants $C_5$ and $C_6$ such that
 $$ \psi = \max_{i \neq j} | u_i^\top u_j|  \leq C_5 / \sqrt{s}  \qquad\text{and}\qquad \| U\|_{\oper} \leq 1+ C_6\sqrt{k /s}.$$
\item \textrm{Overcompleteness.} The number of mixture components $k = o (d^{3/2})$.
\end{enumerate}
\end{assumption}
\noindent While the form of the incoherence and overcompleteness conditions are same as in the low-dimensional setting, we additionally assume that the parametric components are $s$-sparse. Estimation in this setting corresponds to decomposing the moment tensors in \eqref{cubemomenttensor}  into sparse factors. For this problem, \cite{sun2017provable} proposed a sparse tensor power method for such a decomposition, outlined in Algorithm~\ref{alg:clustep}. We now state the main result of this section based on the notations below. Analogous to the low-dimensional setting, we define
\begin{align*}
\phi(\hat M, \gamma) & =  \frac{2\sqrt{5}}{\gamma_{\min}} \|  \hat M - \EE [ Y \cdot S_3(X)]\|_{\oper, (s+\bar s)} + \frac{2\sqrt{5} C_6 \gamma_{\max}}{\gamma_{\min}} \sqrt{k} \psi^2,\\
\phi_0(\gamma)& =  \min \Bigg[~\frac{\gamma_{\min}}{6 \gamma_{\max}} - \frac{C_6 \sqrt{k}}{s}, \frac{\gamma_{\min}}{4\sqrt{5} C_7\gamma_{\max}} - \frac{2C_5}{C_7 \sqrt{s}} \Bigg( 1 + C_6 \sqrt{\frac{k}{s}} \Bigg)^2 ~\Bigg],
\end{align*}
where $Y = h_1(\mathcal{Z})$ when $\hat M = \hat M_1$ and  $Y = h_2(\mathcal{Z})$ when $\hat M = \hat M_2$.
Now we are ready to  state the estimation rates for  the high-dimensional setting.

\begin{theorem}[Rates in High-dimensions]\label{thm:hdrates}
For  the two  models in Definitions~\ref{def:sim0} and~\ref{def:sim},
let $\{ \hat \beta_j \}_{j \in [k]}$ and $\{ \hat \theta_j \}_{j\in [k]}$ be the estimators returned by  Algorithm~\ref{alg:octpower} with $\hat{M}_1$ and $\hat{M}_2$ as inputs, respectively.
Under Assumptions \ref{assume:moments} and  \ref{assume:parametershd}, we assume  the number of iterations satisfy
$N \geq C_8 \log(\phi_0(\gamma)/\phi(\hat M_1,\gamma))$ and $N \geq C_9 \log(\phi_0(\gamma)/\phi(\hat M_2,\gamma))$ for  $\hat M_1$ and $\hat M_2$ are used respectively. Then with probability tending to 1, for any $j \in [k]$,  
$d(\hat \beta_j, \beta^*_j)$ is upper bounded by
\begin{align*}
 \frac{2\sqrt{5}}{\gamma_{\min}} \Biggl( K_{3} \cdot \max \biggl \{ \biggl [ \frac{(s+\bar s) \log (d )}{n} \biggr ]^{1/2}, \frac{ [  (s+\bar s) \log (d) ]^{5/2}} {n} \biggr\}\Biggr) +\frac{2\sqrt{5} C_1 \gamma_{\max} \sqrt{k} \psi^2}{\gamma_{\min}}
\end{align*}
as long as the initialization $\hat \beta^{(0)}_\tau $ satisfies   $d(\hat \beta^{(0}_\tau, \theta^*_j) \leq \phi_0(\gamma)$. The same bound also holds for  $d(\hat \theta_j, \theta^*_j) $ as long as the corresponding initialization satisfies $d(\hat \beta^{(0}_\tau, \beta^*_j) \leq \phi_0(\gamma)$.\end{theorem}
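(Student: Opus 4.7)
The plan is to mirror the argument for Theorem~\ref{thm:ldrates}, replacing the tensor power method convergence result of~\cite{anandkumar2014guaranteed} with its truncated counterpart from~\cite{sun2017provable}, and replacing the dense operator norm concentration bound of Theorem~\ref{thm:tensor_concentration} with the sparse analogue in Theorem~\ref{thm:tensor_concentration1}. By Lemma~\ref{lemma:moments} the population tensor $\EE[Y \cdot S_3(X)]$ admits a symmetric third-order decomposition along the true parameters with coefficients in $[\gamma_{\min},\gamma_{\max}]$ by Assumption~\ref{assume:moments}(1.3); by Assumption~\ref{assume:parametershd} the factors are $s$-sparse and incoherent. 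This places $\EE[Y \cdot S_3(X)]$ squarely in the class of incoherent sparse overcomplete third-order tensors to which the deterministic analysis of~\cite{sun2017provable} applies.

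The core of the argument is the following deterministic statement from~\cite{sun2017provable}: if an initialization $\hat v^{(0)}_\tau$ satisfies $d(\hat v^{(0)}_\tau, u_j) \leq \phi_0(\gamma)$ for some true factor $u_j$ (standing for $\beta_j^*$ or $\theta_j^*$), then $N$ iterations of the truncated power step~\eqref{ttpiter} with truncation level $\bar s$ return a vector $\hat v^{(N)}_\tau$ with $d(\hat v^{(N)}_\tau, u_j) \leq \phi(\hat M,\gamma)$, provided $N$ exceeds the logarithmic threshold prescribed in the theorem statement. The geometric contraction underlying this bound decomposes into a perturbation piece of order $\|\hat M - \EE[Y\cdot S_3(X)]\|_{\oper,(s+\bar s)}/\gamma_{\min}$ and a bias piece of order $\gamma_{\max}\sqrt{k}\psi^2/\gamma_{\min}$ coming from overcomplete decomposition error.

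The critical link between the deterministic and probabilistic halves is that every iterate produced by Algorithm~\ref{alg:octpower} is $\bar s$-sparse by construction of the truncation step~\eqref{ttpiter}, and every true parameter is $s$-sparse; hence the tensor-vector products appearing in the contraction analysis probe $\hat M - \EE[Y\cdot S_3(X)]$ only against unit vectors of joint support at most $s+\bar s$. Thus the correct perturbation quantity is the sparse operator norm $\|\cdot\|_{\oper,(s+\bar s)}$ rather than the full operator norm, and I would apply Theorem~\ref{thm:tensor_concentration1} with $r = s + \bar s$ to obtain, with probability at least $1-8\exp[-3(s+\bar s)\log d] \to 1$,
\begin{align*}
\bigl\| \hat M - \EE[Y\cdot S_3(X)] \bigr\|_{\oper,(s+\bar s)} \leq K_3 \max\Bigl\{ \sqrt{(s+\bar s)\log(d)/n},\; [(s+\bar s)\log(d)]^{5/2}/n \Bigr\}.
\end{align*}
Substituting into $\phi(\hat M,\gamma)$ produces the first term of the claimed rate, while the $\sqrt{k}\psi^2$ term is already encoded in $\phi$. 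To lift the per-component statement to all $k$ factors simultaneously, I would invoke the clustering step in Algorithm~\ref{alg:clustep}: as long as the $L$ initializations cover every basin $\{d(\cdot,u_j)\leq \phi_0(\gamma)\}$, the greedy selection returns $k$ centroids in one-to-one correspondence with $\{u_j\}_{j\in[k]}$, each inheriting the single-component bound.

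The main obstacle I foresee is bookkeeping rather than new analysis. Two things must be checked with care: first, that the truncation step never inflates the active support beyond $\bar s$, so the $(s+\bar s)$-sparse operator norm is genuinely the correct perturbation quantity at every iterate and across all $L$ restarts (a union bound in Theorem~\ref{thm:tensor_concentration1} is built in via the exponent $-3r\log d$, which leaves generous slack); second, that the overcomplete-decomposition bias term imported from~\cite{sun2017provable} under the incoherence $\psi \leq C_5/\sqrt{s}$ maps cleanly onto the form $2\sqrt{5}C_1\gamma_{\max}\sqrt{k}\psi^2/\gamma_{\min}$ in the theorem statement, despite the apparent notational mismatch between the incoherence constants $C_1$ and $C_6$ used in the assumption.
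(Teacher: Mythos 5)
Your proposal matches the paper's proof exactly: it combines the deterministic convergence guarantee for the truncated tensor power method (Theorem 3.6 of \cite{sun2017provable}) with the sparse tensor operator-norm concentration bound of Theorem~\ref{thm:tensor_concentration1}, applied at sparsity level $r=s+\bar{s}$, which is precisely what the paper does. Your two ``bookkeeping'' concerns are real but minor---the first is handled automatically because every iterate of Algorithm~\ref{alg:octpower} has support size $\bar s$ and the truth has support $s$, and the second (the $C_1$ versus $C_6$ label) is merely a notational inconsistency in the paper's statement of the theorem rather than a mathematical obstacle.
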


\begin{proof}
The proof of the theorem follows immediately by combining the statements of Theorem 3.6 in ~\cite{sun2017provable} and our result on sparse tensor concentration in  Theorem~\ref{thm:tensor_concentration1}, proved in Appendix~\ref{sec:proof_thm2}.
\end{proof}
\begin{remark}
Similar to the low-dimensional case, in Theorem \ref{thm:hdrates}, the first two terms in the statistical rate characterizes the  error of  estimating the moment tensors in \eqref{cubemomenttensor} using the empirical tensors in \eqref{eq:moment_tensor1} and \eqref{eq:moment_tensor2}. Thanks to the  sparsity assumption, such an  estimation error depends only poly-logarithmically on  dimensionality $d$.  Indeed, if we pick $\bar s = \Theta(s)$, then the estimation error is predominantly controlled by a polynomial in $s/n$. Similar to the previous case, the estimation error has different behaviors in the low-sample regime ($n \leq s^4$) and in the high-sample regime ($n \geq s^4$). Up to poly-logarithmic terms in $d$, in the low-sample regime, the rate of convergence is dominated by the slower $s^{5/2}/n$ term and in the high sample regime it is dominated by  $\sqrt{s/n}$. Recall that in the low-dimensional case, the sample complexity and the rate of convergence are crucially dependent on $d$ and hence is not feasible for the high-dimensional situations. Indeed in the high-dimensional setting our estimator leverages the structural sparsity assumption, as is commonly done in the literature on high-dimensional statistics, to get a milder dependence on the dimensionality.
\end{remark} 

\begin{remark}\label{rem:higherell}
{As discussed in \S\ref{sec:models}, our theoretical results in both \S\ref{ref:ldmain} and~\S\ref{ref:hdmain} are detailed for the case of third-order moment tensor decompositions (i.e., $\ell =3$). It is indeed possible to easily extend our results for general $\ell$-th order decomposition. In this case, the overcompleteness assumption could be relaxed to $k = o(d^{\ell/2})$ allowing for a wider class of parametric components. Indeed this comes at the cost of requiring more samples to estimate a higher-order moment tensor accurately. Specifically, for the estimation error, one would obtain
\begin{align*}
\max  \bigg( \sqrt{\frac{d}{n}}, \frac{d^{(\ell+2)/2}}{n} \bigg) \quad \text{and} \quad \max  \bigg( \sqrt{\frac{(s+\bar{s}) \log(d)}{n}}, \frac{[(s+\bar{s})\log(d)]^{(\ell+2)/2}}{n} \bigg)
\end{align*}
respectively in Theorem~\ref{thm:ldrates} and~\ref{thm:hdrates}. In order to obtain the above rates, the main modification   required is to re-derive the concentration result in Lemma~\ref{lemma:sum_sub_exponential} for this setting. Seen from the proof of this lemma in \S\ref{sec:concentration_results}, for the $\ell$-th order moments, we could similarly construct i.i.d. sub-Gaussian random variables and apply Theorem \ref{thm:subgauss_poly} to obtain the desired concentration results.}
\end{remark}




 \section{Numerical Experiments}

In this section, we evaluate the finite-sample performances of the proposed estimators  via  numerical simulations. Without loss of generality, we only present the results for the discordant SIMs in Definition \ref{def:sim0}; the performance of mixture of SIMs are similar.  We consider both the low- and high-dimensional settings.
Throughout the experiments, for the $k$ latent single index models   $\mathcal{Z} = \{Z_j = f_j (\la \beta_j^*, X \ra, \epsilon_j ) \}_{j \in [k]}$, we set the link functions  to  be the same for simplicity.
Specifically, for any $j \in [j]$, we let
$f_j (u,v) = h(u) + v$, where $h \colon \RR\rightarrow \RR$
 is    one of the following three univariate functions:
\#\label{eq:exper_links}
h_1 (u ) &=  u^3 + 10 \cdot \exp(-u^2) \\ \nonumber
h_2 (u) &= u^3 + 5 \cdot \sin (2\cdot u^2) \\ \nonumber
 h_3 (u) &=u^3   + 10\cdot \tanh(u^2) . 
\#
In addition, we  let $\{ \epsilon_j \}_{j \in [k]}$   be i.i.d. Gaussian  random variables with mean zero and variance $1/k$, and
  set $X \sim N(0, I_d)$.
  Finally,  for  the low-dimensional case,  the signal parameters  $\{ \beta_j^* \}_{j \in [k]}$ are generated as follows. We   let $\{ v_j \}_{j \in [k]}$ be $k$ orthornormal vectors in $\RR^d$ and let $\{ e_j \}_{j \in [k]}$ be $k$ i.i.d. $N(0, I_d)$ random vectors. Then we define each $\beta_j^* $ by $v_j + \kappa \cdot e_j / \|  v_j + \kappa \cdot e_j  \|_2$, where   $\kappa >0$  is a small constant  chosen such that 
  the signal parameters satisfy the incoherence condition given in Assumption \ref{assume:parametersld}. 
   In addition, for the high-dimensional case, note that the signal parameters has $s$ nonzero entries. 
   We first generate $s$ incoherent vectors, denoted by $\{ u_j\}_{j \in [s]} \subseteq \RR^s$, in the same fashion as in the low-dimensional setting. Let $m = \lceil k / s \rceil$. We generate $m$ disjoint subsets $\cR_1, \ldots, \cR_m$ of 
$[d]$ with cardinality $r$ randomly. For any $j \in [k]$, suppose $j$ can be written as $j  = p m + q$ where $p, q \geq 0$ and $q < m$.  Then we let the support of $\beta_j^* $ be $\cR_{p+1}$, and let it be  $  u_{q+1}$     when restricting on the support. 
One could easily  verify that $\{ \beta_j^*\}_{j \in [k]}$ defined in this way satisfy Assumption~\ref{assume:parametershd}.

Furthermore, in the low-dimensional setting, for all the experiments, we set $d = 20$, $k \in \{3, \ldots, 7\}$, and let $n$ vary. Let $\{\hat \beta_j \}_{j \in [k]}$ be the final estimators returned by Algorithm \ref{alg:ocpower} with the input moment tensor $\hat M$ given  in \eqref{eq:moment_tensor1}. We set the number of initializations and iterations to be $L = 200$ and $N = 300$, respectively.  We access the estimation performance by computing   $\max_{j\in [k]} \{ d(\hat \beta_j , \beta_j^*) \}$, where the distance function  $d $ is defined in \eqref{eq:sign_flip_distance}. In   Figure \ref{fig:simulation}, we plot the estimation error against the inverse signal strength
$
\max\{ \sqrt{d/n}, d^{5/2} / n \}
$
for all the three link functions in \eqref{eq:exper_links},  based on $100$ independent trials for each $(n, d, k)$.
As shown in Theorem \ref{thm:ldrates},  the estimation error is bounded by a linear function of $\max\{ \sqrt{d/n}, d^{5/2} / n \} $.  Moreover,  the slope of this linear function does not depends on $n$, $d$, or $k$, and the intercept is bounded by $C\cdot \sqrt{k}\cdot \psi^2$ for some constant $C>0$, where $\psi$ is the incoherence parameter defined in \eqref{ref:hdmain}. As shown in Figure \ref{fig:simulation}, all the curves of estimation errors are below a straight line with positive slope and intercept, which corroborates   the statistical rates in the low-dimensional  settings established in Theorem \ref{thm:hdrates}.

Similarly, in the high-dimensional regime, we set $d = 100$, $s \in \{ 3, 4,5\}$, $k \in \{ 3, \ldots, 7\}$, and let $n$ vary. In this case, we also report the estimation error $\max_{j\in [k]} \{ d(\hat \beta_j , \beta_j^*) \}$, where $\{\hat \beta_j\}_{j \in [k]}$ are the output of Algorithm \ref{alg:octpower} with the input moment tensor $\hat M$ given  in \eqref{eq:moment_tensor1}. For the hyperparameters of Algorithm \ref{alg:octpower}, we set $L = 100$, $N = 200$,  and   $ \bar s = 3  s$ in all experiments, where $\bar s$ is the parameter of the truncation step. Moreover,  as  suggested by Theorem \ref{thm:hdrates}, we define the inverse signal strength  by $\max \{ \sqrt{s\log d/n} , ( s \log d)^{5/2} / n \}$, which reflects  the theoretical estimation accuracy. We plot the estimation error against the inverse signal strength in Figure  \ref{fig:simulation2}. As shown in the figures, the estimation errors are all bounded by a  linear function of the   inverse signal strength and are not sensitive to the choice of $k$,    which  is suggested by Theorem \ref{thm:hdrates}.  Specifically, the three plots in the first row correspond to the results for $s=3 $, and the case of $s = 4$ and $s = 5$ are reported in the second and last row respectively. The three columns correspond to link functions $h_1$, $h_2$, and $h_3$, respectively.

  \begin{figure}[h!]
	\centering
	\begin{tabular}{ccc}
		\hskip-30pt \includegraphics[width=0.35\textwidth]{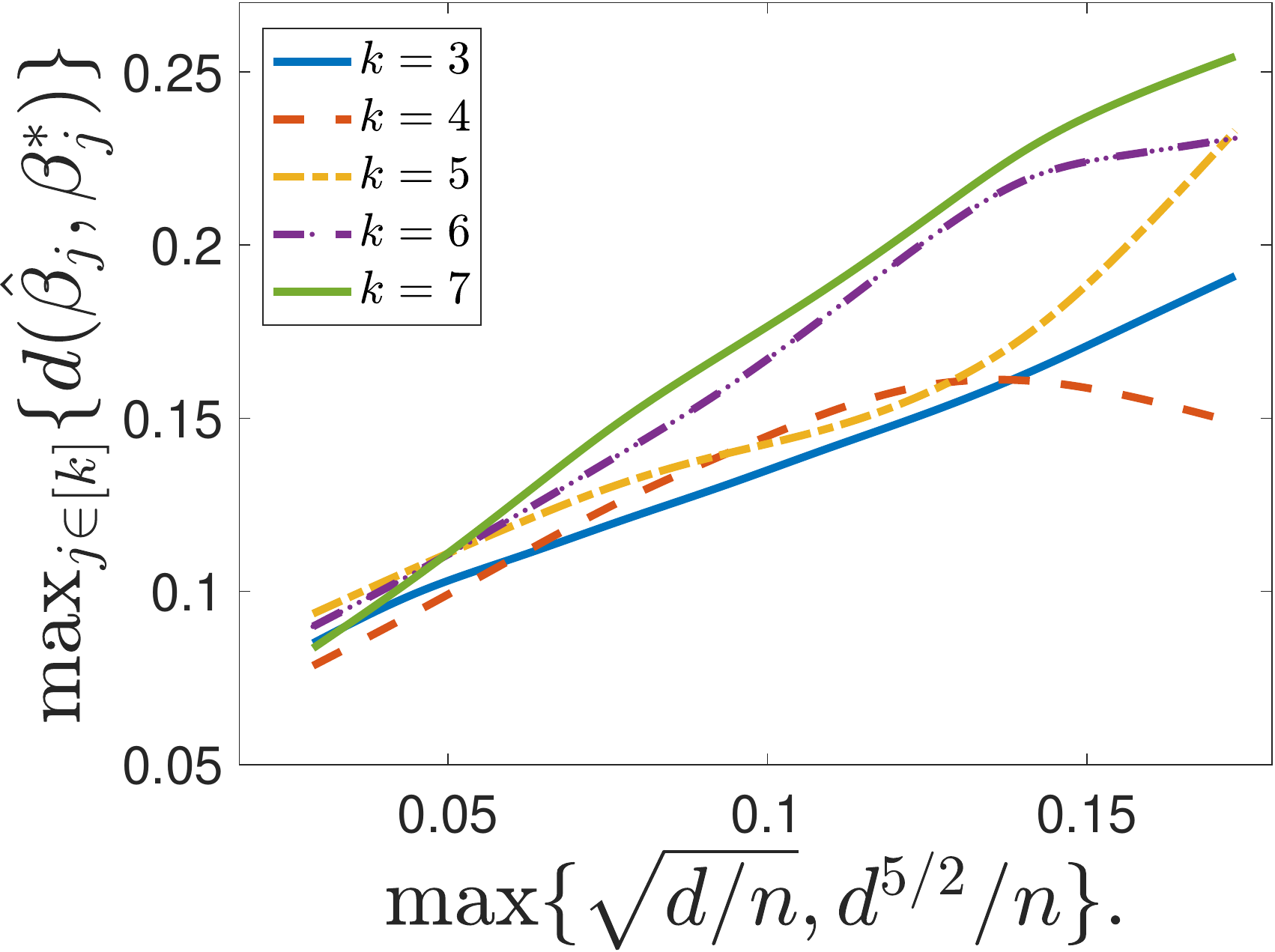}
		&
		\hskip-30pt \includegraphics[width=0.35\textwidth]{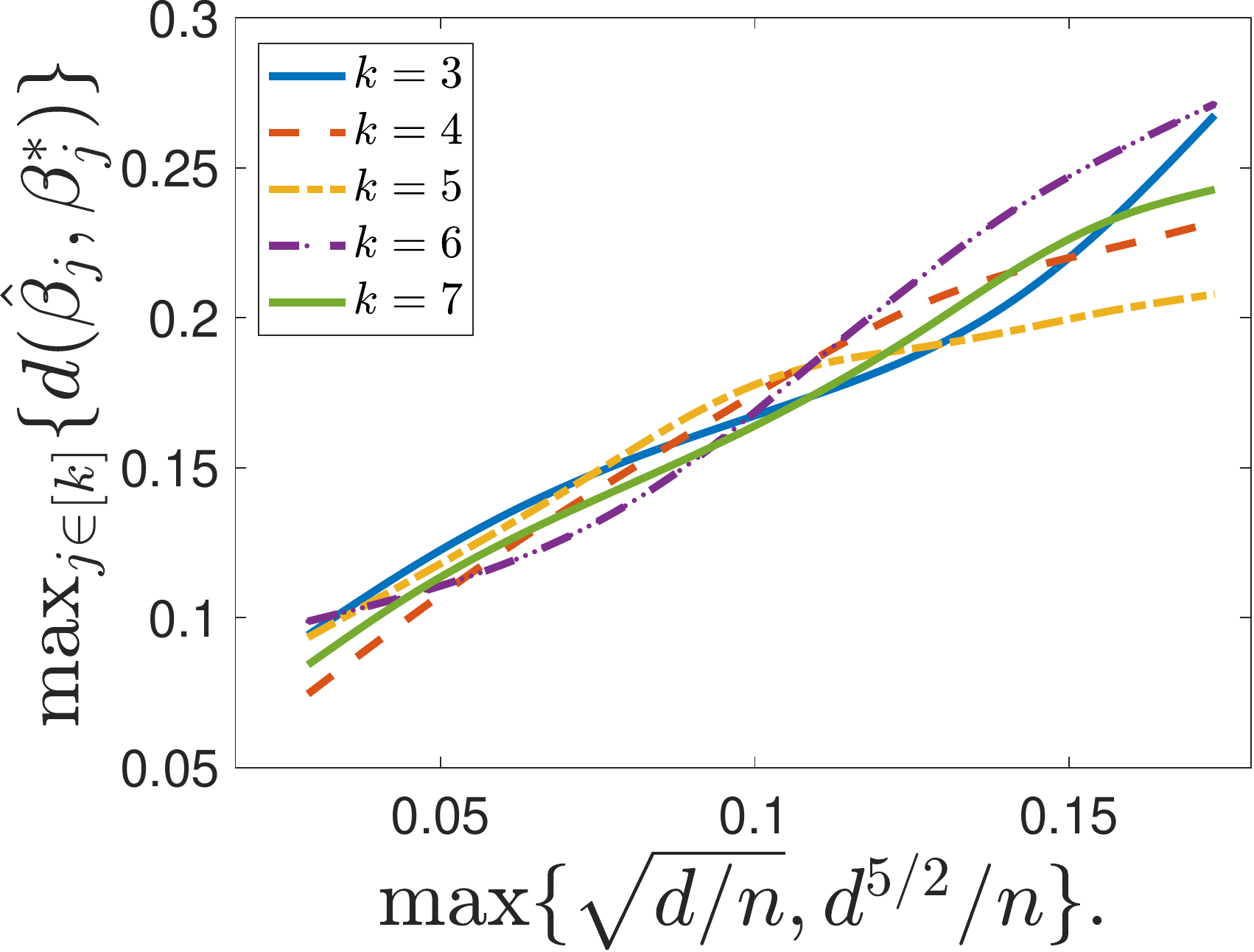}
		 &\hskip-20pt  \includegraphics[width=0.35\textwidth]{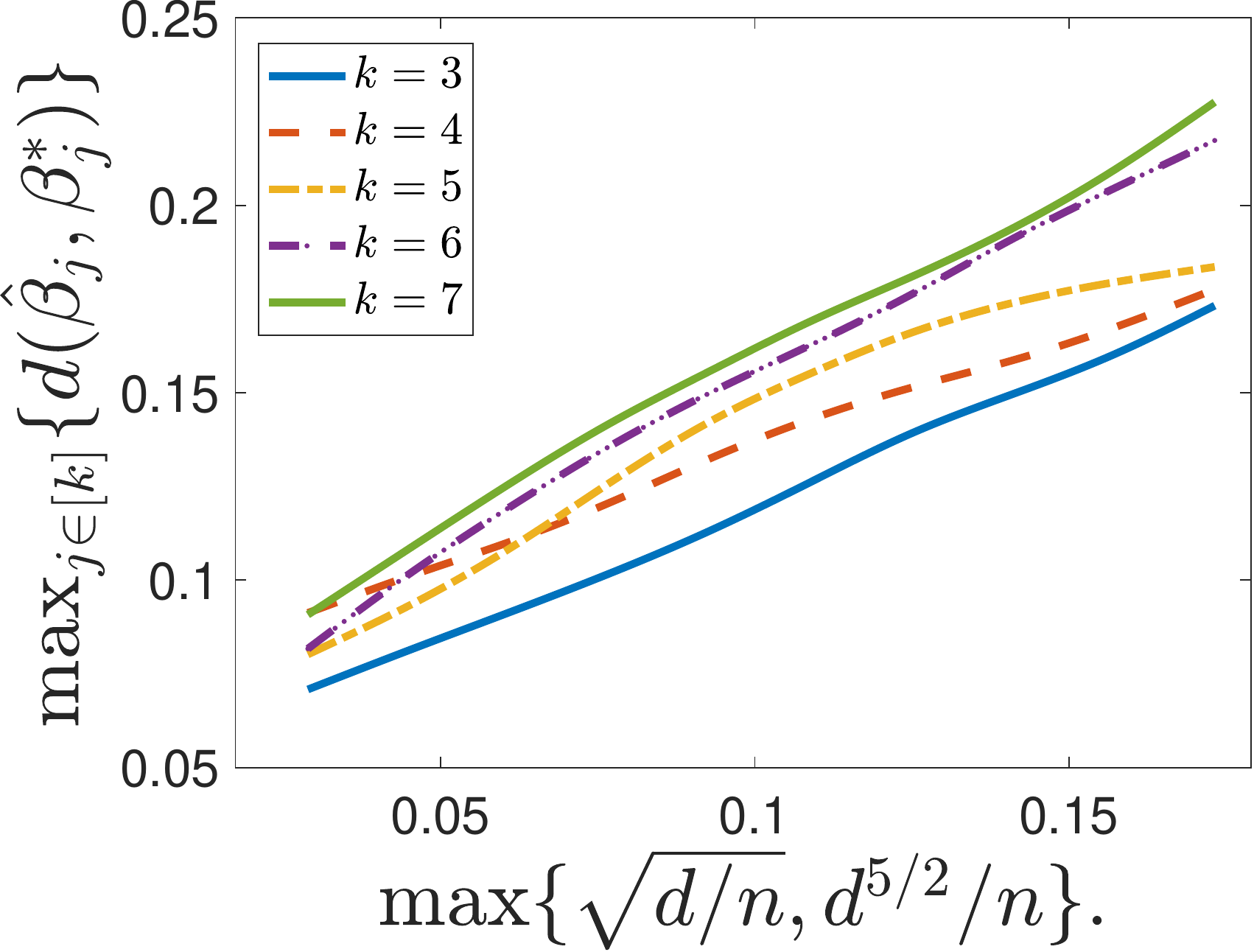} \\
		(a) $h_1(u)  = u^3 + 10 \cdot  \exp( - u^2)  $ & (b) $ h_2(u)  = u^3 + 5 \cdot \sin (2\cdot u^2)$, & (c) $ h_3(u)  =u^3 + 10 \cdot \tanh(u^2)$  	
	\end{tabular}\\
	\caption{\small \em Plots of the  estimation error    $\max_{j\in [k]} \{ d(\hat \beta_j , \beta_j^*) \}$ against the inverse signal strength $\max\{ \sqrt{d/n}, d^{3/2} / n \}$,  in which $\{\hat \beta_j\}_{j \in [k]} $ are  the output of Algorithm \ref{alg:ocpower} with the input moment tensor $\hat M$ defined in \eqref{eq:moment_tensor1}.   The  link function is    one of $h_1$, $h_2$, and $h_3$ in \eqref{eq:exper_links}. The three columns correspond to $h_1$, $h_2$, and $h_3$, respectively. Moreover, we set  $d = 20 $, $k \in \{3, \ldots, 7\}$,  and let $n$ vary. We generate each   figure   based on $100$ independent trials for each $(n, d, k)$.  }
	\label{fig:simulation}
\end{figure}

  \begin{figure}[h!]
	\centering
	\begin{tabular}{ccc}
		\hskip-10pt \includegraphics[width=0.35\textwidth]{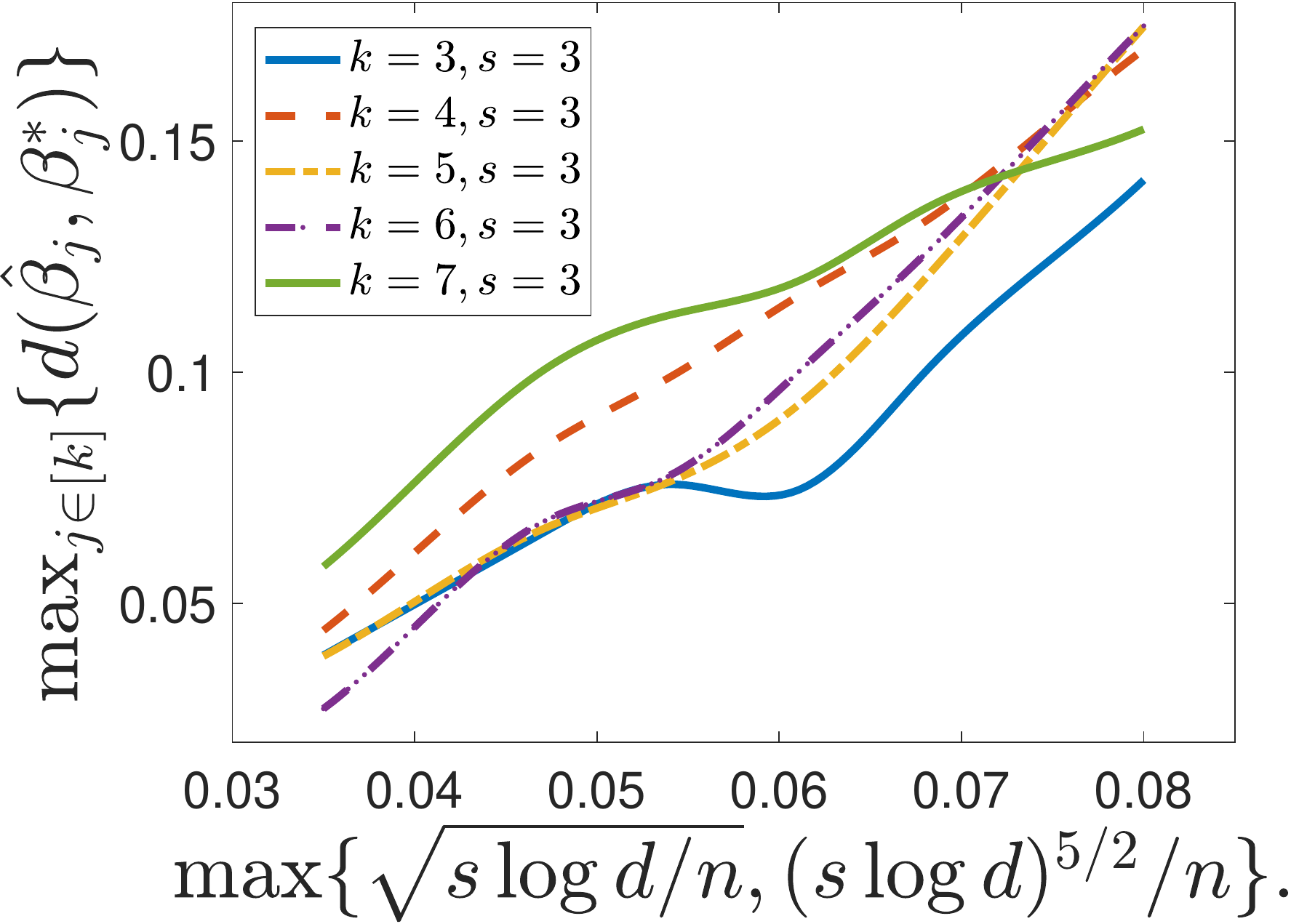}
		&
		\hskip-10pt \includegraphics[width=0.35\textwidth]{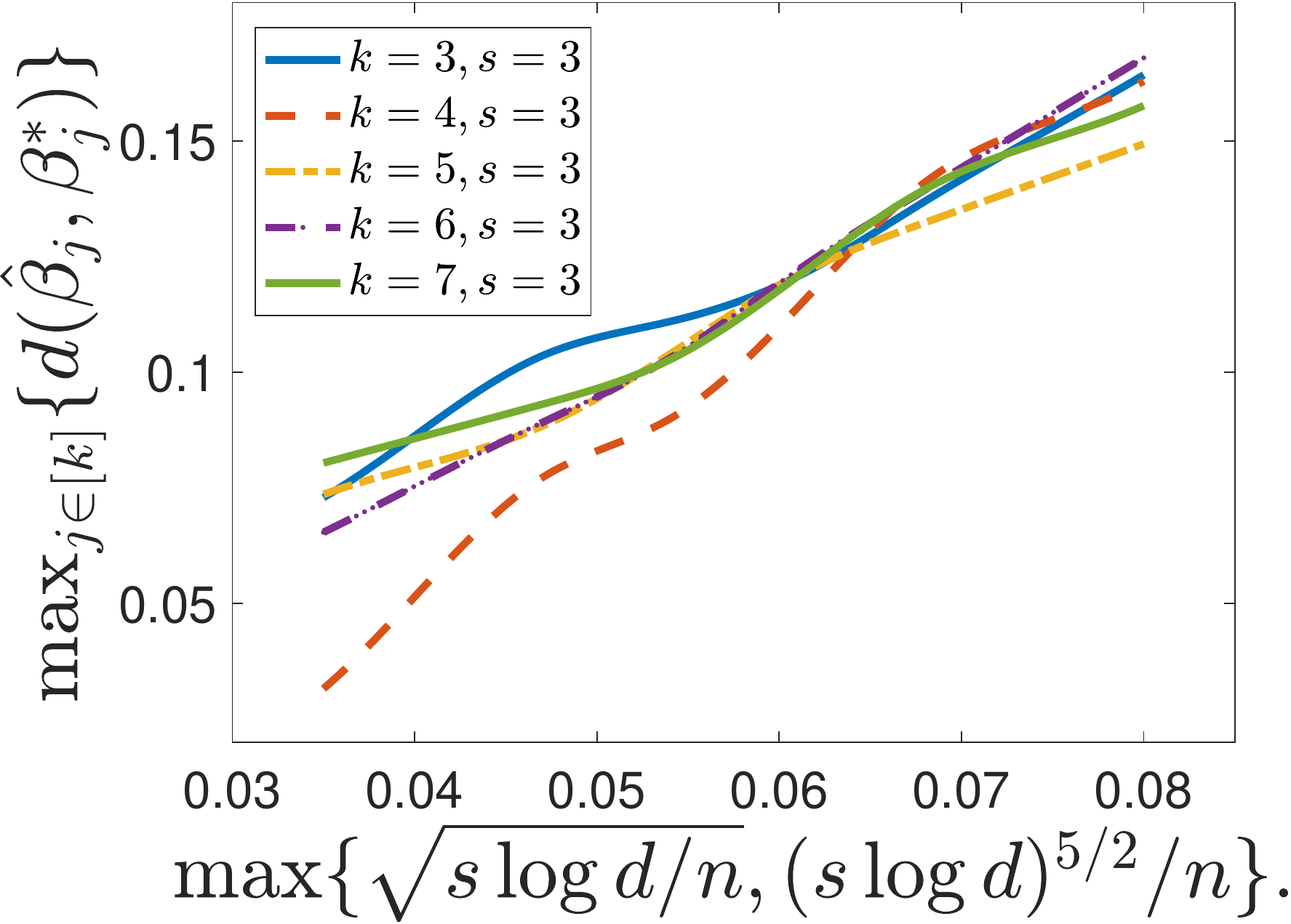}
		 &\hskip-10pt  \includegraphics[width=0.35\textwidth]{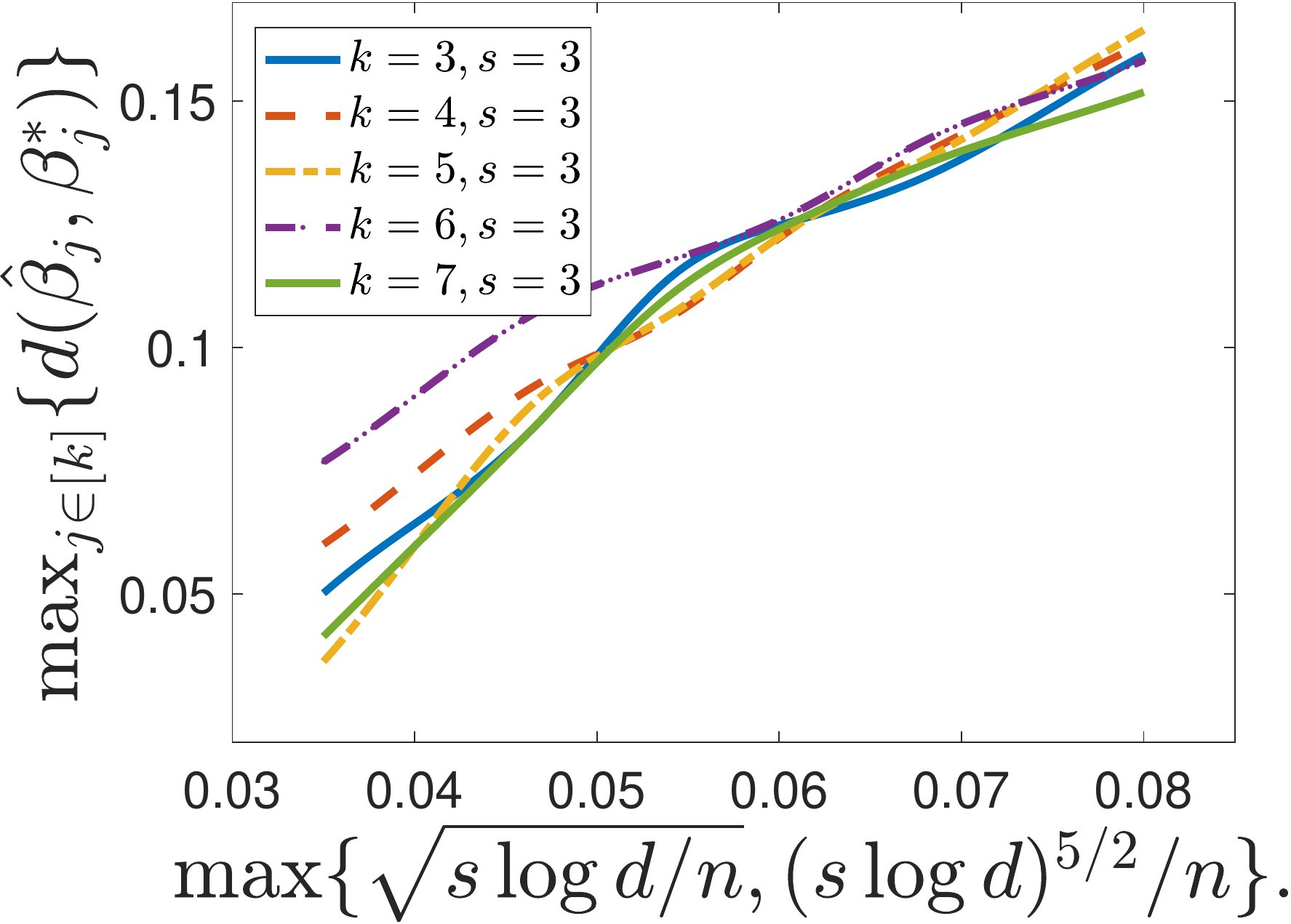} \\
		 \hskip-10pt \includegraphics[width=0.35\textwidth]{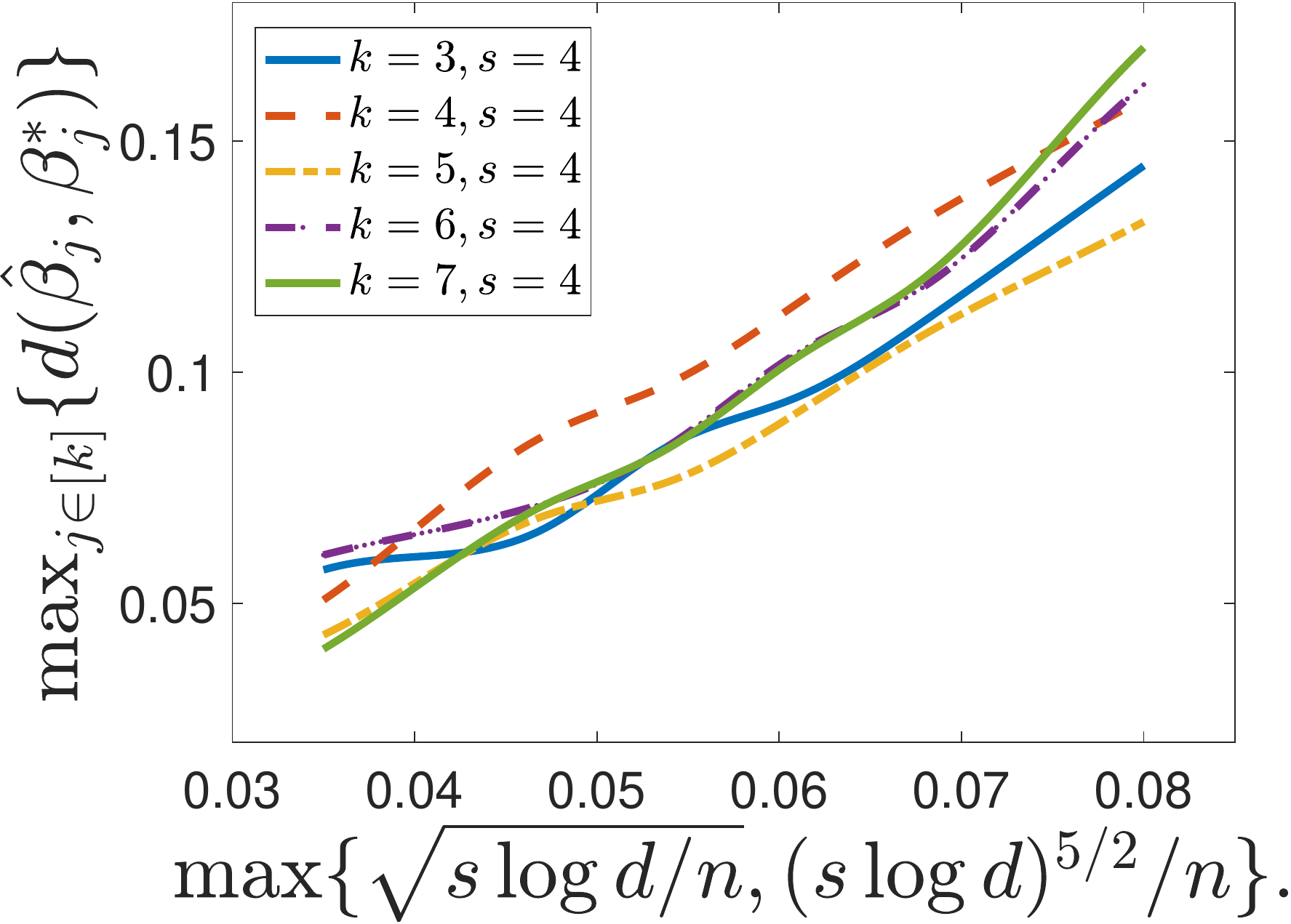}
		&
		\hskip-10pt \includegraphics[width=0.35\textwidth]{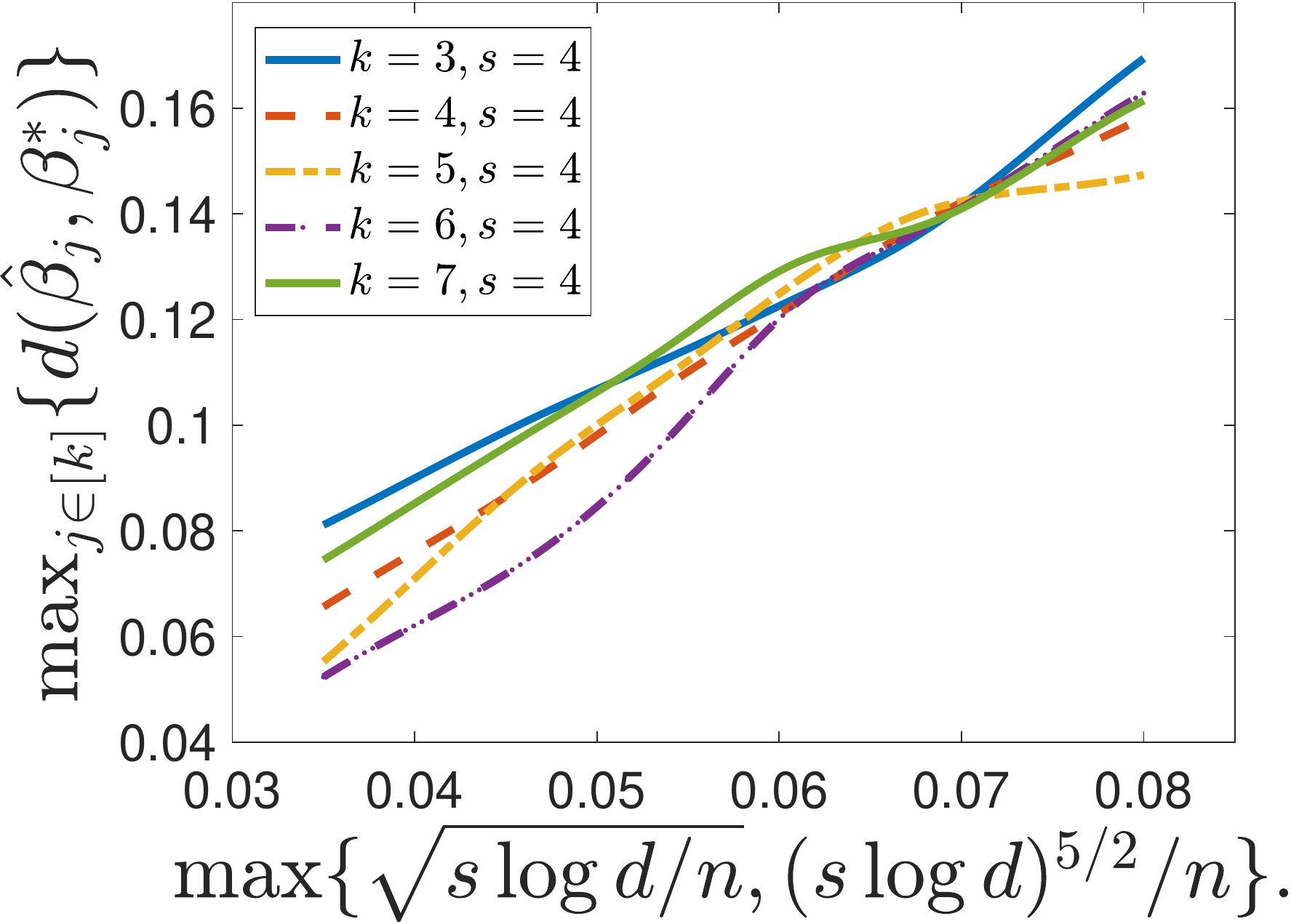}
		 &\hskip-10pt  \includegraphics[width=0.35\textwidth]{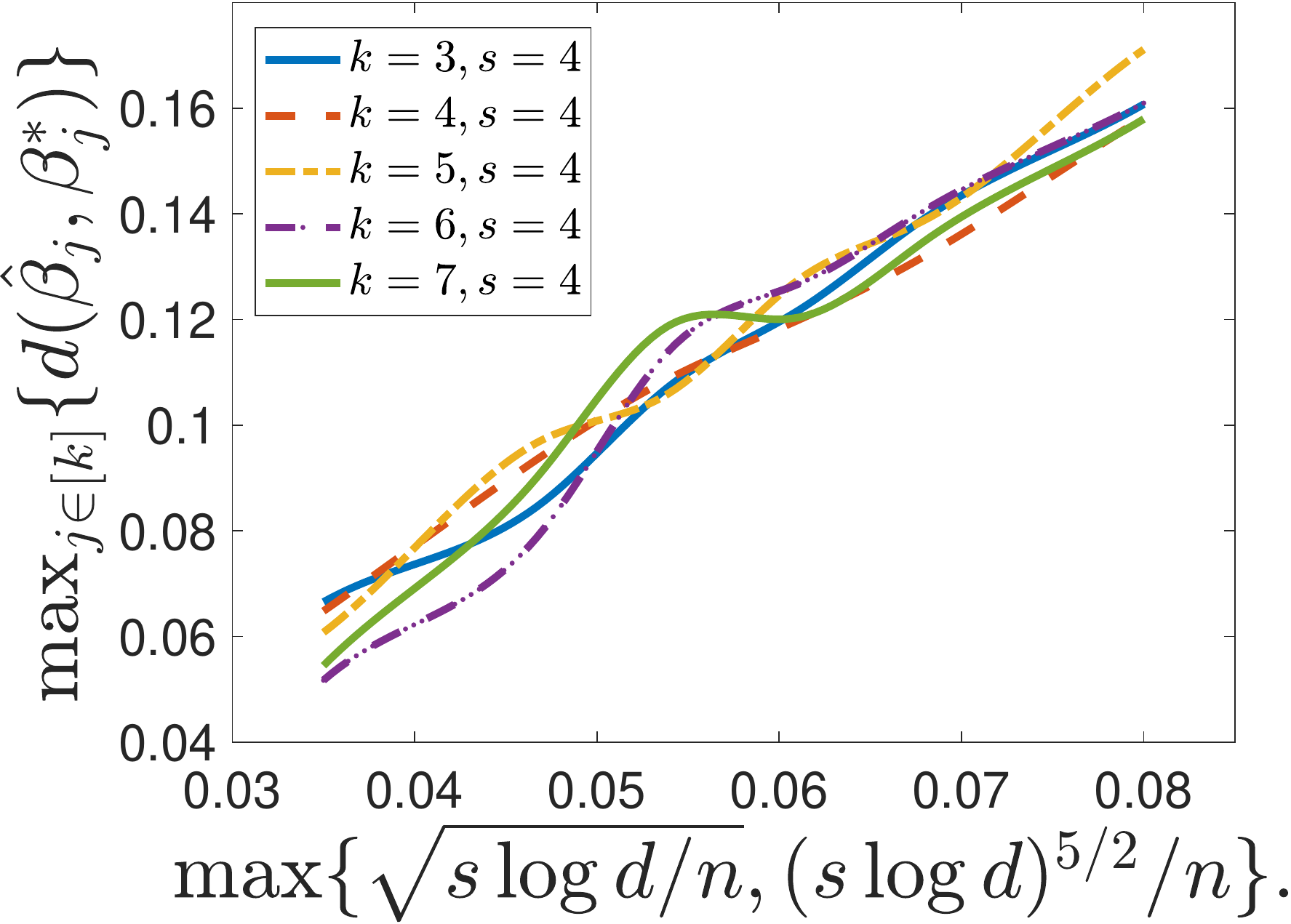} \\
		 \hskip-10pt \includegraphics[width=0.35\textwidth]{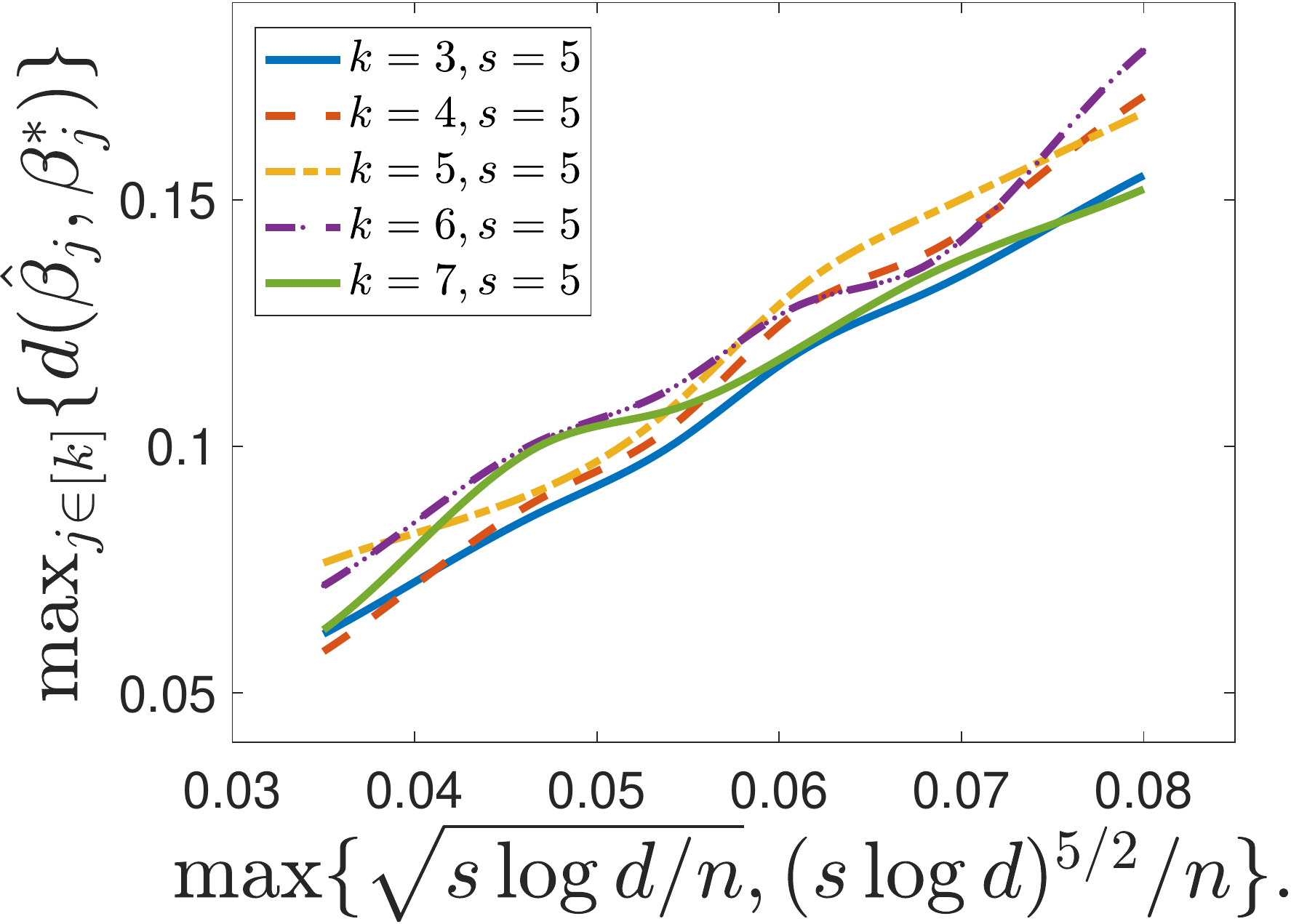}
		&
		\hskip-10pt \includegraphics[width=0.35\textwidth]{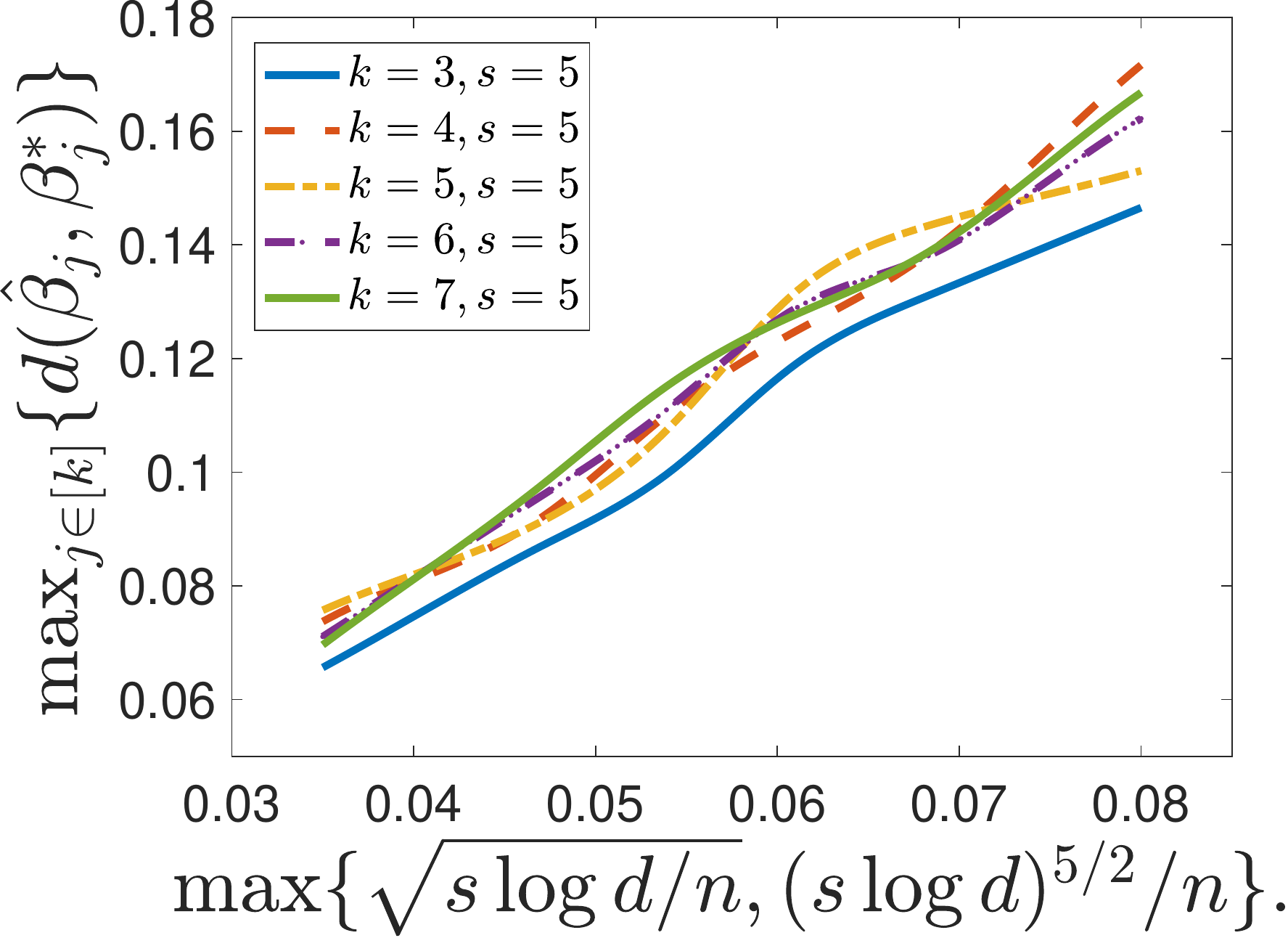}
		 &\hskip-10pt  \includegraphics[width=0.35\textwidth]{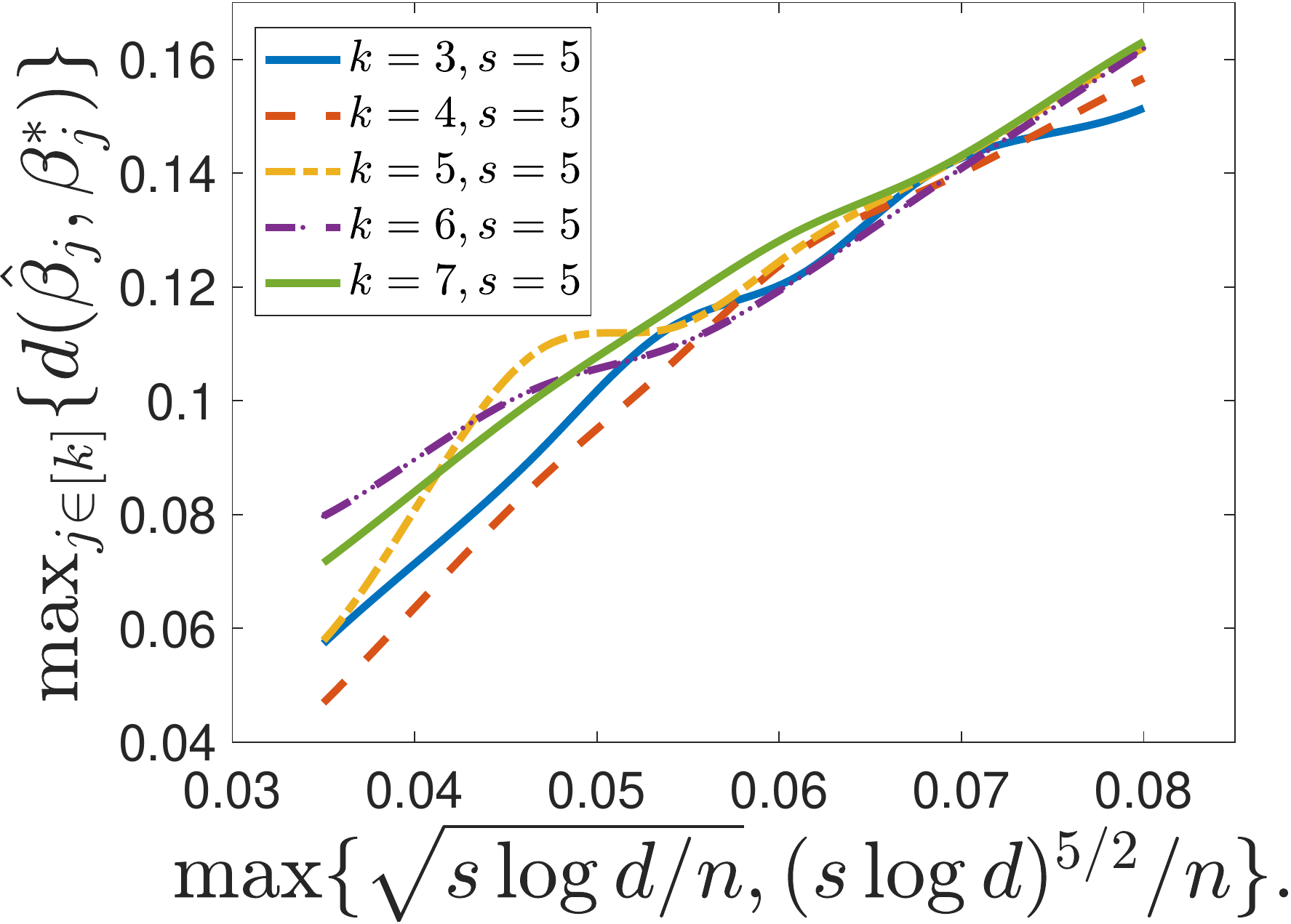} \\
		(a) $h_1(u)  = u^3 + 10 \cdot  \exp( - u^2)  $ & (b) $ h_2(u)  = u^3 + 5 \cdot \sin (2\cdot u^2)$, & (c) $ h_3(u)  =u^3 + 10 \cdot \tanh(u^2)$  	
	\end{tabular}\\
	\caption{\small \em Plots of the  estimation error    $\max_{j\in [k]} \{ d(\hat \beta_j , \beta_j^*) \}$ against the inverse signal strength $\max \{ \sqrt{s\log d/n} , ( s \log d)^{5/2} / n \}$ are  the output of Algorithm \ref{alg:ocpower} with the input moment tensor $\hat M$ defined in \eqref{eq:moment_tensor1}.   The  link function is  one of $h_1$, $h_2$, and $h_3$ in \eqref{eq:exper_links}. Moreover, we set  $d = 100 $, $s \in \{3,4,5\}$, $k \in \{3, \ldots, 7\}$,  and let $n$ vary. We generate each   figure   based on $100$ independent trials for each $(n, d, s, k)$.  }
	\label{fig:simulation2}
\end{figure}


\section{Related Work} \label{sec:rwork}

To the best of our knowledge, there is no related work on the general DAIM considered in this work apart from linear correspondence retrieval~\citep{andoni2017correspondence}. Mixture models are a popular class of models in the literature to handle heterogeneity, with applications to regression~\citep{chaganty2013spectral, zhong2016mixed}, classification~\citep{jacobs1991adaptive, sun2014learning, sedghi2016provable} and clustering~\citep{mclachlan2004finite, verzelen2017detection,zhao2017fast}. In terms of estimation, most of the above works focus on the parametric and/or low-dimensional setting. In comparison, handling heterogeneity in a high-dimensional and completely nonparametric setting is much more challenging. Recently, a mixture of nonparametric regression model was analyzed by~\cite{huang2013nonparametric}. A related technique of modal regression was also analyzed by~\cite{chen2016nonparametric}. Both methods are based on kernel smoothing techniques. While being completely nonparametric, they also suffer from the curse of dimensionality and are not applicable in the overcomplete setting. An interesting comprise is offered by the semiparametric single index model that we study in this work and described in detail in \S\ref{sec:models}. Such a model is popular in the econometrics literature and in particular allows us to deal with the overcomplete high-dimensional setting efficiently. The recent work of~\cite{xiang2016mixture} also proposed a similar model in the low-dimensional setting. But they do not address the statistical and computational issues associated with estimation in such models and their estimation procedure is not applicable in the overcomplete high-dimensional setting we consider. Furthermore, recall that our estimators are based on decomposing certain higher-order moment tensors. In the recent past, several works have proposed the use of tensor decomposition techniques for estimation in several parametric latent variable models apart from the ones cited above. Apart from mixture models for prediction, such techniques have also been used for estimating means of a Gaussian mixture model~~\citep{hsu2013learning,anandkumar2014tensor}, for estimating community membership in mixed membership models~\citep{, anandkumar2013tensor}, estimating the parametric components in generalized linear model~\citep{sedghi2016provable} and hidden-layer neural networks~\citep{zhong2017recovery,mondelli2018connection} and tensor sketching~\citep{hao2018sparse}. While being related, those works essentially consider parametric latent variable models in low-dimensional settings and are not suitable to the cases with unknown links or model misspecification that we consider. Furthermore, several works have also concentrated on coming up with faster algorithm for tensor decomposition in general (see for example the recent work of ~\cite{ge2017optimization} and the references there in). We remark that any such algorithmic advances could be directly applicable to the model that we consider.



\section{Discussion}\label{sec:ending}
In this work, we propose moment-based estimators for estimating the parametric components of overcomplete discordant additive index models, in both low and high-dimensional setting. Our models are motivated by sampling and heterogeneity issues common in high-dimensional big data paradigm. Our estimators are based on using tensor power method to decompose certain higher-order moment tensors. We establish statistical rate of convergence for our estimators. Numerical results are provided to corroborate the theoretical results. 

We conclude the paper with a discussion of two future directions. While the main focus of the paper was on estimating the parametric components of the model in~\eqref{eq:discoaim}, one can use the estimated parametric components to obtain the nonparametric components as well as a second step, using the idea of errors in variable additive index model~\citep{fan1993nonparametric, liang1999estimation, carroll2006measurement}. Next, in this paper we mainly considered the case of Gaussian covariates. Using the score based version of Stein's identity~\citep{stein2004use}, assuming knowledge of the density of $X$, our methods could be extended to non-Gaussian covariates. A potential issue is the score function might be heavy-tailed. Recently~\cite{yang2017estimating} proposed a method for dealing with heavy-tailed score functions but it is not clear how to extend their approach for the overcomplete setting. Furthermore, using the zero-bias transformation version of Stein's identity and assuming a stringent structure on the parameter vectors $\beta_j^*$, similar estimation rates with sub-Gaussian covariates could be obtained. Relaxing the Gaussian assumption, without additional assumption on either the covariates or on the parameter vectors seems to be a much harder task that we plan to address in the near future.

\appendix{}
\section{Proofs of Main Results}\label{sec:aux}
In this section, we first provide the proofs of moment bounds, concentration bounds and net argument used to obtained the tensor concentration results. Before we proceed, we recall the definition of the $\psi_1$-norm and $\psi_2$-norm for a random variable $X$:
 \#\label{eq:psi_2}
 \| X \|_{\psi_1} = \sup _{p \geq 1} \bigl \{ p^{-1} \cdot  \bigl ( \EE | X| ^{p} \bigr )^{1/p} \bigr \}
 \qquad  \| X \|_{\psi_2} = \sup _{p \geq 1} \bigl \{ p^{-1/2} \cdot  \bigl ( \EE | X| ^{p} \bigr )^{1/p} \bigr \}.
\#
Such norms are closely associated with the notion of sub-Gaussian and sub-exponential random variables that are standard in the literature on high-dimensional statistics; we refer the reader to~\citep{vershynin2010introduction} for a detailed discussion on such random variables and associated results. 

\subsection{Moment Bounds}

\begin{lemma}\label{lemma:psi_1_norm}  Let $\{ A_j \}_{j\in [m]} \subseteq \RR$ be $m$ sub-exponential random variables.  Let  $\Phi =\max_{j\in[m]}  \| A_j \|_{\psi_1}  $, then   $m^{-1} \sum_{j\in [m]} A_j$ is a sub-exponential random variable with $\psi_1$-norm bounded by $\Phi$.
\end{lemma}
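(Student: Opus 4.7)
The plan is to work directly from the definition of the $\psi_1$-norm given in \eqref{eq:psi_2}, using only Minkowski's inequality in $L^p$; no independence is needed. First I would fix an arbitrary $p \geq 1$ and write
\[
\biggl( \EE \Bigl| m^{-1} \sum_{j \in [m]} A_j \Bigr|^p \biggr)^{1/p} \leq m^{-1} \sum_{j \in [m]} \bigl( \EE |A_j|^p \bigr)^{1/p}
\]
by the triangle inequality for the $L^p$-norm.

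Next, from the definition $\|A_j\|_{\psi_1} = \sup_{p \geq 1} \{ p^{-1} (\EE |A_j|^p)^{1/p} \}$, I have $(\EE |A_j|^p)^{1/p} \leq p \cdot \|A_j\|_{\psi_1} \leq p \Phi$ for each $j \in [m]$. Substituting this into the previous display yields
\[
\biggl( \EE \Bigl| m^{-1} \sum_{j \in [m]} A_j \Bigr|^p \biggr)^{1/p} \leq m^{-1} \cdot m \cdot p \Phi = p \Phi .
\]
Dividing both sides by $p$ and taking the supremum over $p \geq 1$ shows that $\|m^{-1} \sum_j A_j\|_{\psi_1} \leq \Phi$, as required.

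There is essentially no obstacle here — the statement is a direct consequence of Minkowski's inequality combined with the definition of the Orlicz $\psi_1$-norm. The only subtlety worth flagging is that the conclusion holds without any independence or identically-distributed assumption on the $A_j$, since we are bounding the $L^p$-norm rather than exploiting any variance cancellation; if independence were assumed, one could of course obtain a Bernstein-type improvement, but that is not needed (nor claimed) here.
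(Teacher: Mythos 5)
Your proof is correct, and it is in fact cleaner than the one in the paper. The paper bounds $\EE|B|^p$ by expanding the $p$-th power into a $p$-fold sum $\sum_{j_1,\ldots,j_p}\EE|A_{j_1}\cdots A_{j_p}|$ and then applying the AM--GM inequality to each product; this expansion implicitly requires $p$ to be a positive integer (the supremum defining $\|\cdot\|_{\psi_1}$ runs over all real $p \geq 1$, so strictly speaking the paper's argument needs an extra interpolation step). You instead apply Minkowski's inequality directly in $L^p$, which is valid for every real $p \geq 1$ with no expansion, and then substitute the moment bound $(\EE|A_j|^p)^{1/p} \leq p\Phi$ that follows immediately from the definition of the $\psi_1$-norm. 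Both routes deliver the identical bound $\|m^{-1}\sum_j A_j\|_{\psi_1} \leq \Phi$ without using independence, but yours is shorter and avoids the integer-$p$ caveat. Your closing remark is also apt: neither approach exploits cancellation, so independence buys nothing here at the level of the $\psi_1$-norm, and indeed a Bernstein-type refinement would be a different (stronger) statement.
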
 
\begin{proof}
For simplicity, let $B = m^{-1} \sum_{j \in [m]} A_j $.  
By the definition of $\psi_1$-norm, we bound  $\EE (| B|^p)$ for any $p \geq 1$. By the triangle inequality and the AM-GM inequality, we have 
\#
& \EE (| B|^p)  \leq  \frac{1}{m^p}  \sum_{j_1, j_2, \ldots, j_p}  \EE \bigl ( | A_{j_1} A_{j_2}   \cdots A_{j_p} | \bigr )\label{eq:some_inequality1} \\
 &\qquad \leq \frac{1}{m^p\cdot p}  \sum_{j_1, j_2, \ldots, j_p} \bigl [  \EE (  | A_{j_1}|  ^p ) + \ldots + \EE (  | A _{j_p}|  ^p ) \bigr ], \label{eq:some_inequality2}
\#
where \eqref{eq:some_inequality1} follows from the triangle inequality, and \eqref{eq:some_inequality2} follows from the GM-AM inequality. Moreover, since $\| A_j \|_{\psi_1} \leq \Psi$ for any $j \in [m]$, we have 
\#\label{eq:some_ineq3}
\EE ( |A_j | ^p) \leq  ( p \cdot \Phi)^p,~~\text{for any} ~~p \geq 1.
\#
Thus, combining \eqref{eq:some_inequality1}, \eqref{eq:some_inequality2}, and \eqref{eq:some_ineq3}, we obtain that 
$
p^{-1} \cdot   [ \EE  (| B|^p)  ]^{1/p}   \leq  \Phi
$
for any $p\geq 1$, i.e., $\| B\|_{\psi_1} \leq \Phi$. Therefore,  we conclude Lemma \ref{lemma:psi_1_norm}. 
\end{proof}

\subsection{Concentration Results} \label{sec:concentration_results}
Since the response $Y $ in the additive index model is  sub-exponential, we also need to consider concentration results involving sub-exponential random variables. 
For convenience of the reader,  we  first briefly recall the present a result on the concentration of polynomial functions of  sub-Gaussian random vectors in  \cite{adamczak2015concentration}, which is applied  in Lemma \ref{lemma:sum_sub_exponential} below. Recall that, $X$ is sub-Gaussian if its $\psi_2$-norm is bounded. 

Moreover, in the following, we introduce a norm for tensors, which will  be used in the concentration results. 
 Let $\ell \in \NN_{+}  $ be a positive integer. We denote by $\cP_{\ell}$ the set of its partitions of $[ \ell]$  into non-empty and non-intersecting disjoint sets. Moreover, let $A = (a_{\ib})_{\ib \in [n]^\ell}$  be a tensor of order-$\ell$, whose entries are of the form 
 \$  a_{\ib} = a_{i_1, i_2, \ldots, i_{\ell}},~~\text{where}~~{\ib} = ( i_1, i_2, \ldots, i_{\ell} ).\$ 
 Finally, let  $\cJ = \{J_1, \ldots, J_k\} \in \cP_{\ell}$ be a fixed partition of $[\ell]$, where $J_j \subseteq [ \ell]$ for each $ j \in [k]$.  Let $|\cJ|$  denote the  cardinality of the   $\cJ$, which is equal to $k$.  We define a norm $\| \cdot \|_{\cJ}$ by 
\#\label{eq:CJ_norm}
\|A \|_{\cJ} = \sup\Bigg   \{\sum_{\ib \in [n]^\ell} a_{\ib} \prod_{j = 1}^k \bx^{(j )}_{{\ib}_{J_j}} : \bigl \| \bx^{(j)}   \bigr \| _2 \leq 1, \bx^{(j) } \in \RR^{n^{|J_j|}},  1 \leq j  \leq k \Bigg\},
\#
where we write  $\ib_I = (i_k)_{k \in I}$ for any $I \subseteq [ \ell]$ and the supremum is taken over all possible  $k$ vectors $\{ \bx^{(1)}, \ldots, \bx^{(k)}\} $. Here each $ \bx^{(j )} $ in \eqref{eq:CJ_norm} is a vector of dimension $n^{| J_j|}$ with Euclidean norm no more than one. Suppose $J_j = \{ t_1, t_2, \ldots, t_\alpha \} \subseteq [ \ell] $, then the $\ib_{J_j}$-th entry of $\bx^{(j)}$ is 
\$
\bx^{(j)}_{{\ib}_{J_j}}  = \bx^{(j)}_{i_{t_1}, i_{t_2}, \ldots, i_{t_{\alpha}} }.
\$    

 The norm defined in \eqref{eq:CJ_norm} is a generalization of some commonly seen  vector  and matrix norms. For example, when $\ell = 1$, \eqref{eq:CJ_norm} is reduced to the Euclidean norm of vectors in $\RR^n$.
 In addition,  let $A \in \RR^{n \times n}$ be a matrix, then $\cJ$ is a partition of $\{ 1, 2\}$, which implies that $\cJ $ is either $    \{ [2] \}   $ or $  \{ \{ 1 \}, \{ 2 \} \}.$  By the definition in \eqref{eq:CJ_norm}, we have 
 \$
 \| A \|_{\{[2] \}} = \sup \biggl \{  \sum_{i,j  \in [n]} a_{ij} x_{ij} \colon \sum_{ij \in [n] } x_{ij}^2 \leq 1 \biggr  \} = \| A \| _{F},
 \$
 which recovers the matrix Frobenius norm. Moreover, when $\cJ =  \{ \{ 1 \}, \{ 2 \} \}$, we have 
 \$
  \| A \|_{\{ 1\},\{ 2 \}} = \sup  \biggl \{  \sum_{i,j  \in [n]} a_{ij} x_{i} y_j  \colon \sum_{i \in [n] } x_{i} ^2 \leq 1 , \sum _{j \in [n] } y_j^2 \leq 1 \biggr  \} = \| A \| _{\oper},
 \$
 which is the operator norm of $A$.   Based on the norm $\| \cdot \|_{\cJ}$ defined in \eqref{eq:CJ_norm}, we introduce a concentration result for polynomials  of sub-Gaussian random vectors, which is  a simplified   version of Theorem 1.4 in  \cite{adamczak2015concentration}.

\begin{theorem}[Theorem 1.4 \cite{adamczak2015concentration}] \label{thm:subgauss_poly} Let $X = (X_1, \ldots, X_n) \in \RR^n$ be a random vector with independent components. Moreover, we assume that  such that for all $i \in  [n]$, we have $\|X_i\|_{\psi_2} \leq \Upsilon$, where the $\psi_2$-norm is defined in \eqref{eq:psi_2}. Then for every polynomial $F : \RR^n  \rightarrow \RR$ of degree $L$, we have:
$$
\PP\bigl ( \bigl | F(X) - \EE [ F(X) ]  \bigr |\geq t \bigr ) \leq 2 \exp\bigl [  - K_{L} \cdot \eta_{F} (t) \bigr ],
$$
where the univariate function $\eta_{F}(t) $ is given by
\#\label{eq:error_growing_fun}
\eta_{F} (t) = \min _{1\leq \ell \leq L } \min_{\cJ \in \cP_\ell}  \biggl [ \frac{t}{\Upsilon^\ell     \cdot  \bigl \|\EE [  D^\ell F(X)] \bigr \|_{\cJ} } \biggr ]^{2/ |\cJ|}.
\#
Here $\| \cdot \| _{\cJ}$ is defined in \eqref{eq:CJ_norm}, $\| \cdot \|_{p}$ is the $\ell_p$-norm of a random variable, and $D^\ell  F (\cdot ) $ is the $\ell$-th derivative of $F$, which is takes values in the $\ell$-th order  tensors.
\end{theorem}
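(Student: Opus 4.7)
The plan is to establish this polynomial concentration inequality by first proving sharp $L^p$-moment estimates for $F(X) - \EE[F(X)]$ in the spirit of Latala, and then converting moment bounds to tail probabilities via Markov's inequality, optimizing the moment order as a function of $t$. Since $F$ is a polynomial of degree $L$, Taylor expansion around the origin gives
$$
F(X) - \EE[F(X)] \;=\; \sum_{\ell=1}^L \frac{1}{\ell!}\bigl[\bigl\langle D^\ell F(0),\, X^{\otimes \ell}\bigr\rangle - \EE\bigl\langle D^\ell F(0),\, X^{\otimes \ell}\bigr\rangle\bigr],
$$
which decomposes the centered polynomial into a sum of multi-linear forms (generalized chaoses) of orders $1$ through $L$ in the independent sub-Gaussian coordinates of $X$. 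Because the target bound is expressed in terms of $\EE[D^\ell F(X)]$ rather than $D^\ell F(0)$, I would reindex these chaoses using repeated integration-by-parts (Stein-type) identities for the product sub-Gaussian law, so that the coefficient tensor of the order-$\ell$ chaos becomes precisely $\EE[D^\ell F(X)]$ up to lower-order chaos contributions that get absorbed into smaller $\ell$.

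The heart of the argument is a sub-Gaussian Latala-type moment inequality for multi-linear forms: for a symmetric $\ell$-tensor $A$ and independent mean-zero $\psi_2$-bounded coordinates $X_i$ with $\|X_i\|_{\psi_2} \leq \Upsilon$,
$$
\bigl\| \bigl\langle A, X^{\otimes \ell}\bigr\rangle - \EE\bigl\langle A, X^{\otimes \ell}\bigr\rangle\bigr\|_p \;\leq\; C_\ell \sum_{\cJ \in \cP_\ell} \Upsilon^\ell \cdot p^{|\cJ|/2} \cdot \|A\|_{\cJ}
$$
for every $p \geq 2$. To prove this I would first decouple via a Bourgain--Tzafriri argument, reducing to tetrahedral (off-diagonal) sums with independent copies on different indices, then invoke either hypercontractivity or a modified log-Sobolev inequality for product sub-Gaussian measures to obtain the $p^{|\cJ|/2}$ growth. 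The partition structure $\cJ$ arises naturally: conditioning on a subset of coordinates collapses the chaos of order $\ell$ into chaos of lower order, and the norm $\|\cdot\|_{\cJ}$ is precisely the Banach-space seminorm compatible with that conditioning, so a Gaussian-to-sub-Gaussian comparison plus covariance chaining yields each partition term.

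Once the moment inequality is in hand, I combine the $\ell = 1, \ldots, L$ bounds and apply the standard device $\PP(|Z| \geq e \|Z\|_p) \leq e^{-p}$ with $Z = F(X) - \EE[F(X)]$. Given a tail level $t$, I choose
$$
p^\ast \;=\; \min_{1 \leq \ell \leq L}\; \min_{\cJ \in \cP_\ell}\;\Bigl(\frac{t}{\Upsilon^\ell\,\bigl\|\EE[D^\ell F(X)]\bigr\|_\cJ}\Bigr)^{2/|\cJ|},
$$
which is exactly $\eta_F(t)$; absorbing the $\ell$-dependent constants into $K_L$ then gives the claimed exponential bound $2\exp[-K_L \eta_F(t)]$.

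The main obstacle is the sub-Gaussian Latala moment inequality itself. Even in the Gaussian case the proof is intricate, requiring chaining arguments on the unit spheres that parametrize the partition-indexed factorizations of $A$, together with careful control of exponential integrability in each block. Extending to sub-Gaussian coordinates introduces two additional difficulties: first, one must decouple diagonal contributions (because $X_i^2$ is not centered or sub-Gaussian in the same sense), which is handled by a combinatorial reduction to tetrahedral sums; second, one must transfer the Gaussian concentration to the product $\psi_2$ setting, for which either a log-Sobolev / entropy method or a transportation-cost inequality is needed to keep the partition-indexed tensor norms on the right-hand side. A secondary subtlety, peculiar to the present formulation, is that $D^\ell F(X)$ is random; handling this requires iterating the integration-by-parts reduction so that only deterministic expectations $\EE[D^\ell F(X)]$ appear in the final bound.
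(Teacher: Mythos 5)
The paper does not prove this theorem; it is quoted, with minor simplification, directly from Adamczak and Wolff (2015), so there is no in-paper argument to compare your sketch against. Your outline nonetheless tracks the broad strategy of the cited work: establish $L^p$-moment estimates for centered polynomials of sub-Gaussian vectors in terms of the partition-indexed norms $\|\cdot\|_{\cJ}$, then convert to tail bounds by optimizing the moment order, which produces exactly the quantity $\eta_F(t)$.

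Two places in your sketch, however, paper over the real difficulties. First, the proposed reindexing from $D^\ell F(0)$ to $\EE[D^\ell F(X)]$ via ``Stein-type integration-by-parts identities for the product sub-Gaussian law'' is not available in the claimed generality: exact Stein identities characterize Gaussian measure, and the clean Hermite-type re-expansion you are implicitly invoking simply does not exist for an arbitrary product $\psi_2$ law. Adamczak and Wolff avoid this by running the entropy (modified log-Sobolev) method directly on $F(X)$, so that the expectations $\EE[D^\ell F(X)]$ appear naturally from iterated differential inequalities rather than from an algebraic re-expansion of the chaos decomposition. Second, the partition-dependent moment growth $p^{|\cJ|/2}$ is precisely the hard content of Lata{\l}a's theorem even in the Gaussian case---hypercontractivity, which you offer as an alternative to log-Sobolev, only delivers the cruder degree-dependent rate $p^{\ell/2}$ and cannot by itself yield the $|\cJ|$-indexed exponents---and extending Lata{\l}a's chaining argument beyond Gaussian coordinates is the central technical achievement of the result you are sketching. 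In short, your outline has the right shape, but both load-bearing steps are deferred to machinery whose existence is essentially what Theorem~1.4 asserts.
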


Based on this theorem, we  are ready to introduce a concentration inequality for the product of two sub-exponential random variables. This inequality might be of independent interest.
\begin{lemma} 
    \label{lemma:sum_sub_exponential}
    Let $\{ (X_i, Y_i) \}_{i \in [n]}$   be $n$ independent copies of random variables $X$ and $Y$. We assume that $X$ is a sub-Gaussian random variable with $\| X \|_{\psi_2} \leq \Upsilon_1$, and $Y$ is a sub-exponential random variable with   $\| Y\|_{\psi_1} \leq \Upsilon_2$ for some constants $\Upsilon_1$ and $\Upsilon_2$. Here the $\psi_1$- and $\psi_2$-norms are defined in \eqref{eq:psi_2}.  
    Then for any $t \geq K \cdot \max\{  \Upsilon_1 ^3, \Upsilon_1 \}   \cdot \Upsilon_2$, we have
    \#
        & \PP \biggl \{ \bigg|\frac{1}{n}\sum_{i=1}^n \big[X_i^3 \cdot Y_i - \EE(X^3 Y)\big]\bigg|  \geq t \biggr \} \notag \\
         & \qquad  \leq 4 \exp \biggl \{  - K_1 \cdot \min \biggl[ \biggl ( \frac{t}{\sqrt{n} \Upsilon_1^3 \cdot \Upsilon_2} \biggr)^{2}, \biggl ( \frac{t}{\Upsilon_1^3 \cdot \Upsilon_2} \biggr )^{2/5} \biggr] \biggr \},\label{eq:third_order_concentration} \\
        &  \PP \biggl \{ \bigg|\frac{1}{n}\sum_{i=1}^n \big[X_i  \cdot Y_i - \EE(X  Y)\big]\bigg|  \geq t \biggr \} \notag \\
        & \qquad  \leq 4 \exp \biggl \{  - K_2\cdot \min \biggl[ \biggl ( \frac{t}{\sqrt{n} \Upsilon_1  \cdot \Upsilon_2} \biggr)^{2}, \biggl ( \frac{t}{\Upsilon_1 \cdot \Upsilon_2} \biggr )^{2/3} \biggr] \biggr \}. 
         \label{eq:third_order_concentration2}
    \#
   Here $K$,  $K_1$ and $K_2$ are absolute constants.
\end{lemma}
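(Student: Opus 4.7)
The plan is to invoke Theorem~\ref{thm:subgauss_poly} after rewriting each sub-exponential factor $Y_i$ as a low-degree polynomial in sub-Gaussian variables. Concretely, decompose $Y_i = \epsilon_i W_i^2$ with $\epsilon_i = \mathrm{sign}(Y_i)\in\{-1,+1\}$ and $W_i = |Y_i|^{1/2}$. Because $\|W_i\|_{\psi_2}^2 = \||Y_i|\|_{\psi_1} \leq \Upsilon_2$, the factor $W_i$ is sub-Gaussian with $\psi_2$-norm at most $\sqrt{\Upsilon_2}$. After conditioning on the signs $(\epsilon_i)_{i\in[n]}$ (first symmetrizing $Y_i$ by an independent copy $Y_i'$, if needed, to make $W_i$ conditionally independent of $\epsilon_i$), the $2n$ coordinates $(X_i,W_i)_{i=1}^n$ are jointly independent sub-Gaussians with uniform $\psi_2$-bound $\Upsilon:=\max\{\Upsilon_1,\sqrt{\Upsilon_2}\}$, and
\[
F(X,W) \;=\; \frac{1}{n}\sum_{i=1}^n \epsilon_i\, X_i^3 W_i^2
\]
is a polynomial of degree $L=5$ in these $2n$ sub-Gaussians. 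Theorem~\ref{thm:subgauss_poly} then furnishes a conditional tail bound of the form $2\exp(-K_5\,\eta_F(t))$ with $\eta_F(t) = \min_{\ell\in[5],\,\cJ\in\cP_\ell}[t/(\Upsilon^\ell \|\EE[D^\ell F]\|_\cJ)]^{2/|\cJ|}$.

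The next step is to bound $\|\EE[D^\ell F]\|_\cJ$ for each $(\ell,\cJ)$. Because each summand involves only the pair $(X_i,W_i)$, the tensor $D^\ell F$ is supported on a generalized diagonal indexed by a single $i$: every mixed derivative that touches two distinct indices vanishes. Each partition-norm therefore reduces to a sum of $n$ rank-one contributions weighted by $1/n$, multiplied by low-order moments of $X$ and $W$ (controlled by $\Upsilon_1$ and $\Upsilon_2$). Two extreme partitions dominate the minimum in $\eta_F$: the coarsest partition $\cJ=\{[\ell]\}$ at $\ell=1$ (so $|\cJ|=1$) collapses the gradient to a Frobenius-type norm of order $\sqrt{n}/n=1/\sqrt{n}$, producing the Gaussian rate $(t/(\sqrt{n}\,\Upsilon_1^3\Upsilon_2))^2$; the finest partition $\cJ=\{\{1\},\ldots,\{5\}\}$ at $\ell=5$ (so $|\cJ|=5$) produces an operator-type norm of order $1/n$ and yields the heavy-tail rate $(t/(\Upsilon_1^3\Upsilon_2))^{2/5}$. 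Intermediate partitions with $|\cJ|\in\{2,3,4\}$ are, after elementary case-checking, dominated by one of these two extremes up to absolute constants. Since the conditional bound is invariant under the $\pm 1$ signs $\epsilon_i$ (which flip only the sign of diagonal entries and are invisible to the partition norms), taking expectation over $(\epsilon_i)$ preserves the inequality and gives \eqref{eq:third_order_concentration}. The companion bound \eqref{eq:third_order_concentration2} follows from the identical argument applied to $\frac{1}{n}\sum_i \epsilon_i X_i W_i^2$, now a polynomial of degree $L=3$, so the extreme partitions produce the exponents $2$ and $2/3$.

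The main obstacle is the partition-norm bookkeeping at $L=5$: the Bell number $B_5=52$ counts the partitions over which Theorem~\ref{thm:subgauss_poly} optimizes, and a careful case analysis is required to confirm that the two extremal partitions identified above dominate uniformly in $(\epsilon_i)$ and in the relative sizes of $\Upsilon_1$ and $\Upsilon_2$. A secondary subtlety is the centering: Theorem~\ref{thm:subgauss_poly} provides concentration around the conditional mean $\EE[F\mid(\epsilon_i)]$ rather than the target $\EE(X^3Y)$, so one must verify that their difference — a sub-Gaussian-type sum in the Rademacher-like variables $\epsilon_i$ — is of strictly lower order and can be absorbed into the absolute constant $K_1$ via an application of Hoeffding's inequality to $n^{-1}\sum_i \epsilon_i$.
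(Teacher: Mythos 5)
Your overall framework—transform the sub-exponential factor into a sub-Gaussian via a fractional power, then feed a low-degree polynomial of independent sub-Gaussians into Theorem~\ref{thm:subgauss_poly}—is the same as the paper's. But the specific decomposition you chose runs into a problem the paper's decomposition was designed to avoid.

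\textbf{The main gap: the coordinates you feed into Theorem~\ref{thm:subgauss_poly} are not independent.} You write $Y_i = \epsilon_i W_i^2$ with $W_i = |Y_i|^{1/2}$ and then treat the $2n$ coordinates $(X_i, W_i)_{i\in[n]}$ as ``jointly independent sub-Gaussians.'' The symmetrization you propose fixes the dependence between $\epsilon_i$ and $W_i$, but it does \emph{nothing} about the dependence between $X_i$ and $W_i$. The lemma only assumes the \emph{pairs} $(X_i,Y_i)$ are i.i.d.\ across $i$; within a pair, $X_i$ and $Y_i$ may be arbitrarily dependent (and in the paper's application they certainly are—$Y_i$ is a response built from $\langle X^{(i)},\beta_j^*\rangle$). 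Theorem~\ref{thm:subgauss_poly} requires a random vector with \emph{independent components}, so you cannot invoke it on the vector $(X_1,W_1,\ldots,X_n,W_n)$. The paper sidesteps this entirely by absorbing the whole product into a single variable: it sets $Z_i = \eta_i |A_i^{+}|^{1/5}$ where $A_i^{\pm}$ are the positive/negative parts of $X_i^3 Y_i$. Because $(X_i,Y_i)$ are i.i.d.\ across $i$, so are the $Z_i$, and no within-$i$ independence is ever needed.

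\textbf{Two secondary issues, even if one were to grant $X\perp Y$.} First, the polynomial $F(z)=\sum_i z_i^5$ in the paper's proof has a \emph{diagonal} derivative tensor at every order $\ell$, so the partition norms collapse to the explicit formula \eqref{eq:cj_norm1} and the Bell-number case analysis you worry about is a non-issue. Your $F(X,W)=\frac{1}{n}\sum_i\epsilon_i X_i^3 W_i^2$ has only \emph{block-diagonal} derivatives in $\RR^{2n}$, and you defer the required case analysis. Second, Theorem~\ref{thm:subgauss_poly} as stated uses a single uniform $\psi_2$-bound $\Upsilon$, so with $\Upsilon=\max\{\Upsilon_1,\sqrt{\Upsilon_2}\}$ the term $\Upsilon^{\ell}\|\EE D^\ell F\|_{\cJ}$ scales like $\max\{\Upsilon_1^5,\Upsilon_2^{5/2}\}$ at $\ell=5$ rather than $\Upsilon_1^3\Upsilon_2$; to recover the stated dependence one must first rescale each coordinate family separately (replace $X_i$ by $X_i/\Upsilon_1$, $W_i$ by $W_i/\sqrt{\Upsilon_2}$, and pull the factor $\Upsilon_1^3\Upsilon_2$ out of $F$), a step that is missing. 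In contrast, the paper's Hölder computation \eqref{eq:z_bound}--\eqref{eq:z_psi2} delivers $\|Z_i\|_{\psi_2}\lesssim\Upsilon_1^{3/5}\Upsilon_2^{1/5}$ directly, so $\|Z_i\|_{\psi_2}^5\sim\Upsilon_1^3\Upsilon_2$ appears with no extra bookkeeping. The exponent $2/5$ in the paper likewise falls out immediately because the relevant polynomial is a pure fifth power of one coordinate.

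In short: the approach is the right family of ideas, but the choice to keep $X_i$ and the transformed $W_i$ as separate coordinates is precisely what the paper's proof is engineered to avoid, and without $X\perp Y$ the argument cannot go through as written.
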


\begin{proof} 
We first establish \eqref{eq:third_order_concentration}.
For any $i \in [n]$, we  define a random variable $A_i^+$ as the positive part of $X_i^3 \cdot Y_i$ and let $A_i^{-}$ be the negative part. That is, we let  $A_i^+ = X_i^3 Y_i \cdot \ind  \{  X_i^3 \cdot Y_i \geq 0\}$ and   $A_i^- = A_i^+ -  X_i^3 \cdot Y_i$.  By these definitions, we have  $X_i^3 \cdot Y_i = A_i^+ - A_i^-$.
In the following, we establish upper bounds for 
\#\label{eq:k_upsilon}
\PP \biggl ( \biggl | \frac{1}{n} \sum_{i=1}^n  A_i^+ - \EE(A_i^+) \bigg| \geq t \biggr )\quad \text{and} \quad \PP \biggl ( \biggl | \frac{1}{n} \sum_{i=1}^n  A_i^- - \EE(A_i^-) \bigg| \geq t \biggr )
\#
for any $t \geq K\cdot  \Upsilon_1^3 \cdot \Upsilon_2$, where $K > 0$ is some absolute constant that will be specified later.
Note that, by symmetry, it  suffices to bound the first term in \eqref{eq:k_upsilon}.
Our proof utilizes the concentration inequality for polynomials of sub-Gaussian random variables, which is  given in Theorem \ref{thm:subgauss_poly}. 
To proceed, we first define  $n$ random variables $Z_i = \eta_i|A_i^+|^{1/5}$ for   $i \in [n]$, where $\{ \eta_i\}_{i\in [n]} $ are $n$ independent  Rademacher random variables.  We show that $\{ Z_i\}_{i\in [n]}$ are  $n$  i.i.d. sub-Gaussian random variables. Notice that by definition, we have    $| Z_i | =  | A_i^{+} | ^{1/5} \leq | X_i^3 \cdot Y_i | ^{1/5}.$ By H\"older's  inequality, for any integer $p \geq 1$, we have 
\#\label{eq:holder_psi1}
\EE (|Z_i|^p) \leq\EE\big ( |X_i|^{3 p/5} \cdot |Y_i|^{p/5}\big ) \leq  \bigl [ \EE (  |X_i|^{3p/2 } ) \bigr ]^{2/5} \cdot \bigl [ \EE (  |Y_i|^{p/3}) \bigr ] ^{3/5}.
\#
In addition, by the definition of the $\psi_2$-norm, we have 
\#\label{eq:use_x_bound}
 \EE (  |X_i|^{3 p/2} )   \leq  \bigl ( \sqrt{3p/2} \cdot \| X  \|_{\psi_2} \bigr ) ^{3p / 2}  \leq  ( 3p /2)^{3p /4 }\cdot \Upsilon_1 ^{3p/2},
\#
where we use the fact that $\| X  \|_{\psi_2}  \leq \Upsilon_1$. 
Similarly, for $Y_i$, by the definition of the $\psi_1$-norm, we have 
\#\label{eq:use_y_bound}
\EE \bigl ( | Y_i |^{p/3}  \bigr ) \leq  \bigl (p /3  \cdot \| Y_i \|_ {\psi_1}  \bigr )^{p/3 } \leq ( p / 3) ^{p/3 } \cdot \Upsilon_2 ^{p / 3}, 
\#
where $\Upsilon_2$ is an upper bound for $\| Y \|_{\psi_1}$. 
Combining \eqref{eq:holder_psi1}, \eqref{eq:use_x_bound}, and \eqref{eq:use_y_bound},  we obtain
\#\label{eq:z_bound}
\bigl [ \EE ( |Z_i|^p) \bigr ] ^{1/p} & \leq \bigl [ ( 3p /2)^{3p /10  } \cdot  ( p /3)^{p/5} \cdot \Upsilon_1 ^{3p /5} \cdot \Upsilon_2 ^{p/5} \bigr ] ^{1/p} \\ & \leq \sqrt{ 3p/2} \cdot  \Upsilon_1^{3/5}\cdot \Upsilon_2^{1/5} . \nonumber
\#
Hence, by the  definition of $\psi_2$-norm and \eqref{eq:z_bound}, we have
\#\label{eq:z_psi2}
\|Z_i\|_{\psi_2}= \sup_{p\ge 1}\bigl \{  p^{-1/2}\bigl [ \EE ( |Z_i|^p) \bigr ] ^{1/p} \bigr \}\leq\sqrt{ 3/2} \cdot  \Upsilon_1^{3/5}\cdot \Upsilon_2^{1/5},
\# 
which implies that $Z_i$ is a sub-Gaussian random variable with $\psi_2$-norm bounded by a constant depending on $\Upsilon_1$ and $\Upsilon_2$.\\
\noindent In the rest of the proof of \eqref{eq:third_order_concentration}, we establish a concentration inequality for $\{ Z_i^5\}_{i \in [n]}$. To apply Theorem \ref{thm:subgauss_poly}, we let  $f(u) = u^5$ and  define $F \colon \RR^n \rightarrow \RR$ by $F(z ) = \sum_{i = 1}^n f(z _i)$. 
Then by definition, the high-order derivatives of $F$ are diagonal tensors whose only nonzero entries are diagonal.  Specifically, for any $\ell \in [5]$ and any $i_1, i_2, \ldots, i_{\ell} \in [n]$, we have 
\$
\bigl [ D^\ell F(z ) \bigr ]_{i_1, \ldots, i_\ell}  =  \ind \{ i_1 = i_2 = \cdots = i_\ell \} \cdot f^{(\ell)}  ( z _{i_1}),
\$
where $z$ is  an arbitrary vector in $ \RR^n$. 
To simplify the notation,  for any   $a_1, \ldots, a_n \in \RR$, we denote  by 
$
\diag_\ell\{a_1,\ldots, a_n\}   
$
the $\ell$-th order  diagonal tensor with diagonal entries $a_1, a_2, \ldots, a_n$. Using this notation, for any $\ell \in [5]$, 
we can write 
 \#\label{eq:cj_norm0}
 D^\ell F(z ) = \diag_{\ell} \bigl [  f^{(\ell)}(z_1), \ldots, f^{(\ell)}(z _n)\bigr ].
\#
Moreover, for diagonal tensors, the norm  $\| \cdot \|_{\cJ}$ defined in \eqref{eq:CJ_norm}  have simple forms.
For any $a \in \RR^n$ and any  integer $\ell \geq 1 $, we have 
\#\label{eq:cj_norm1}
\|\diag_\ell\{a _1,\ldots, a _n\}\|_{\cJ} = \ind  \{ |\cJ| = 1 \}  \cdot \| a \|_2  + \ind \{ |\cJ| \geq 2 \} \cdot  \|a \|_{\max}.
\#
In addition, note that  $| f^{(\ell)}(u)|  \leq 5  ! \cdot  |u| ^{5-\ell}$ for any $u \in \RR$. For i.i.d. random variables $\{ Z_i \}_{i\in [n]}$, we define  $Z = (Z_1, \ldots, Z_n)^{\top}$. Combining \eqref{eq:cj_norm0} and \eqref{eq:cj_norm1},  we obtain 
\#\label{eq:cj_norm2}
\bigl \|\EE [ D^\ell F(Z) ] \bigr \|_{\cJ} & = \ind \{ |\cJ| = 1 \} \cdot \sqrt{n} \cdot  \bigl | \EE   [ f^{(\ell)} (Z_1 ) ]  \bigr | + \ind \{ |\cJ| \geq 2 \} \bigl | \EE [  f^{(\ell)} (Z_1) ]  \bigr |. 
\# 
Now we are ready to apply Theorem \ref{thm:subgauss_poly}. The  function $\eta_{F} (t) $ defined in \eqref{eq:error_growing_fun} now becomes
\#
\label{eq:compare_ell_cases}
\eta_{F} (t) = \min_{\ell \in [5] }\min_{\cJ \in \cP_{\ell} } \Bigl \{ t \cdot \| Z_1 \|_{\psi_2}^{- \ell}    \cdot  \bigl \|\EE [ D^\ell F(Z)] \bigr \|_{\cJ} ^{-1}  \Bigr \}^{2/ |\cJ|}.
\#
Note that the tail probability in Theorem \ref{thm:subgauss_poly} is equal to $\exp[- K \cdot \eta_{F}(t) ]$, where $K$ is an absolute constant. To upper bound this term, in the sequel, we establish an lower  bound for $\eta_{F}(t)$.
We first establish an  upper bound for $\bigl \|\EE [ D^\ell F(Z) ] \bigr \|_{\cJ}  $. Since $f(u) = u^5$, we have  $| f^{\ell} (u)| \leq 5! \cdot | u|^{5 - \ell}$. Thus, by \eqref{eq:cj_norm2} we have
\$
\bigl \|\EE [ D^\ell F(Z) ] \bigr \|_{\cJ}    \leq 5! \cdot  \bigl (\ind  \{ |\cJ| = 1 \} \cdot \sqrt{n} + \ind \{ |\cJ| \geq 2 \} \bigr ) \cdot   \EE  \bigl ( |Z_1|^{5 - \ell} \bigr )     .
\$
Note that $Z_1$ is a sub-Gaussian random variable. By the definition of $\psi_2$-norm in \eqref{eq:psi_2}, we have $\EE (|Z_1|^k)  \leq (\sqrt{k})^{k} \cdot \|Z_1\|^{k}_{\psi_2}$ for any $k \geq 0$, where we follow  the convention by letting $0^0 =1$.
Therefore, by \eqref{eq:cj_norm2} we have 
\begin{align} \label{eq:cj_norm_final}
& \bigl \|\EE [ D^\ell F(Z)] \bigr\|_{\cJ}  \notag \\
& \quad \leq 25  \cdot \big(\sqrt{5-\ell} \big)^{5-\ell}\cdot   \bigl (\ind  \{ |\cJ| = 1 \} \cdot \sqrt{n} + \ind \{ |\cJ| \geq 2 \} \bigr ) \cdot \|Z_1\|^{5- \ell}_{\psi_2},
\end{align}
for any $\ell \in [5 ]$. We denote $C_{\ell} = 25 \cdot (5 - \ell)^{(5 - \ell)/2}$ to simplify the notation, which is an absolute constant. Now we combine \eqref{eq:compare_ell_cases} and \eqref{eq:cj_norm_final} to obtain
\$
\| Z_1 \|_{\psi_2}^{\ell}    \cdot  \bigl \|\EE [ D^\ell F(Z)] \bigr \|_{\cJ } \leq C_{\ell} \cdot \bigl (\ind  \{ |\cJ| = 1 \} \cdot \sqrt{n} + \ind \{ |\cJ| \geq 2 \} \bigr ) \cdot \|Z_1\|^{5}_{\psi_2}. 
\$ 
Plugging this inequality in \eqref{eq:compare_ell_cases},
we have 
\$
\eta_{F} (t) \geq  \min_{\ell \in [5] } \Bigl \{  \min \Bigl [ \bigl ( t / \sqrt{n}  \cdot C_{\ell}^{-1} \| Z_1 \|_{\psi_2}^{- 5 }   \bigr)^ {2} ,  \min_{2 \leq |\cJ| \leq \ell} \bigl(   t \cdot C_{\ell}^{-1} \| Z_1 \|_{\psi_2}^{- 5}   \bigr )^{2/ |\cJ|} \Bigr ]\Bigr \}.
\$ 

\noindent When $t > C_{\ell} \cdot \| Z_1 \|_{\psi_2}^{5}$ for any $\ell \in \{2, \ldots, 5\}$, we simplify  
the above inequality~by   
\#\label{eq:lower_eta_fun2}
\eta_{F} (t) & \geq \min \Bigl \{ \min_{\ell \in [5] } \Bigl [ \bigl ( t / \sqrt{n}  \cdot C_{\ell}^{-1} \| Z_1 \|_{\psi_2}^{- 5 }   \bigr) ^{2} \Bigr ], \min_{2 \leq \ell \leq 5 } \Bigl [   \bigl(   t \cdot C_{\ell}^{-1} \| Z_1 \|_{\psi_2}^{- 5}   \bigr )^{2/ \ell }    \Bigr ]\Bigr \} \notag \\
& = \min \Bigl \{  \bigl ( t / \sqrt{n}  \cdot  \tilde C_{0}^{-1} \| Z_1 \|_{\psi_2}^{- 5 }   \bigr) ^{2} , \min_{2 \leq \ell \leq 5 } \Bigl [   \bigl(   t \cdot C_{\ell}^{-1} \| Z_1 \|_{\psi_2}^{- 5}   \bigr )^{2/ \ell }    \Bigr ]\Bigr \} , 
\#
where we define $\tilde C_0 = \max_{\ell \in [5] } C_{\ell}$.
Note that the $\psi_2$-norm of $Z_1$ is bounded in \eqref{eq:z_psi2}, which implies 
\#\label{eq:universal_psi_bound}
C_{\ell} \cdot \| Z_1 \|_{\psi_2} ^5  \leq K \cdot \Upsilon_1^3 \cdot \Upsilon_2 
\#
for all $\ell \in [5]$, 
where $K$ is some absolute constant. We denote  the term on the right-hand side of \eqref{eq:universal_psi_bound} by $K_{\Upsilon}$ for simplicity. 

Thus, combining \eqref{eq:lower_eta_fun2} and  \eqref{eq:universal_psi_bound}, we have  
\$
\eta_{F}(t)  \geq \min \bigl  \{ \bigl [ t / ( \sqrt{n} \cdot K_{\Upsilon} )  \bigr ] ^2 , \min_{2 \leq \ell \leq 5} (t /  K_{\Upsilon } )^{2/ \ell}  \bigr \}. 
\$
When $t \geq K_{\Upsilon} $, we have 
\#\label{eq:bound_eta_final}
\eta_{F}(t) \geq\min \bigl  \{ \bigl [ t / ( \sqrt{n} \cdot K_{\Upsilon} )  \bigr ] ^2 ,  (t /  K_{\Upsilon } )^{2/ 5}  \bigr \}. 
\#
Plugging \eqref{eq:bound_eta_final} in Theorem \ref{thm:subgauss_poly}, since $|Z_i|^5 = | A_{i}^{+} | = A_{i}^{+}$, when $t \geq 2 K_{\Upsilon}$,  we obtain
\# \label{eq:first_term_tail}
\PP\biggl [ \biggl | \sum_{i=1 }^n  A_i^{+}  - \EE( A_{i}^{+}) \biggr |\geq t/2  \biggr] &  \leq 2 \exp \bigl[ - C\cdot \eta_{F}(t/2) \bigr ] \notag  \\
& \leq 2 \exp  \Bigl ( - C \min \bigl  \{ \bigl [ t / ( 2 \sqrt{n} \cdot K_{\Upsilon} )  \bigr ] ^2 ,   [ t / (2 K_{\Upsilon } ) ] ^{2/ 5}  \bigr \}  \Bigr ),  
\#
where $C$ is an absolute constant that  does not rely on $\Upsilon_1$ and $\Upsilon_2$.
Similarly, for $A_{i}^{-}$, using the same analysis, we obtain that 
\#\label{eq:second_term_tail}
& \PP\biggl [ \biggl | \sum_{i=1 }^n  A_i^{-}  - \EE( A_{i}^{-}) \biggr |  \geq t/ 2 \biggr] \notag \\
&\quad  \leq 2 \exp  \Bigl ( - C \min \bigl  \{ \bigl [ t / ( 2 \sqrt{n} \cdot K_{\Upsilon} )  \bigr ] ^2 ,   [ t / (2 K_{\Upsilon } ) ] ^{2/ 5}  \bigr \}  \Bigr ).
\#
Note that $|a - b| \geq t$ implies that $|a| \geq t/2$ or $|b| \geq t/2$ for any $a, b \in \RR$ and $t > 0$. 
Combining \eqref{eq:first_term_tail} and \eqref{eq:second_term_tail},  we finally obtain
\# \label{eq:111_final}
&  \PP\biggl [ \biggl | \sum_{i=1 }^n  X_i ^3 \cdot Y   - \EE( X^3 \cdot Y) \biggr |\geq t   \biggr] \notag \\
  & \quad \leq \PP\biggl [ \biggl | \sum_{i=1 }^n  A_i^{+}  - \EE( A_{i}^{+}) \biggr |\geq t/2  \biggr]    +\PP\biggl [ \biggl | \sum_{i=1 }^n  A_i^{-}  - \EE( A_{i}^{-}) \biggr |\geq t/ 2 \biggr] \notag \\
&  \quad  \leq 4 \exp \biggl \{  - K_1 \cdot \min \biggl[ \biggl ( \frac{t}{\sqrt{n} \Upsilon_1^3 \cdot \Upsilon_2} \biggr)^{2}, \biggl ( \frac{t}{\Upsilon_1^3 \cdot \Upsilon_2} \biggr )^{2/5} \biggr] \biggr \}, 
\#
where $K_1 $ is an absolute constant. 
Note that, by \eqref{eq:universal_psi_bound},  here we require $t \geq K \cdot  \Upsilon_1^3 \cdot \Upsilon_2$ for some constant $K$. Thus, we establish \eqref{eq:third_order_concentration}.

To conclude the proof of this lemma,  it remains to show \eqref{eq:third_order_concentration2}. The proof is similar to the derivations above. Now, for any $i \in [n]$,  we define $B_{i}^{+}$ and $B_{i}^{-}$ as the positive and negative parts of $X_i \cdot Y_i$, respectively. Then we have $X_i \cdot Y_i = B_{i}^{+} - B_{i}^{-}$ by definition.  

We first establish a concentration inequality for $\{ B_{i}^+ \}_{i\in [n]}$. To utilize  Theorem \ref{thm:subgauss_poly}. 
To proceed, 
we first define    $Z_i = \eta_i|B_i^+|^{1/3}$ for   $i \in [n]$, where $\{ \eta_i\}_{i\in [n]} $ are $n$ independent  Rademacher random variables.  Then we show that $\{ Z_i\}_{i\in [n]}$ are  $n$  i.i.d. sub-Gaussian random variables.  For any integer $p \geq 1$,  H\"older's  inequality implies that 
\#\label{eq:holder_psi12}
\EE (|Z_i|^p) \leq\EE\big ( |X_i|^{p /3} \cdot |Y_i|^{p/3}\big ) \leq  \bigl [ \EE (  |X_i|^{p/2 } ) \bigr ]^{2/3} \cdot \bigl [ \EE (  |Y_i|^{ p }) \bigr ] ^{1/3}.
\#
Moreover, since $\| X \|_{\psi_2} \leq \Upsilon_1$ and $\| Y \|_{\psi_1} \leq \Upsilon_2$,  by the definition of the $\psi_2$- and $\psi_1$-norms, we have 
\#\label{eq:use_xy_bound}
 \EE (  |X_i|^{p/2 } )  & \leq  \bigl ( \sqrt{p/2 } \cdot \| X  \|_{\psi_2} \bigr ) ^{p/2}  \leq  (p/2)^{p/4 }\cdot \Upsilon_1 ^{p/2}, \\
 \EE  ( | Y_i |^{p}   ) &\leq  \bigl ( p  \cdot \| Y_i \|_ {\psi_1}  \bigr )^{ p } \leq  p^{p}\cdot \Upsilon_2 ^{p  }. \nonumber
\#
 Thus, 
combining \eqref{eq:holder_psi12} and \eqref{eq:use_xy_bound},  we obtain
\$
\bigl [ \EE ( |Z_i|^p) \bigr ] ^{1/p}  \leq \bigl [ ( p/2)^{p/6   } \cdot  p^{p/3} \cdot \Upsilon_1 ^{2p /3} \cdot \Upsilon_2 ^{p/3} \bigr ] ^{1/p} \leq \sqrt{ p} \cdot  \Upsilon_1^{1/3}\cdot \Upsilon_2^{1/3} ,
\$
which implies that 
$
\|Z_i\|_{\psi_2} \leq   \Upsilon_1^{1/3}\cdot \Upsilon_2^{1/3}.
$
Thus,  $Z_i$ is a sub-Gaussian random variable with $\psi_2$-norm bounded  by $\Upsilon_1^{1/3}\cdot \Upsilon_2^{1/3}$.

To prove \eqref{eq:third_order_concentration2}, we establish a concentration inequality for $\{ Z_i^3 \}_{i \in [n]}$. Similar to the pervious case, we  let  $f(u) = u^3$ and  define $F \colon \RR^n \rightarrow \RR$ by $F(z ) = \sum_{i = 1}^n f(z _i)$. 
By this construction, for any $\ell \in \{1,2,3\}$, 
the $\ell$-th order derivative of $F$ is given by
 \#\label{eq:cj_norm1}
 D^\ell F(z ) = \diag_{\ell} \bigl [  f^{(\ell)}(z_1), \ldots, f^{(\ell)}(z _n)\bigr ].
\#
Note that here $D^\ell F(z ) $ has the same form as \eqref{eq:cj_norm0}. Similar to  \eqref{eq:cj_norm_final},
we have 
\#\label{eq:some_useless_term}
&\|Z_1\|^{ \ell}_{\psi_2} \cdot  \bigl \|\EE [ D^\ell F(Z)] \bigr\|_{\cJ} \notag \\
& \quad   \leq 3!  \cdot \big(\sqrt{3-\ell} \big)^{3-\ell}\cdot   \bigl (\ind  \{ |\cJ| = 1 \} \cdot \sqrt{n} + \ind \{ |\cJ| \geq 2 \} \bigr ) \cdot \|Z_1\|^{3 }_{\psi_2}  \notag \\
& \quad  \leq K \cdot  \bigl (\ind  \{ |\cJ| = 1 \} \cdot \sqrt{n} + \ind \{ |\cJ| \geq 2 \} \bigr ) \cdot \Upsilon_1 \cdot \Upsilon_2,
\#
where $K$ is a constant that does not depend on $\Upsilon_1$ and $\Upsilon_2$.
Moreover, by \eqref{eq:some_useless_term}, for any $t$ satisfying $t \geq K\cdot \Upsilon_1 \cdot \Upsilon_2 $, 
the  function $\eta_{F} (t) $ defined in \eqref{eq:error_growing_fun} can be lower  bounded by
\#
\label{eq:bound_eta_F}
\eta_{F} (t) 
&   = \min_{\ell \in [3] }\min_{\cJ \in \cP_{\ell} } \Bigl \{ t \cdot \| Z_1 \|_{\psi_2}^{- \ell}    \cdot  \bigl \|\EE [ D^\ell F(Z)] \bigr \|_{\cJ} ^{-1}  \Bigr \}^{2/ |\cJ|} \notag \\
&  s \geq \min \bigl  \{ \bigl [ t  \big/ ( \sqrt{n} \cdot K  \Upsilon_1  \Upsilon_2  )  \bigr ] ^2 ,  \bigl [ t \big /  (  K   \Upsilon_1   \Upsilon_2  ) \bigr ]^{2/ 3}  \bigr \}.
\#
Note that $|Z_i|^3 = | B_{i}^{+} | = B_{i}^{+}$. 
Plugging \eqref{eq:bound_eta_F} in Theorem \ref{thm:subgauss_poly}, for any $t$ satisfying  $t \geq 2 K  \cdot \Upsilon_1 \cdot \Upsilon_2$,  we have
\# \label{eq:first_term_tail1}
& \PP\biggl [ \biggl | \sum_{i=1 }^n  B_i^{+}  - \EE( B_{i}^{+}) \biggr |  \geq t/2  \biggr] \leq 2 \exp \bigl[ - C\cdot \eta_{F}(t/2) \bigr ] \notag \\
& \quad  \leq 2 \exp  \Bigl ( - C \min \bigl  \{ \bigl [ t / ( 2 \sqrt{n} \cdot K  \Upsilon_1  \Upsilon_2 )  \bigr ] ^2 ,   [ t / (2 K  \Upsilon_1  \Upsilon_2 ) ] ^{2/ 3}  \bigr \}  \Bigr ),
\#
where $C$ is an absolute constant that is does not reply on $\Upsilon_1$ and $\Upsilon_2$.
Similarly,    using the same analysis, we obtain a similar concentration inequality for $\{ B_i^{-} \}_{i\in [n]}$:
\#\label{eq:second_term_tail1}
& \PP\biggl [ \biggl | \sum_{i=1 }^n  B_i^{-}  - \EE( B_{i}^{-}) \biggr |\geq t/ 2 \biggr] \notag \\
& \quad \leq 2 \exp  \Bigl ( - C \min \bigl  \{ \bigl [ t / ( 2 \sqrt{n} \cdot K  \Upsilon_1  \Upsilon_2 )  \bigr ] ^2 ,   [ t / (2 K  \Upsilon_1  \Upsilon_2 ) ] ^{2/ 3}  \bigr \}  \Bigr ).
\#
Therefore, combining \eqref{eq:first_term_tail1} and \eqref{eq:second_term_tail1},  we finally obtain that
\# \label{eq:111_final2}
& \PP\biggl [ \biggl | \sum_{i=1 }^n  X_i   \cdot Y   - \EE( X  \cdot Y) \biggr |\geq t   \biggr] \notag s  \\
 & \quad \leq  4    \exp  \Bigl ( - K_2 \cdot  \min \bigl  \{ \bigl [ t / ( 2 \sqrt{n} \cdot  \Upsilon_1  \Upsilon_2 )  \bigr ] ^2 ,   [ t / (2   \Upsilon_1  \Upsilon_2 ) ] ^{2/ 3}  \bigr \}  \Bigr )
\#
where $K_2 $ is an absolute constant.   Note that here we require $t \geq K \cdot \Upsilon_1 \Upsilon _2 $ for some absolute  constant $K > 0$. Therefore,  combining \eqref{eq:111_final} and \eqref{eq:111_final2}, we conclude the proof of Lemma~\ref{lemma:sum_sub_exponential}. 
\end{proof}

\subsection{Net Argument for Tensor Operator Norm}
In this section, we prove the $\epsilon$-net argument for tensor operator norm. 
The $\epsilon$-net argument is a standard technique for bounding the operator norm of matrices, whereas  its construction is  relatively more involved in the tensor case.  

In the sequel, for generality, we focus on $\ell$-th order  tensors in $\RR^d$. For any $\ell$-th order tensor $A\in \RR^{d\otimes \ell} $, the operator norm of $A$ is given by 
\#\label{eq:def_tensor_opernorm}
\| A \|_{\oper} = \sup\bigl \{  \bigl | A(u^{(1)} , \ldots, u^{(\ell)} ) \bigr | \colon  u^{(1)}, u^{(2)}, \ldots, u^{(\ell)} \in \cS^{d-1} \bigr \} , 
\#
where $\cS^{d-1}$ is the unit sphere in $\RR^d$. In addition, for any $r \in [d]$, we define  the $r$-sparse subset of $\cS^{d-1}$    as 
\#\label{eq:sparse_sphere}
\cS^{d-1} (r)  = \bigl \{ u\in \RR^d\colon \| u \|_2 =1, \| u \|_0 \leq r \bigr \}.
\#
Then the sparse tensor norm of $A$ is defined by 
\#\label{eq:def_sparse_tensor_norm}
\| A \|_{\oper, r} = \sup\bigl \{  \bigl | A(u^{(1)} , \ldots, u^{(\ell)} ) \bigr | \colon  u^{(1)}, u^{(2)}, \ldots, u^{(\ell)} \in \cS^{d-1} (r) \bigr \} . 
\#
Moreover, for any set $\cE \subseteq \RR^d$, we say $\cN$  is an $\epsilon$-net for $\cE$, if for any $u \in \cE$, there exists $v \in \cN$ such that $\| u - v \|_2 \leq  \epsilon$.   Then we are ready to present the $\epsilon$-argument for $\ell$-th order tensors, which is given in the following lemma.

    \begin{lemma} [$\epsilon$-net argument for tensors]\label{lem:net}
  \label{lemma:tensor_net}
   For any $\epsilon \in (0, 1/\ell)$, let $\cN_{\cS}(\epsilon)$   and $\cN_{\cS}(\epsilon,r )$ be the  $\epsilon$-nets of $\cS^{d-1}$ and $\cS^{d-1} (r)$, respectively. Then for any   $\ell$-th order tensor $A \in \RR^{d\otimes \ell}$, we have  
\#
   \| A \|_{\oper}    &  \leq   (1 - \ell\cdot  \epsilon   )^{-1} \cdot  \sup\bigl \{  \bigl | A(u^{(1)} , \ldots, u^{(\ell)} ) \bigr | \colon  \{  u^{(j )} \}_{j \in [\ell]} \subseteq  \cN_{\cS}(\epsilon) \bigr \}, \label{eq:tensor_net1}\\
  \| A \|_{\oper, r }  &  \leq    (1 - \ell\cdot  \epsilon   )^{-1} \cdot  \sup\bigl \{  \bigl | A(u^{(1)} , \ldots, u^{(\ell)} ) \bigr | \colon   \{  u^{(j )} \}_{j \in [\ell]} \subseteq  \cN_{\cS}(\epsilon, r) \bigr \}. \label{eq:tensor_net2}
 \#
 Moreover, when $A$ is a symmetric tensor, we further have 
 \#
 \| A \|_{\oper} & \leq (1 - \ell\cdot  \epsilon  )^{-1}    \cdot  \sup_{u \in \cN_{\cS}(\epsilon) }   \bigl | A(u, u, \ldots, u ) \bigr |, \label{eq:tensor_net11} \\
  \| A \|_{\oper,r } & \leq (1 - \ell\cdot  \epsilon  )^{-1}    \cdot  \sup_{u \in \cN_{\cS}(\epsilon, r) }   \bigl | A(u, u, \ldots, u ) \bigr |.\label{eq:tensor_net21}
  \#
\end{lemma}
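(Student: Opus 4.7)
\textbf{Proof Plan for Lemma~\ref{lem:net}.}

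The plan is to adapt the standard matrix $\epsilon$-net argument via an $\ell$-fold telescoping decomposition, with an extra layer of care in constructing the sparse net so that the sparsity level $r$ is preserved through the telescoping. I will treat the general (non-symmetric) bound \eqref{eq:tensor_net1} first, then indicate how \eqref{eq:tensor_net2} requires a mild structural refinement of the net, and finally reduce \eqref{eq:tensor_net11}--\eqref{eq:tensor_net21} to the same identity specialised to a single vector.

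\emph{Step 1: Telescoping identity.} Fix any unit vectors $u^{(1)},\dots,u^{(\ell)}\in\mathbb{S}^{d-1}$ (approximately) achieving the supremum defining $\|A\|_{\oper}$. Pick $v^{(j)}\in\cN_{\cS}(\epsilon)$ with $\|u^{(j)}-v^{(j)}\|_2\leq \epsilon$. I will use the algebraic identity
\[
A\bigl(u^{(1)},\dots,u^{(\ell)}\bigr)-A\bigl(v^{(1)},\dots,v^{(\ell)}\bigr)=\sum_{j=1}^{\ell} A\bigl(v^{(1)},\dots,v^{(j-1)},\,u^{(j)}-v^{(j)},\,u^{(j+1)},\dots,u^{(\ell)}\bigr),
\]
which follows from multilinearity by inserting/removing one coordinate at a time.

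\emph{Step 2: Bounding each telescoped piece.} Each summand has the form $A(w^{(1)},\dots,w^{(\ell)})$ where one slot holds $u^{(j)}-v^{(j)}$ with $\|u^{(j)}-v^{(j)}\|_2\leq\epsilon$ and the remaining slots hold unit vectors. Factoring out the norms and applying the definition of $\|A\|_{\oper}$ gives each summand a bound of $\epsilon\,\|A\|_{\oper}$, hence
\[
\bigl|A(u^{(1)},\dots,u^{(\ell)})\bigr|\leq \bigl|A(v^{(1)},\dots,v^{(\ell)})\bigr|+\ell\,\epsilon\,\|A\|_{\oper}.
\]
Taking the supremum over $u^{(1)},\dots,u^{(\ell)}\in\mathbb{S}^{d-1}$ and rearranging (which is permissible since $\ell\epsilon<1$) yields \eqref{eq:tensor_net1}.

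\emph{Step 3: The sparse case.} The obstacle here, which is the main subtlety of the proof, is that a naive net element $v^{(j)}\in\cN_{\cS}(\epsilon,r)\subseteq\mathbb{S}^{d-1}(r)$ approximating $u^{(j)}\in\mathbb{S}^{d-1}(r)$ only guarantees $\|u^{(j)}-v^{(j)}\|_0\leq 2r$, which would degrade the bound to $\|A\|_{\oper,2r}$ rather than $\|A\|_{\oper,r}$ in Step 2. To circumvent this, I will construct $\cN_{\cS}(\epsilon,r)$ as the union, over all subsets $T\subseteq[d]$ with $|T|\leq r$, of an $\epsilon$-net of the unit sphere of $\RR^T$ (each such net having cardinality at most $(1+2/\epsilon)^r$ by a volumetric argument). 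With this construction, for every $u\in\mathbb{S}^{d-1}(r)$ one can pick $v\in\cN_{\cS}(\epsilon,r)$ with $\supp(v)\subseteq\supp(u)$ and $\|u-v\|_2\leq\epsilon$; in particular $\|u-v\|_0\leq r$. Then every slot in each telescoped summand of Step 1 carries a vector of sparsity at most $r$, so the per-summand bound becomes $\epsilon\,\|A\|_{\oper,r}$ and the rearrangement produces \eqref{eq:tensor_net2}.

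\emph{Step 4: Symmetric case.} For symmetric $A$, I will use the known identity $\|A\|_{\oper}=\sup_{u\in\mathbb{S}^{d-1}}|A(u,\dots,u)|$ (and its sparse analogue). Repeating the telescoping of Step 1 with $u^{(1)}=\cdots=u^{(\ell)}=u$ and $v^{(1)}=\cdots=v^{(\ell)}=v$ gives the identical bound with $\ell\epsilon$, yielding \eqref{eq:tensor_net11} and, using the support-preserving net of Step 3, \eqref{eq:tensor_net21}. The main obstacle throughout is the sparse-norm preservation in Step 3; once the net is built to respect supports, every other step is a routine multilinear estimate.
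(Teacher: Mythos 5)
Your proof is correct and follows essentially the same telescoping route as the paper's: both decompose $A(u^{(1)},\dots,u^{(\ell)})-A(v^{(1)},\dots,v^{(\ell)})$ into $\ell$ multilinear pieces, each contributing at most $\epsilon\|A\|_{\oper}$, then rearrange using $\ell\epsilon<1$.

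Where you go beyond the paper is Step~3. The paper dispatches the sparse bounds with a one-line ``following the same argument with $\cS^{d-1}$ replaced by $\cS^{d-1}(r)$,'' but, as you observe, this is not automatic: if the net merely guarantees $\|u^{(j)}-v^{(j)}\|_2\leq\epsilon$ with both vectors $r$-sparse but possibly of disjoint support, the normalized difference $\bar\delta^{(j)}=(u^{(j)}-v^{(j)})/\|u^{(j)}-v^{(j)}\|_2$ sitting in one slot of the telescoped term is only $2r$-sparse, so the per-summand bound degrades to $\epsilon\|A\|_{\oper,2r}$. Your fix --- building $\cN_{\cS}(\epsilon,r)$ as a union of $\epsilon$-nets of the unit spheres of $\RR^T$ over all $|T|\leq r$, and choosing $v$ with $\supp(v)\subseteq\supp(u)$ --- keeps every slot $r$-sparse and recovers exactly $\|A\|_{\oper,r}$. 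This is in fact the construction that the paper implicitly assumes when it later invokes the cardinality bound $|\cN_{\cS}(\epsilon,r)|\leq\binom{d}{r}(1+2/\epsilon)^r$, so your refinement is consistent with, and makes explicit a detail that is left tacit in, the original argument. The remaining steps (bounding each telescoped piece, rearranging, and specializing to $u^{(1)}=\cdots=u^{(\ell)}$ in the symmetric case) match the paper.
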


Note that our Lemma \ref{lemma:tensor_net} covers the $\epsilon$-argument  for matrices by setting $\ell = 2$. In this case,  we have 
\$
\| A \|_{\oper} = \sup_{x,y \in \cS^{d} } x^\top A y  \leq (1- 2\epsilon )^{-1} \sup_{x, y \in \cN_{\cS}(\epsilon) } x^\top A y,
\$
which is the Lemma 5.4 in  \cite{vershynin2010introduction}. 

\begin{proof}[Proof of Lemma~\ref{lem:net}]
In the following, we first prove the results for $\| \cdot \|_{\oper}$. Let $A \in \RR^{d\otimes \ell}$ be a $\ell$-th order tensor. 
By the definition of the tensor spectral norm,  there exist  $u^{(1)}, \ldots, u^{(\ell) }\in \cS^{d-1}$ such that 
$
\| A \|_{\oper} = A(u^{(1)} , \ldots, u^{(\ell)} ) .
$
Moreover, for any $j\in [\ell]$ , there exists $v^{(j)} \in \cN_{\cS}(\epsilon) $ such that $ \| u^{(j)}   - v^{(j)} \|_2 \leq \epsilon$. 
In the following, we  prove tbound the difference between $A(u^{(1)} , \ldots, u^{(\ell)} ) $ and  $A(v^{(1) }, \ldots,  v^{(\ell)})$. To simplify the notation, we define  $\delta^{(j)} = u^{(j)} - v ^{(j)}$ by and let $\bar \delta^{(j)} = \delta^{(j)} / \| \delta^{(j)}\|_2$  for all $j\in [\ell]$.
By triangle inequality,   we have 
\#\label{eq:tensor_norm_immediate_term}
& \bigl | A(u^{(1)} , \ldots, u^{(\ell)} ) - A(v^{(1) }, \ldots,  v^{(\ell)}) \bigr | \notag \\
& \quad   \leq \bigl | A(u^{(1)} , \ldots, u^{(\ell)} ) - A(u^{(1) }, \ldots,  u^{(\ell-1)}, v^{(\ell)} ) \bigr |  \notag \\
&\quad\quad\quad   + \bigl |A(u^{(1) }, \ldots,  u^{(\ell-1)}, v^{(\ell)} )  - A(v^{(1) }, \ldots , v^{(\ell)} ) \bigr | .  
\#
For the first term on the right-hand side of  \eqref{eq:tensor_norm_immediate_term}, we have 
\$
& \bigl | A(u^{(1)} , \ldots, u^{(\ell)} ) - A(u^{(1) }, \ldots,  u^{(\ell-1)}, v^{(\ell)} ) \bigr |   =  \bigl |   A(u^{(1) }, \ldots,  u^{(\ell-1)}, \delta ^{(\ell)} ) \bigr | \notag \\
 &\quad   = \bigl |   A(u^{(1) }, \ldots,  u^{(\ell-1)}, \bar \delta ^{(\ell)} ) \bigr |  \cdot \| \delta ^{(\ell)}\|_2 \leq \| \delta^{(\ell)}  \|_2   \cdot \|A \|_{\oper},
\$
where the last inequality follows from the the definitions of $\| A \|_{\oper} $. Similarly, for the second term, we have 
\$
&  \bigl |A(u^{(1) }, \ldots,  u^{(\ell-1)}, v^{(\ell)} )  - A(v^{(1) }, \ldots , v^{(\ell)} ) \bigr | \notag \\
 &\quad  \leq  \bigl |A(u^{(1) }, \ldots,  u^{(\ell-1)}, v^{(\ell)} )  - A(u^{(1) }, \ldots , u^{(\ell-2)}, v^{(\ell-1)},  v^{(\ell)} ) \bigr | \notag \\
 &\quad \quad+ \bigl |  A(u ^{(1) }, \ldots , u^{(\ell-2)}, v^{(\ell-1)},  v^{(\ell)} )  - A(v^{(1) }, \ldots , v^{(\ell)} ) \bigr | \notag \\
 &\quad \leq\| \delta^{(\ell-1)}  \|_2  \cdot \| A \|_{\oper} +  \bigl |  A(u ^{(1) }, \ldots , u^{(\ell-2)}, v^{(\ell-1)},  v^{(\ell)} )  - A(v^{(1) }, \ldots , v^{(\ell)} ) \bigr |.
\$
Continuing the same argument, we finally obtain that 
\#\label{eq:some_meaningless1}
& \bigl | A(u^{(1)} , \ldots, u^{(\ell)} ) - A(v^{(1) }, \ldots,    v^{(\ell)} ) \bigr |  \notag \\
& \quad  \leq \bigl ( \| \delta ^{(1)} \|_2+ \ldots + \| \delta ^{(\ell)} \|_2 \bigr ) \cdot \| A \|_{\oper} \leq \ell \cdot   \epsilon \cdot  \| A \|_{\oper},
\#
where the last inequality follows from the definition of $\cN_{\cS}(\epsilon)$.   Thus,  by triangle inequality,
we have 
\$
 | A(v^{(1) }, \ldots,    v^{(\ell)} )|  
 &\geq \bigl | A(u^{(1)} , \ldots, u^{(\ell)} ) \bigr | - \bigl | A(u^{(1)} , \ldots, u^{(\ell)} ) - A(v^{(1) }, \ldots,    v^{(\ell)} ) \bigr | \notag\\ 
 &  \geq (1 - \ell \cdot \epsilon) \cdot \| A \|_{\oper},
\$
which proves \eqref{eq:tensor_net1}. Moreover, when $A$ is a symmetric tensor, we could choose $u^{(1)} = u^{(2)} = \ldots = u^{(\ell)} = u \in \cS^{d-1} $ and $ v^{(1)} = v^{(2)} = \ldots = v^{(\ell)} = v \in \cN_{\cS}(\epsilon)$ such that 
$
\| A \|_{\oper} = A(u, \ldots, u) 
$ 
and $\| u - v \|_2 \leq \epsilon$. Then by \eqref{eq:some_meaningless1} we have 
\$
| A (v, \ldots, v) | &  \geq \bigl  | A(u, \ldots, u) \bigr | - \bigl |  A(u, \ldots, u)  - A (v, \ldots, v) \bigr | \notag \\
& \geq \bigl (  1 - \ell\cdot \| u - v\|_2 \bigr ) \cdot \| A \|_{\oper} \geq (1 - \ell \cdot \epsilon) \cdot \| A \|_{\oper},
\$
which concludes \eqref{eq:tensor_net11}. Following the same argument with $\cS^{d-1}$ and $\cN_{\cS}(\epsilon)$ replaced by $\cS^{d-1}(r)$ and $\cN_{\cS}(\epsilon, r)$, respectively, we also have \eqref{eq:tensor_net2} and \eqref{eq:tensor_net21}. Thus, we conclude the proof.
\end{proof}

\subsection{Proof of Theorem \ref{thm:tensor_concentration}}\label{sec:proof_thm1}

In this section, we present the   proof of Theorem \ref{thm:tensor_concentration}.
  Since the results for $\hat M_1$ and $\hat M_2$    are established using similar techniques, in the following, we first present a detailed proof of the bound on     $\| \hat M_2 - \EE [ h_2(\mathcal{Z}) \cdot S_3(X)]\|_{\oper}$, then prove the other part by showing the differences.

\vspace{10pt}
{\noindent \bf Upper bound for  $\| \hat M_2 - \EE [ h_2(\mathcal{Z}) \cdot S_3(X)]\|_{\oper}$.}
First note that by the definition of third order score function $S_3$ in \eqref{eq:3rd_score},   we have
$
\hat M_2 = \hat T_1+ \hat T_2 ,
$
where $\hat M_2$ is given in \eqref{eq:moment_tensor2}, and we define $\hat T_1$ and $\hat T_2$ respectively  by
\#
  \hat T_1 & = \frac{1}{n} \sum_{i=1}^n Y^{(i)}  \cdot X^{(i)} \otimes X^{(i)} \otimes X^{(i)}, \label{eq:def_T1} \\
   \hat T_2 &  = \frac{1}{n} \sum_{i=1}^n \sum_{j=1}^d Y^{(i)}  \cdot \bigl  ( X^{(i)} \otimes e_j \otimes e_j +e_j \otimes X^{(i)}\otimes e_j + e_j \otimes e_j \otimes X^{(i)} \bigr ) \label{eq:def_T2},
\#
where    $Y^{(i) } = h_2( \cZ^{(i)} )  $ is the response of of the mixture of SIMs.
Note that we have
\$
\EE ( \hat M_2) = \EE[ h_2 (\cZ) \cdot    S_3(X) ]  = \sum_{j=1}^k \gamma _j^* \cdot \theta_j ^{* \otimes 3},
\$
where the second equality follows from Lemma \ref{lemma:moments}.   By the  triangle inequality, we have
\#\label{eq:use_triangle}
\bigl \| \hat M_2 - \EE (\hat M_2)  \bigr \|_{\oper}  \leq \bigl \| \hat T_1 - \EE (\hat T_1)  \bigr \|_{\oper} + \bigl \| \hat T_2- \EE (\hat T_2)  \bigr \|_{\oper}.
\#
In the sequel, we upper bound the two terms on the right-hand of \eqref{eq:use_triangle} separately.

 First, since $\hat T_1$ and $\hat T_2$ defined in \eqref{eq:def_T1} and \eqref{eq:def_T2} are symmetric tensors, the definition of tensor operator norm  in \eqref{eq:def_tensor_opernorm} yields that
\#\label{eq:T1_oper_norm_def}
\bigl \| \hat T_1 - \EE ( \hat T_1) \bigr \| _{\oper} =\sup_{u  \in \cS^{d-1}} \Bigl \{ \Bigl | \hat T_1 (u, u, u) - \EE \bigl [  \hat T_1(u,u,u) \bigr ]  \Bigr | \Bigr \},
\#
where $\cS^{d-1} = \{ u \in \RR^d \colon \| u \|_2 = 1 \}$ is the unit sphere in $\RR^d$. Similar to showing the concentration of random matrices, our derivation consists of two steps. In the first step, we  firstly apply the $\epsilon$-net argument  for tensor operator norm.  Then we bound  the concentration of $| \hat T_1(u, u, u) - \EE [ \hat T_1(u,u,u)] |$ for any fixed $u \in \cS^{d-1}$ and apply a  union bound over the $\epsilon$-net.

 To begin with, let $N_{\cS} (\epsilon )$ be the $\epsilon$-net of the unit sphere $\cS^{d-1}$ for any $\epsilon \in (0,1)$. We  now
 %
%
%
 appeal to Lemma~\ref{lem:net}, which shows that  taking  the supremum in \eqref{eq:T1_oper_norm_def} over  $N_{\cS} (1/6)$ instead of $\cS^{d-1}$ only incurs a small error. Specifically, we apply Lemma~\ref{lem:net} with $\ell = 3$, and $\epsilon = 1/6$ to $\hat T_1$ to obtain
\#\label{eq:use_epsilon_argument}
\bigl \| \hat T_1 - \EE ( \hat T_1) \bigr \| _{\oper}  \leq 2 \sup_{u  \in \cN_{\cS}(1/6) } \Bigl \{ \Bigl | \hat T_1 (u, u, u) - \EE \bigl [  \hat T_1(u,u,u) \bigr ]  \Bigr | \Bigr \}.
\#

In the next step, we derive a concentration inequality for  $\hat T_1(u,u,u)$ with fixed  $u \in \cS^{d-1}$, and apply a union bound of $\cN_{\cS}(1/6)$ to conclude the proof.

  By the definition of $\hat T_1$ in \eqref{eq:def_T1}, we have
 we have
\$
\hat T_1(u,u,u) = \frac{1}{n} \sum_{i=1}^n Y^{(i)} \cdot ( X^{(i) \top } u)^3.
\$
By Assumption \ref{assume:moments}, $\{ Y^{(i)} \}_{i\in [n]}$ are i.i.d.   sub-exponential random variables with $\psi_1$-norm bounded by $\Psi$. Moreover, since $\{X^{(i)} \}_{i\in [n]} $ are i.i.d. $ N(0, I_d)$ random vectors, $X^{(i) \top } u$ are independent standard Gaussian random variables for each $u \in \cS^{d-1}$. Thus $\| X^{(i)\top } u \|_{\psi_2} \leq \Upsilon_0$ where $\Upsilon_0 $ is a constant.
To bound $| \hat T_1(u,u,u) - \EE [  \hat T_1 (u,u,u) ] |$, we apply \eqref{eq:third_order_concentration} in Lemma \ref{lemma:sum_sub_exponential} to obtain that
\#\label{eq:apply_third_order}
&\PP \Bigl \{  \Bigl |  \hat T_1(u,u,u) - \EE  \bigl [ \hat T_1 (u, u, u) \bigr ] \Bigr | \geq t  / n\Bigr \} \notag \\
&\quad   \leq 4  \exp \Bigl (  - K_1 \cdot \min \big\{    [ t/ (\sqrt{n} \Upsilon_0^3 \cdot \Psi   )  ]^{2},    [ t / ( \Upsilon_0^3 \cdot \Psi )  ] ^{2/5} \bigr\}  \Bigr ),
\#
where $K_1$ is a constant. Moreover, this inequality holds for any $t \geq C_1 \cdot \Upsilon_0^3 \cdot \Psi$, where $C_1$ is a constant. Since both $\Psi$ and $\Upsilon_0$ are constants, we can rewrite \eqref{eq:apply_third_order} as
\#\label{eq:apply2_third_order}
& \PP \Bigl \{  \Bigl |  \hat T_1(u,u,u) - \EE  \bigl [ \hat T_1 (u, u, u) \bigr ] \Bigr | \geq t  / n\Bigr \} \notag \\
& \quad \leq 4  \exp  \bigl [  - \tilde C \cdot \min \bigl (     t^2 / n,     t  ^{2/5}  \bigr )   \bigr ]  ,
\#
where $\tilde C$ is a constant depending on $\Upsilon_0$ and $\Psi$.  Based on \eqref{eq:apply2_third_order}, we take a union bound over $\cN_{\cS}( 1/6)$.
As shown in    Lemma 5.2 in \cite{vershynin2010introduction}, the capacity of  $\cN_{\cS} (\epsilon) $ is bounded by $(1 + 2/ \epsilon)^{d}$ for any $\epsilon \geq 0$.  Thus we have  $| \cN_{\cS}(1/6) | \leq 13^d$, which implies that
\#\label{eq:union_1}
&\PP \biggl ( \sup_{u \in \cN_{\cS}(1/6)}  \Bigl \{   \Bigl |  \hat T_1(u,u,u) - \EE  \bigl [ \hat T_1 (u, u, u) \bigr ] \Bigr |  \Bigr \}  \geq t  / n \biggr ) \notag \\
&\quad  \leq 4  \exp  \bigl [  - \tilde C \cdot \min \bigl (     t^2 / n,     t  ^{2/5}  \bigr ) + \log 13 \cdot d   \bigr ]  ,
\#
Now we set $  \tilde C \cdot \min  (     t^2 / n,     t  ^{2/5}   ) = 6 d$. Note that $\tilde C$ is a constant. Solving the equation for $t$ yields~that
\#\label{eq:get_t}
t = K_{ 1} / 2 \cdot \max \bigl ( \sqrt{ nd}, d^{5/2} \bigr )
\#
for some constant $K_{ 1}$ depending on $\Psi$ and $\Upsilon_0$. Note that \eqref{eq:apply_third_order} holds for $t \geq C_1 \cdot \Upsilon_0^3 \cdot \Psi$. Thus, we require that 
$
 nd \geq ( 2 C_1 / K_1 )^2 \cdot  \Upsilon_0^6 \cdot  \Psi^2. 
$  In this case, $t$ defined in \eqref{eq:get_t} satisfies~\eqref{eq:union_1}.

Finally, combining \eqref{eq:use_epsilon_argument}, \eqref{eq:union_1}, \eqref{eq:get_t}, we conclude that
\#\label{eq:final_t1_bound}
\bigl \| \hat T_1 - \EE ( \hat T_1) \bigr \|_{\oper} \leq K_{ 1} \cdot \max \bigl ( \sqrt{d/n}, d^{5/2} / n \bigr )
\#
holds with probability at least $1 - 4 \exp( -3d)$.

Moreover, for $\hat T_2 $ defined \eqref{eq:def_T2} and any $u \in \cS^{d-1} $, we have
\$
\hat T_2 ( u, u, u) &= \frac{3}{n} \sum_{i=1}^n Y^{(i)} \cdot \sum_{j=1}^d (X^{(i)\top} u)  \cdot (u^\top e_j)^2 \\ &  =  \frac{3}{n} \sum_{i=1}^n Y^{(i)} \cdot (X^{(i)\top}u  )
 \cdot \sum_{j=1}^d  u_j^2    = \frac{3}{n} \sum_{i=1}^n Y^{(i)} \cdot (X^{(i)\top}u  ),
  \$
  where the last equality holds since $\| u \|_2 = 1$.
Note that  $Y^{(i)}$ and $(X^{(i)\top}u  )$ are sub-exponential  and sub-Gaussian random variables, respectively, with $\|X^{(i)\top}u\|_{\psi_2} \leq \Upsilon_0$ and $\| Y \|_{\psi_1} \leq \Psi$.  By  \eqref{eq:third_order_concentration2} in Lemma \ref{lemma:sum_sub_exponential}, we have
\#\label{eq:apply_third_order2}
& \PP \Bigl \{  \Bigl |  \hat T_2(u,u,u) - \EE  \bigl [ \hat T_2 (u, u, u) \bigr ] \Bigr | \geq t  / n\Bigr \} \notag \\
& \quad \leq  4  \exp \Bigl (  - K_2 \cdot \min \big\{    [ t/ (\sqrt{n} \Upsilon_0  \cdot \Psi   )  ]^{2},    [ t / ( \Upsilon_0  \cdot \Psi )  ] ^{2/3} \bigr\}  \Bigr ),
\#
where $K_2 >0 $ is a constant.  Note that \eqref{eq:apply_third_order2} holds for any $t\geq C_1 \cdot \Upsilon_0   \Psi$. Then we take a union bound for all $u \in \cN_{\cS} (1/6)$ in \eqref{eq:apply_third_order2} to obtain that
\#\label{eq:union_2}
& \PP \biggl ( \sup_{u \in \cN_{\cS}(1/6)}  \Bigl \{   \Bigl |  \hat T_2(u,u,u) - \EE  \bigl [ \hat T_2 (u, u, u) \bigr ] \Bigr |  \Bigr \}   \geq t  / n \biggr ) \notag 
\\
& \quad \leq 4  \exp  \bigl [  - \tilde C \cdot \min \bigl (     t^2 / n,     t  ^{2/3}  \bigr ) + \log 13 \cdot d   \bigr ]  ,
\#
where $\tilde C$ is a constant depending on $\Upsilon_0$ and $\Psi$. Similar to \eqref{eq:get_t}, setting $\tilde C \cdot \min  (     t^2 / n,     t  ^{2/3}   ) = 6 d$ implies that
\#\label{eq:get_t2}
t = K_{2} / 2 \cdot \max \bigl ( \sqrt{ nd}, d^{3/2} \bigr )
\#
for some constant $K_{ 2}$. When $nd \geq ( 2C_1 / K_2 )^2 \cdot \Upsilon_0 ^2 \cdot   \Psi^2 $, $t$ defined in \eqref{eq:get_t2} satisfies that 
$t \geq C_1 \cdot \Upsilon_0 \Psi$, which implies that \eqref{eq:union_2} holds for such a $t$.
Finally, combining \eqref{eq:union_2} and \eqref{eq:get_t2}, we obtain
\#\label{eq:final_t2_bound}
\bigl \| \hat T_2 - \EE ( \hat T_2) \bigr \|_{\oper} \leq K_{2} \cdot \max \bigl ( \sqrt{d/n}, d^{3/2} / n \bigr )
\#
with probability at least $1 - 4 \exp( -3d)$. Then   combining \eqref{eq:final_t1_bound} and \eqref{eq:final_t2_bound},  since $d^{5/2}  > d^{3/2}$, we conclude that
\#\label{eq:final_spectral_bound}
\bigl \| \hat M_2 - \EE[ h_2 (\cZ) \cdot    S_3(X) ] \bigr \|_{\oper}  \leq K \cdot \max \bigl ( \sqrt{d/n}, d^{5/2} / n \bigr )
\#
holds with   with probability at least $1 -8 \exp(-3d)$, where the constant $K$ in \eqref{eq:final_spectral_bound} can be chosen to be $K_1 + K _2$.
  When $d $ is sufficiently large such that $\exp(d) \geq 8$, we conclude that \eqref{eq:final_spectral_bound} holds with probability at least $1 - \exp(-2d)$. Recall that we require $n d \geq C \cdot \max \{ \Upsilon_1^6, \Upsilon_1^2 \} \cdot \Psi^2 $ for some constant $C >0$, which holds when $n$ and $d$ are sufficiently large.

\vspace{10pt}
{\noindent \bf Upper bound for  $\| \hat M_1 - \EE [ h_1(\mathcal{Z}) \cdot S_3(X)]\|_{\oper}$.}
 To conclude the proof, it remains to bound  $\| \hat M_1 - \EE [ h_1(\mathcal{Z}) \cdot S_3(X)]\|_{\oper}$.  For notational simplicity,   we denote  $ h_1(\cZ^{(i)} ) = k^{-1} \sum_{j\in [k]} Z_j^{(i)} $ by $W^{(i) } $ for any $i \in [n]$.  Then by \eqref{eq:moment_tensor1},  we can write  $\hat M_1 = \sum_{i =1}^n W^{(i) } \cdot S_3(X^{(i)}) = \hat T_3  +\hat T_4$, where we define $\hat T_3$ and $\hat T_4$ respectively~by
\$
& \hat T_3 = \frac{1}{n} \sum_{i=1}^n W^{(i)}  \cdot X^{(i)} \otimes X^{(i)} \otimes X^{(i)},  \\
& \hat T_4 = \frac{1}{n} \sum_{i=1}^n \sum_{j=1}^d W^{(i)}  \cdot \bigl  ( X^{(i)} \otimes e_j \otimes e_j +e_j \otimes X^{(i)}\otimes e_j + e_j \otimes e_j \otimes X^{(i)} \bigr ) .
\$
We note that here the $\hat T_3$ and $\hat T_4$ is defined in the same fashion as $\hat T_1$ and $\hat T_2$ defined in   \eqref{eq:def_T1} and \eqref{eq:def_T2} with $\{Y^{(i)} \}_{i\in [n]}$ replaced by $\{W^{(i)} \}_{i\in [n]}$.
Then triangle inequality implies that
\#\label{eq:use_triangle2}
\bigl \| \hat M_1 -  \EE [ h_1(\mathcal{Z}) \cdot S_3(X)]  \bigr \|_{\oper}  \leq \bigl \| \hat T_3 - \EE (\hat T_3)  \bigr \|_{\oper} + \bigl \| \hat T_4- \EE (\hat T_4)  \bigr \|_{\oper}.
\#
Moreover, by Assumption \ref{assume:moments}, we have $\| Z_j \| _{\psi_1} \leq \Psi$ for each $j\in [k]$.  We   appeal to  Lemma \ref{lemma:psi_1_norm} to obtain  that $\| h_1(\cZ) \|_{\psi_1} \leq \Psi$, which implies that
$\| W^{(i) } \| _{\psi_1} \leq \Psi$ for any $i \in [n]$.
Thus, following the same derivation for \eqref{eq:final_spectral_bound} with $Y^{(i)}$ replaced by $W^{(i)}$ for all $i\in [n]$, we obtain that
\#\label{eq:final_spectral_bound2}
\bigl \| \hat M_1 - \EE[ h_1 (\cZ) \cdot    S_3(X) ] \bigr \|_{\oper}  \leq K  \cdot \max \bigl ( \sqrt{d/n}, d^{5/2} / n \bigr ).
\#
with probability at least $1- \exp(-2d)$. Here the   constant  $K $ in \eqref{eq:final_spectral_bound2} can be set as  $K_1 + K_2$, where $K_1$ and  $K_2$ are given in \eqref{eq:get_t} and \eqref{eq:get_t2}, respectively.
Finally, combining \eqref{eq:final_spectral_bound} and \eqref{eq:final_spectral_bound2}, we conclude the proof of Theorem \ref{thm:tensor_concentration}.


\subsection{Proof of Theorem \ref{thm:tensor_concentration1}}\label{sec:proof_thm2}


The proof of Theorem \ref{thm:tensor_concentration1}    is similar to that of Theorem~\ref{thm:tensor_concentration}. 
Recall that we define $W^{(i)} = k^{-1} \sum_{j\in [k]} Z_j^{(i)}$, whose $\psi_1$-norm is bounded by $\Psi$.  Using the similar argument as in Theorem~\ref{thm:tensor_concentration}, we only need to consider $\hat M_2 $; the result for $\hat M_1$ follows similarly by replacing $Y^{(i)}$ by $W^{(i)}$ for all $i\in [n]$. 

Moreover, recall that we define  $\hat T_1$ and $\hat T_2$ in \eqref{eq:def_T1} and \eqref{eq:def_T2}, respectively, which satisfy $\hat M_2= \hat T_1 + \hat T_2$.
 For any $r \in [d]$, by the definition of $\| \cdot \|_{\oper, r}$ in \eqref{eq:def_sparse_tensor_opernorm} and the triangle inequality, we have  
\#\label{eq:use_triangle2}
\bigl \| \hat T - \EE (\hat T)  \bigr \|_{\oper, r}  \leq \bigl \| \hat T_1 - \EE (\hat T_1)  \bigr \|_{\oper,r } + \bigl \| \hat T_2- \EE (\hat T_2)  \bigr \|_{\oper,r }.
\#
In the sequel, we bound the two terms on the right-hand side of \eqref{eq:use_triangle2} separately. 

We first bound $ \| \hat T_1 - \EE (\hat T_1)   \|_{\oper,r }$.   
Let $\cN_{\cS}(\epsilon, r)$ be the $\epsilon$-net of $\cS^{d-1}(r) = \{ u \in \cS^{d-1} \colon \| u \|_0 \leq r\}$. 
We apply the $\epsilon$-net argument for $\| \cdot \|_{\oper, r}$. By Lemma~\ref{lemma:tensor_net} with $\ell = 3$ and $\epsilon= 1/6$, we have 
\#\label{eq:use_epsilon_argument2}
\bigl \| \hat T_1 - \EE ( \hat T_1) \bigr \| _{\oper,r }  \leq 2 \sup_{u  \in \cN_{\cS}(1/6,r ) } \Bigl \{ \Bigl | \hat T_1 (u, u, u) - \EE \bigl [  \hat T_1(u,u,u) \bigr ]  \Bigr | \Bigr \}.
\#
Note that for any $u \in \cS^{d-1}$, \eqref{eq:apply2_third_order} gives an upper bound of the tail probability 
\$
\PP \Bigl \{  \Bigl |  \hat T_1(u,u,u) - \EE  \bigl [ \hat T_1 (u, u, u) \bigr ] \Bigr | \geq t  / n\Bigr \}.
\$
Based on \eqref{eq:apply2_third_order} , we take a union bound over 
$\cN_{\cS}( 1/6,r)$. The cardinality of $\cN_{\cS}(1/6, r)$ satisfies 
\#\label{eq:cardinarlity_sparse_net}
\bigl | \cN_{\cS}(1/6, r) \bigr |  \leq {d \choose r} \cdot 13^s \leq (13 ed /r)^{r}. 
\#
Combining \eqref{eq:apply2_third_order} and \eqref{eq:cardinarlity_sparse_net}, we have
\#\label{eq:union_11}
&\PP \biggl ( \sup_{u \in \cN_{\cS}(1/6, r)}  \Bigl \{   \Bigl |  \hat T_1(u,u,u) - \EE  \bigl [ \hat T_1 (u, u, u) \bigr ] \Bigr |  \Bigr \}\geq t  / n \biggr ) \notag \\
& \quad\qquad\qquad \leq 4  \exp  \bigl [  - \tilde C \cdot \min \bigl (     t^2 / n,     t  ^{2/5}  \bigr ) + r \cdot  \log (13 ed /r)   \bigr ]  ,
\#
where $\tilde C$ is a constant.
Setting  $  \tilde C \cdot \min  (     t^2 / n,     t  ^{2/5}   ) = 7r \cdot \log (d/r)$. Note that $\tilde C$ is a constant. Solving the equation for $t$ yields~that 
\#\label{eq:get_t12}
t = K_{3} / 2 \cdot \max \bigl \{ \sqrt{ r n\cdot \log (d/r) }, [ r\cdot \log (d/r)] ^{5/2} \bigr \}
\# 
for some constant $K_{3}$ depending on $\Psi$ and $\Upsilon_0$.
Finally, combining \eqref{eq:use_epsilon_argument2}, \eqref{eq:union_11}, \eqref{eq:get_t12}, we conclude that 
\#\label{eq:final_t1_bound2}
\bigl \| \hat T_1 - \EE ( \hat T_1) \bigr \|_{\oper, r } \leq K_{3} \cdot \max \biggl \{ \biggl [ \frac{r \log (d/r )}{n} \biggr ]^{1/2}, \frac{ [  r \log (d/r) ]^{5/2}} {n} \biggr \} 
\#
with probability at least $1 - 4 \exp[  -3r \cdot \log(d/r) ]$.

It remains to bound  $\| \hat T_2  - \EE ( \hat T_2) \|_{\oper, r}$. Following the similar argument, by taking an union bound over $\cN_{\cS}(\epsilon, r)$  using the concentration inequality in \eqref{eq:apply_third_order}, we have
\#\label{eq:union_22}
& \PP \biggl ( \sup_{u \in \cN_{\cS}(1/6)}  \Bigl \{   \Bigl |  \hat T_2(u,u,u) - \EE  \bigl [ \hat T_2 (u, u, u) \bigr ] \Bigr |  \Bigr \}\geq t  / n \biggr ) \notag \\
&\qquad \qquad\qquad\leq 4  \exp  \bigl [  - \tilde C \cdot \min \bigl (     t^2 / n,     t  ^{2/3}  \bigr ) +  r \cdot  \log (13 ed /r)    \bigr ]  ,
\#
Now we set $\tilde C \cdot \min  (     t^2 / n,     t  ^{2/3}   ) = 7r \cdot \log (d/r)$ in \eqref{eq:union_22},  which implies that 
\#\label{eq:get_t22}
t = K_{4} / 2 \cdot \max \bigl ( \sqrt{ nd}, d^{3/2} \bigr ) 
\#
for some constant $K_{4}$. Finally, combining \eqref{eq:union_22} and \eqref{eq:get_t22}, we obtain that 
\#\label{eq:final_t2_bound2}
\bigl \| \hat T_2 - \EE ( \hat T_2) \bigr \|_{\oper, r} \leq K_{4} \cdot  \max \biggl \{ \biggl [ \frac{r \log (d/r )}{n} \biggr ]^{1/2}, \frac{ [  r \log (d/r) ]^{3/2}} {n} \biggr \} 
\#
with probability at least $1 - 4\exp[  -3r \cdot \log(d/r) ]$.

Moreover, note that  $r \log (d/r)  \leq \log d$ and  $r \log d \geq 1$.
Therefore, combining \eqref{eq:final_t1_bound2}  and \eqref{eq:final_t2_bound2}, we conclude that, with probability at least 
$1 - 8\exp[  -3r \cdot \log d]$, we have 
\#\label{eq:M2_bound_sparse_final}
 \bigl \| \hat M_2 - \EE[ h_1 (\cZ) \cdot    S_3(X) ] \bigr \|_{\oper, r}  &\leq   K \cdot \max \biggl \{ \biggl ( \frac{r \log d }{n} \biggr )^{1/2}, \frac{ ( r \log d  )^{5/2}} {n} \biggr \}  ,
\#
where $K  = K_3 + K_4$.
Thus we conclude the proof for $\hat M_2$. Finally, we recall that the bound on $\| \hat M_1 - \EE[ h_1 (\cZ) \cdot    S_3(X) ]  \|_{\oper, r}$ can be derived in the similar  fashion as \eqref{eq:M2_bound_sparse_final} by replacing $Y^{(i)}$ by $W^{(i)}$ for each $i \in [n]$. Therefore, we conclude the proof of  Theorem \ref{thm:tensor_concentration1}.


\bibliographystyle{plainnat}
\bibliography{overcompnn}

\end{document}